\documentclass[pdflatex,sn-mathphys-num]{sn-jnl}
\usepackage[utf8]{inputenc}
\usepackage{textcomp}
\DeclareUnicodeCharacter{2212}{\textminus}

\usepackage{mathrsfs}
\usepackage{graphicx}
\usepackage{multirow}
\usepackage{amsmath,amssymb,amsfonts}%
\usepackage{amsthm}
\usepackage[title]{appendix}%
\usepackage{xcolor}
\usepackage{textcomp}
\usepackage{manyfoot}
\usepackage{booktabs}
\usepackage{algorithm}
\usepackage{algorithmicx}%
\usepackage{algpseudocode}%
\usepackage{mathtools}
\usepackage{listings}%

\theoremstyle{thmstyleone}%
\newtheorem{theorem}{Theorem}%
\newtheorem{proposition}[theorem]{Proposition}%
\newtheorem{lemma}[theorem]{Lemma}%
\newtheorem{corollary}[theorem]{Corollary}%
\newtheorem{boundarylemma}{Lemma}
 
\newtheorem{starproposition}{Proposition}

\theoremstyle{thmstyletwo}%
\newtheorem{remark}{Remark}%
\newtheorem{notation}{Notation}
\theoremstyle{thmstylethree}%
\newtheorem{definition}{Definition}%

\begin{document}

\title[Integration by Parts Formula and Sobolev Embeddings on Vector Bundles: An Intrinsic Geometrical Global Approach]{Integration by Parts Formula and Sobolev Embeddings on Vector Bundles: An Intrinsic Geometrical Global Approach}

\author[1]{\fnm{Carlos D.} \sur{Velázquez-Mendoza}}\email{danielc21@ciencias.unam.mx}
\equalcont{These authors contributed equally to this work.}

\author*[1]{\fnm{María de los Ángeles} \sur{Sandoval-Romero}}\email{selegna@ciencias.unam.mx}
\equalcont{These authors contributed equally to this work.}

\affil*[1]{\orgdiv{Department of Mathematics}, \orgname{Facultad de Ciencias, UNAM}, \orgaddress{\street{Circuito Exterior s/n}, \city{México City}, \postcode{04510}, \state{CDMX}, \country{México}}}


\abstract{
This article develops a unified and intrinsic framework for the theory of Sobolev spaces on vector bundles over Riemannian manifolds. The analytical core of our approach is an explicit higher-order geometric integration by parts formula, which characterizes the formal adjoint of the covariant derivative as a global differential operator. This identity is established on arbitrary Riemannian manifolds with boundary, without assuming completeness or compactness.

While first-order integration by parts identities are classical, explicit higher-order formulas with precise boundary terms are rarely stated in the literature. As applications of this framework, we recover the classical Meyers--Serrin theorem on arbitrary manifolds and, in the compact case, the Sobolev embedding and Rellich--Kondrashov compactness theorems, providing direct and self-contained proofs. At the end of this work, we also establish Green's formula and use it to establish norm equivalence in Sobolev spaces on vector bundles over closed manifolds for the Bochner Laplacian. As a corollary, we recover the classical norm equivalence for the Laplace--Beltrami operator on closed manifolds and provide a complete proof which, to the best of our knowledge, does not appear explicitly in the standard literature.

By emphasizing intrinsic global arguments and sharp local-to-global norm equivalence estimates, rather than ad hoc coordinate patching, this work offers a transparent and accessible foundation for the study of Sobolev spaces on vector bundles, suitable for researchers in global analysis, differential geometry, and partial differential equations.
}

\keywords{Sobolev spaces, Vector bundles, Geometric integration by parts, Embedding theorems, Rellich-Kondrashov theorem}
\pacs[MSC Classification]{57R22, 51H25, 46E35, 58A99, 58C99}
\maketitle

\section{Introduction}\label{sec:intro}

Sobolev spaces of sections provide a natural setting for weak formulations and variational problems on manifolds. The unknown may be a function, a differential form, a vector field, or a section of a more general vector bundle. In each case, a connection supplies the derivatives entering the energy and the differential equation. Completion with respect to the corresponding Sobolev norm provides a Banach space in which to take limits, while the definition through weak covariant derivatives makes it possible to interpret the equation by testing and integration by parts. The identity $H^{m,p}(E)=W^{m,p}(E)$ reconciles these constructions. Density then permits the passage from identities for smooth sections to Sobolev sections; continuous embeddings control integrability, and compact embeddings provide strongly convergent subsequences in variational arguments.

These constructions have a substantial history. Aubin~\cite[Definitions~2.2--2.3, p.~32]{Aubin} and Hebey~\cite[Definition~2.1, pp.~21--22]{Hebey1} develop scalar Sobolev spaces using the completion of smooth functions with finite covariant Sobolev norm. Nicolaescu~\cite[Definition~10.2.33 and Theorem~10.2.36]{Nicolaescu2020} treats sections using weak covariant derivatives and records density, invariance, and embedding results on compact manifolds. However, the proof is left as an exercise to the reader without providing further details. In the Euclidean setting, Meyers and Serrin~\cite{MeyersSerrin1964} proved density of smooth functions in $W^{m,p}(\Omega)$ for arbitrary open sets $\Omega\subseteq\mathbb R^n$ and $1\leq p<\infty$. Their local regularization and partition-of-unity argument does not require regularity of $\partial\Omega$. Approximation by functions smooth up to a boundary is a separate issue, for which extension and localization provide the standard tools; see Leoni~\cite[Theorem~11.35, p.~330, and Theorem~13.17, pp.~424--425]{Leoni2017}.

A considerably more general density theorem was established by Guidetti, G\"uneysu, and Pallara~\cite{Guidetti2017}. On a smooth manifold without boundary, they consider Hermitian vector bundles, an arbitrary smooth positive measure $\mu$, and a finite family $\mathfrak P=\{P_1,\dots,P_N\}$ of linear differential operators with possibly different target bundles. Their spaces consist of sections $u$ for which $u$ and every $P_i u$ belong to the corresponding $L^p_\mu$ spaces, equipped with the graph norm. If $k$ is the largest order of the family, their Theorem~2.9 proves smooth density provided that, when $k\geq2$, this graph space is locally contained in $W^{k-1,p}$. The proof uses a higher-order version of Friedrichs' mollifier approximation (Proposition~2.10). Their $L^1$ elliptic regularity theorem on the Besov scale (Theorem~3.1) supplies this local regularity in additional situations, including elliptic systems for which the usual $L^p$ estimates do not extend to $p=1$.

The covariant application in \cite[Lemma~3.5 and Corollary~3.6]{Guidetti2017} is especially relevant here: local comparison with partial derivatives verifies the required regularity for the family of iterated covariant derivatives, without an ellipticity assumption on that family. It applies on arbitrary Riemannian manifolds without boundary, throughout $1\leq p<\infty$, and even to connections that need not preserve the fiber metric. Thus, the covariant density theorem recovered below is already contained in their broader theory when the boundary is empty. Chan and Czubak~\cite{ChanCzubak2024Survey} give a further detailed treatment for functions and differential forms, emphasizing the relation between weak partial and weak covariant derivatives. Our treatment uses compatible connections and derives this relation from an explicit higher-order integration by parts formula.

For compactness and embeddings of bundle-valued sections, Palais remains a foundational reference. His construction starts with Hilbert norms on jet bundles and completion \cite[Chapter~IX, Section~3, pp.~149--150]{Palais1965}, followed by interpolation to obtain a continuous Sobolev scale. Fourier series on the torus identify this scale with weighted sequence spaces, and compactness follows from control of the high-frequency tails \cite[Chapter~X, Section~2, Theorem~1 and its corollary]{Palais1965}. Local trivializations, cutoffs, and extension operators then transfer the torus results to sections of arbitrary bundles on compact manifolds, including manifolds with boundary \cite[Chapter~X, Section~4, Theorem~3]{Palais1965}. This is an $L^2$ construction, distinct from the covariant $W^{m,p}$ definition used here. Its continuing role is apparent in Bleecker and Boo\ss-Bavnbek~\cite[Theorems~7.13--7.15, pp.~200--202]{BleeckerBooss2013}, who explicitly refer to Palais for the passage to general bundles. Their detailed compactness argument in Section~7.3 is for scalar functions with fixed compact support in Euclidean space; their Proposition~16.22, p.~498, also records the general integer-order $L^p$ compact embedding for bundle-valued sections.

Other standard approaches begin with Euclidean inequalities and use coordinate localization. Aubin~\cite[Theorems~2.20 and~2.34]{Aubin} and Hebey~\cite[Theorems~2.6 and~2.9]{Hebey1} carry this out for scalar functions on compact manifolds. Nicolaescu~\cite[Theorem~10.2.36]{Nicolaescu2020} indicates how a partition of unity and the Euclidean results yield the bundle statements. On noncompact manifolds, uniform control of these local constructions is essential; Gro\ss e and Schneider~\cite[Theorems~14 and~44]{GrosseSchneider2013} study equivalent local descriptions for manifolds and bundles of bounded geometry, including fractional orders. These results address broader scales and geometric settings than the integer-order compact embeddings considered below.

The central result of this paper is a global integration by parts formula for iterated covariant derivatives, with every boundary term explicitly identified (Theorem~\ref{thm:green-higher-order}). Its proof uses metric compatibility and the divergence theorem, and requires neither completeness nor compactness of the manifold. In particular, it gives the differential expression
\[
 (\nabla^s)^*=(-1)^s(\operatorname{tr}_g\circ\nabla)^s
\]
for the formal adjoint. Testing against sections supported in the interior makes this expression applicable to weak derivatives on manifolds with or without boundary. The boundary terms retain their separate role when smooth sections meet $\partial M$.

The contribution is the explicit formula and a unified derivation of its consequences. Section~\ref{sec:structural_theorems} obtains the local weak-derivative characterization and both local norm estimates from that formula. With the classical Euclidean results as analytic input, Section~\ref{sec:global_approximation} proves smooth density and gives a detailed proof of geometric invariance on compact manifolds. Section~\ref{sec:embeddings_rellich} derives continuous Sobolev embeddings and Rellich--Kondrashov compactness, including smooth boundary. The localization and approximation needed at the boundary are supplied in Appendix~\ref{app:boundary}. Finally, Section~\ref{sec:applications} derives Green's formula and proves the classical equivalence with norms defined by powers of the Bochner Laplacian on closed manifolds. The latter proof keeps track of both Riemannian and bundle curvature. Appendix~\ref{app:star-properties} contains the tensor-contraction estimates and the Leibniz rule used in that proof.

\section{Notation and Preliminaries}\label{sec:preliminaries}

\subsection{Notation and Conventions}

Throughout this paper, $(M,g)$ denotes a smooth $n$-dimensional Riemannian manifold, possibly with smooth boundary and not necessarily complete or compact. All geometric data are smooth up to the boundary. We write $\operatorname{int}(M)=M\setminus\partial M$ and use ``closed'' to mean compact with empty boundary. The measure $d\lambda_g$ is the Riemannian-Lebesgue measure. We denote by $\mathfrak X(M)=\Gamma(TM)$ the space of smooth vector fields on $M$.

Let $E\to M$ be a smooth real vector bundle of finite rank, equipped with a smooth fiber metric $h_E$ and a compatible connection $\nabla^E$. Following \cite[Chapter~10, pp.~255--256]{LeeS}, $\Gamma(E)$ denotes its smooth sections, smooth up to $\partial M$ when present, and $\Gamma_c(E)$ denotes sections with compact support in $M$. Such support may meet $\partial M$. For weak derivatives we instead use
\[
 \Gamma_c(\operatorname{int}(M);E)
 :=\{u\in\Gamma(E)\mid\operatorname{supp}u\Subset\operatorname{int}(M)\}.
\]
These spaces coincide when $\partial M=\varnothing$. For a Euclidean open set $\Omega$, $C_c^\infty(\Omega)$ has its usual meaning. Sections of the trivial line bundle $M\times\mathbb R$ are identified with scalar functions on $M$.

The Levi--Civita connection on $TM$ and $\nabla^E$ induce a connection on each tensor bundle $T^{(k,l)}(TM)\otimes E$, also denoted by $\nabla$. The induced fiber inner product, denoted by $\langle\cdot,\cdot\rangle_{g,h_E}$, is defined on decomposable tensors by taking the product of the inner products on all factors, using $g^{-1}$ on covectors, and extending bilinearly. In coordinates and a local frame of $E$, it is
\[
\langle F,G\rangle_{g,h_E}
=g_{i_1r_1}\cdots g_{i_kr_k}g^{j_1s_1}\cdots g^{j_ls_l}
h_{ab}F^{i_1\dots i_ka}_{j_1\dots j_l}G^{r_1\dots r_kb}_{s_1\dots s_l}.
\]
Here $h_{ab}=h_E(e_a,e_b)$, and $(h^{ab})$ denotes the inverse matrix. The corresponding pointwise norm is $|\cdot|_{g,h_E}$. For sections of $E$ itself, this inner product reduces to $\langle\cdot,\cdot\rangle_{h_E}$. We write $L^p(E)$ for measurable sections with $p$-integrable pointwise norm, modulo equality almost everywhere, and $L^1_{\mathrm{loc}}(E)$ for local integrability with respect to $d\lambda_g$.

The component formulas are written in real notation. The results also apply to Hermitian bundles with compatible complex connections: applying the real statements to the underlying real bundle with metric $\operatorname{Re}h_E$ gives the same norms and weak derivatives. The complex integration identities follow by testing with $v$ and $iv$; in complex coordinates the Hermitian pairings include complex conjugation.

We adopt the Einstein summation convention for repeated tensor indices. When an ordered block is summed, each of its entries ranges independently from $1$ to $n$. The fiber sum in the lower-order part of the local adjoint will be displayed explicitly. The ordered blocks are defined after Theorem~\ref{thm:green-higher-order}; the multi-index conventions for the local Leibniz expansions are given before Lemma~\ref{lem:meyers-serrin-local}.

\subsection{Sobolev Spaces}

We use the completion and weak-derivative definitions described in the introduction. In the first, the approximating sections have finite Sobolev norm and need not have compact support. This distinction matters on noncompact manifolds and on manifolds with boundary.

\begin{definition}[The space $H^{m,p}$]\label{def:H_space}
Let $m \geq 0$ be an integer and $1 \leq p < \infty$. We define the space of smooth sections with finite Sobolev norm, denoted by $S^{m,p}(E)$, as:
\[
S^{m,p}(E) := \left\{ u \in \Gamma(E) \mathrel{\Big|} \|u\|_{m,p} < \infty \right\},
\]
where the norm is given by
\[
\|u\|_{m,p} := \left(\sum_{s=0}^{m}\int_{M}|\nabla^{s}u|_{g,h_E}^{p}\, d\lambda_{g}\right)^{\frac{1}{p}}.
\]
Here, $\nabla^s u$ denotes the $s$-th iterated covariant derivative of $u$, viewed as a section of the bundle $T^{(0,s)}(TM) \otimes E$. The Sobolev space $H^{m,p}(E)$ is defined as the completion of $S^{m,p}(E)$ with respect to this norm.
\end{definition}

To introduce the definition via weak covariant derivatives, we first recall the notion of the formal adjoint.

\begin{definition}[Weak Covariant Derivative and the space $W^{m,p}$]\label{def:weak_deriv}
Let $s\geq 0$. The \emph{formal adjoint} of the iterated covariant derivative $\nabla^{s}$, denoted by $(\nabla^{s})^{*}$, is the unique differential operator characterized by the identity
\begin{equation}\label{eq:adjoint_def}
\int_{M}\langle \nabla^{s} u, v \rangle_{g,h_E}\, d\lambda_{g}
= \int_{M}\langle u, (\nabla^{s})^{*} v \rangle_{h_E}\, d\lambda_{g},
\end{equation}
for all $u\in\Gamma_c(\operatorname{int}(M);E)$ and $v\in\Gamma_c(\operatorname{int}(M);T^{(0,s)}(TM)\otimes E)$.

A section $u\in L^{1}_{\operatorname{loc}}(E)$ is said to admit a \emph{weak covariant derivative} of order $s$ if there exists a section $v\in L^{1}_{\operatorname{loc}}(T^{(0,s)}(TM)\otimes E)$ such that
\[
\int_{M}\langle v, \psi \rangle_{g,h_E}\, d\lambda_{g}
= \int_{M}\langle u, (\nabla^{s})^{*}\psi\rangle_{h_E}\, d\lambda_{g}
\]
holds for all test sections $\psi\in\Gamma_c(\operatorname{int}(M);T^{(0,s)}(TM)\otimes E)$. If such a section $v$ exists, it is unique almost everywhere; we denote it by $\nabla_{w}^{s}u$.

Finally, the Sobolev space $W^{m,p}(E)$ is defined as:
\[
W^{m,p}(E):=\left\{ u\in L^{p}(E) \mathrel{\Biggr|} \begin{aligned} &\text{for all } 1\leq s\leq m, \text{ the weak derivative } \nabla_w^s u \\ &\text{exists and satisfies } \nabla_w^{s}u \in L^{p}(T^{(0,s)}(TM)\otimes E) \end{aligned} \right\}.
\]
This space is equipped with the same norm $\|\cdot\|_{m,p}$ defined in Definition \ref{def:H_space}.
\end{definition}

Interior test sections impose no boundary condition on $u$. In particular, $W^{m,p}(E)$ is not the closure of $\Gamma_c(\operatorname{int}(M);E)$ unless that closure happens to be the whole space. The formal adjoint above is a differential expression; boundary conditions enter when one chooses the domain of an operator realization in $L^2$.

The space $W^{m,p}(E)$ is complete. Indeed, if $(u_j)$ is Cauchy in its norm, then $u_j\to u$ in $L^p(E)$ and $\nabla_w^s u_j\to v_s$ in $L^p(T^{(0,s)}(TM)\otimes E)$ for $1\leq s\leq m$. For a fixed interior test section, H\"older's inequality allows passage to the limit in the defining identity, giving $v_s=\nabla_w^s u$. The same argument shows that the completion of the smooth finite-norm sections embeds isometrically in $W^{m,p}(E)$. Thus $H^{m,p}(E)\subseteq W^{m,p}(E)$ under this canonical identification. We write $H^m(E)=H^{m,2}(E)$.

Before writing component formulas, we fix the connection notation. In a local frame $(E_1,\dots,E_n)$ for $TM$ and a local frame $(e_1,\dots,e_r)$ for $E$, the coefficients are defined by
\[
 \nabla_{E_c}E_j=\sum_{s=1}^n\Gamma^s_{cj}E_s,
 \qquad
 \nabla^E_{E_c}e_b=\sum_{a=1}^r(A_c)^a_b e_a.
\]
For a coordinate frame $E_i=\partial_i$, the first coefficients are the Christoffel symbols of $g$; $A_c$ is the matrix of the bundle connection in the chosen frame. Unless another connection on $TM$ is explicitly specified, $\nabla$ on tensor factors is the Levi--Civita connection.

To establish an explicit local characterization of the formal adjoint $(\nabla^{s})^{*}$, we first derive the local coordinate expression of the  covariant derivative acting on tensor-valued sections.

\begin{lemma} \label{lem:connection-components}
Let $(M,g)$ be a Riemannian manifold and let $\nabla$ denote a connection on $TM$. Let $E\longrightarrow M$ be a smooth vector bundle with a connection $\nabla^{E}$. We denote by $\nabla$ the induced connection on the tensor product bundle $T^{(k,l)}(TM)\otimes E$, defined uniquely by the Leibniz rule:
\[
\nabla_X (F\otimes \sigma) = (\nabla_X F)\otimes \sigma + F \otimes \nabla^E_X \sigma,
\]
for all $X \in \Gamma(TM)$, $F\in \Gamma(T^{(k,l)}(TM))$, and $\sigma \in \Gamma(E)$.

Let $(E_{1},\dots,E_{n})$ be a local frame for $TM$ over an open set $U$, with dual coframe $(\varepsilon^{1},\dots,\varepsilon^{n})$, and let $(e_{1},\dots,e_{r})$ be a local frame for $E$ over $U$. If a section $F \in \Gamma(T^{(k,l)}(TM)\otimes E)$ is expressed locally as
\[
F = F^{i_{1}\dots i_{k}a}_{j_{1}\dots j_{l}} \, E_{i_{1}}\otimes \cdots \otimes E_{i_{k}}\otimes \varepsilon^{j_{1}}\otimes \cdots \otimes \varepsilon^{j_{l}}\otimes e_{a},
\]
then the components of the covariant derivative $\nabla_{E_c} F$ with respect to these frames are given by
\begin{align*}
(\nabla_{E_c}F)^{i_{1}\dots i_{k}a}_{j_{1}\dots j_{l}}
&= E_{c}\big(F^{i_{1}\dots i_{k}a}_{j_{1}\dots j_{l}}\big)
+ \sum_{t=1}^{k}\sum_{s=1}^{n}\Gamma^{i_{t}}_{cs}\, F^{i_{1}\dots i_{t-1}\,s\,i_{t+1}\dots i_{k}a}_{j_{1}\dots j_{l}} \\
&\quad - \sum_{t=1}^{l}\sum_{s=1}^{n}\Gamma^{s}_{c j_{t}}\, F^{i_{1}\dots i_{k}\,a}_{j_{1}\dots j_{t-1}\,s\,j_{t+1}\dots j_{l}}
+ \sum_{b=1}^{r}(A_{c})^{a}_{b}\, F^{i_{1}\dots i_{k}b}_{j_{1}\dots j_{l}},
\end{align*}
with the connection coefficients defined above. In the first correction term, $s$ replaces $i_t$, so that the upper index block is $i_1,\dots,i_{t-1},s,i_{t+1},\dots,i_k$. In the second, $s$ replaces $j_t$, giving $j_1,\dots,j_{t-1},s,j_{t+1},\dots,j_l$ in the lower block. The block before the replacement is empty when $t=1$, and the block after it is empty when $t=k$ or $t=l$, respectively. The index $s$ ranges independently from $1$ to $n$; only its position in the tensor component is being specified.
\end{lemma}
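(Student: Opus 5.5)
I would prove the formula by applying the Leibniz rule to a single decomposable basis element and then extending by linearity. Write $F=\sum F^{I a}_{J}\,\Theta_{I}^{J}\otimes e_a$, where $\Theta_I^J=E_{i_1}\otimes\cdots\otimes E_{i_k}\otimes\varepsilon^{j_1}\otimes\cdots\otimes\varepsilon^{j_l}$. Three properties of a connection are used. First, $\nabla_X(fG)=X(f)G+f\nabla_XG$ for smooth functions $f$, which follows from the Leibniz rule with the trivial line bundle, or directly from the axioms of a connection on $T^{(k,l)}(TM)\otimes E$. Second, the stated tensor-product Leibniz rule. Third, the standard fact that the induced connection on $T^{(k,l)}(TM)$ is a derivation across tensor factors and commutes with contractions; this is how $\nabla$ on tensor bundles is defined. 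Applying these gives
\[
\nabla_{E_c}\big(F^{Ia}_J\,\Theta_I^J\otimes e_a\big)=E_c(F^{Ia}_J)\,\Theta_I^J\otimes e_a+F^{Ia}_J\,(\nabla_{E_c}\Theta_I^J)\otimes e_a+F^{Ia}_J\,\Theta_I^J\otimes\nabla^E_{E_c}e_a .
\]

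Next I would compute each piece. The vector factors are handled directly by the definition: $\nabla_{E_c}E_{i_t}=\Gamma^{s}_{ci_t}E_s$. For covectors, compatibility with the contraction $\varepsilon^j(E_i)=\delta^j_i$ gives
\[
0=E_c(\delta^j_i)=(\nabla_{E_c}\varepsilon^j)(E_i)+\varepsilon^j(\nabla_{E_c}E_i)=(\nabla_{E_c}\varepsilon^j)(E_i)+\Gamma^j_{ci},
\]
so $\nabla_{E_c}\varepsilon^j=-\Gamma^j_{ci}\varepsilon^i$. Expanding $\nabla_{E_c}\Theta_I^J$ by the product rule then produces a sum over $t$ of terms in which one factor is differentiated. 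The bundle term is $\nabla^E_{E_c}e_b=(A_c)^a_b e_a$.

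The remaining step, and the only delicate one, is re-indexing so that every term is written in the basis $\Theta_I^J\otimes e_a$. The point is to handle the dummy index correctly in each case.
\begin{itemize}
\item In the term where the $t$-th vector slot is differentiated, the summed index $i_t$ is renamed $s$ and the output index $s$ is renamed $i_t$. This gives the coefficient $\Gamma^{i_t}_{cs}F^{\dots s\dots}$.
\item In the covector term, the same swap of $j_t$ and $s$ gives $-\Gamma^{s}_{cj_t}F_{\dots s\dots}$.
\item In the bundle term, swapping $a$ and $b$ gives $(A_c)^a_bF^{\dots b}$.
\end{itemize}
Reading off the coefficients of $\Theta_I^J\otimes e_a$ then gives exactly the displayed formula.

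This argument uses neither metric compatibility nor torsion-freeness. That is consistent with the lemma being stated for an arbitrary connection on $TM$.
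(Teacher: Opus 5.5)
Your proposal is correct and follows essentially the same route as the paper: you split $\nabla_{E_c}F$ by the Leibniz rule into a tensor-field part and a bundle part, then relabel the dummy fiber index in the term coming from $\nabla^E_{E_c}e_b=(A_c)^a_b e_a$. The only difference is that you derive the tensor-field component formula yourself, using the product rule across the factors of $\Theta_I^J$ and obtaining $\nabla_{E_c}\varepsilon^j=-\Gamma^j_{ci}\varepsilon^i$ from compatibility with contraction; the paper instead writes $F=\sum_a F^{(a)}\otimes e_a$ and cites Lee's Proposition~4.16 for $\nabla_{E_c}F^{(a)}$, so your version is simply more self-contained.
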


\begin{proof}
We have
\[
\nabla_{E_c} F = \nabla_{E_c} \left( \sum_{a=1}^r F^{(a)} \otimes e_a \right),
\]
where $F^{(a)} \in \Gamma(T^{(k,l)}(TM))$ denotes the tensor field with components $(F^{(a)})^{i_1 \dots i_k}_{j_1 \dots j_l} = F^{i_1 \dots i_k a}_{j_1 \dots j_l}$. Applying the Leibniz rule for the induced connection yields
\begin{equation} \label{eq:leibniz-expansion}
\nabla_{E_c} F = \sum_{a=1}^r \left( (\nabla_{E_c} F^{(a)}) \otimes e_a + F^{(a)} \otimes \nabla^E_{E_c} e_a \right).
\end{equation}
Recall that for the tensor field $F^{(a)}$, the connection components are given by (see, e.g., \cite[Proposition 4.16]{LeeR}):
\begin{align*}
(\nabla_{E_c} F^{(a)})^{i_1 \dots i_k}_{j_1 \dots j_l}
&= E_c(F^{i_1 \dots i_k a}_{j_1 \dots j_l})
+ \sum_{t=1}^{k}\sum_{s=1}^{n}\Gamma^{i_{t}}_{cs}\, F^{i_{1}\dots i_{t-1}\,s\,i_{t+1}\dots i_{k}a}_{j_{1}\dots j_{l}} \\
&\quad - \sum_{t=1}^{l}\sum_{s=1}^{n}\Gamma^{s}_{c j_{t}}\, F^{i_{1}\dots i_{k}\,a}_{j_{1}\dots j_{t-1}\,s\,j_{t+1}\dots j_{l}}.
\end{align*}
For the second term in \eqref{eq:leibniz-expansion}, we substitute $\nabla^E_{E_c} e_b = (A_c)^a_b e_a$. Relabeling the summation index for the bundle frame as $b$, we obtain
\[
\sum_{b=1}^r F^{(b)} \otimes \nabla^E_{E_c} e_b = \sum_{b=1}^r F^{(b)} \otimes \left( \sum_{a=1}^r (A_c)^a_b e_a \right) = \sum_{a=1}^r \left( \sum_{b=1}^r (A_c)^a_b F^{(b)} \right) \otimes e_a.
\]
The component of this term along the basis element $e_a$ is $\displaystyle\sum_{b=1}^r (A_c)^a_b F^{i_1 \dots i_k b}_{j_1 \dots j_l}$. Combining these contributions yields the asserted formula.
\end{proof}

\begin{remark} \label{rem:total-cov-derivative}
We regard the total covariant derivative $\nabla F$ as a section of $T^{(k,l+1)}(TM)\otimes E$. Using the canonical permutation that keeps the $E$ factor last, in the local frames defined above we write
\[
\nabla F = \sum_{c=1}^{n} (\nabla_{E_c} F) \otimes \varepsilon^c.
\]
Consequently, the components of $\nabla F$ are defined by $(\nabla F)^{i_{1}\dots i_{k} a}_{j_{1}\dots j_{l} c} := (\nabla_{E_c}F)^{i_{1}\dots i_{k} a}_{j_{1}\dots j_{l}}$. Explicitly,
\begin{align*}
(\nabla F)^{i_{1}\dots i_{k} a}_{j_{1}\dots j_{l} c}
&= E_{c}\big(F^{i_{1}\dots i_{k}a}_{j_{1}\dots j_{l}}\big)
+ \sum_{t=1}^{k}\sum_{s=1}^{n}\Gamma^{i_{t}}_{cs}\, F^{i_{1}\dots i_{t-1}\,s\,i_{t+1}\dots i_{k}a}_{j_{1}\dots j_{l}} \\
&\quad - \sum_{t=1}^{l}\sum_{s=1}^{n}\Gamma^{s}_{c j_{t}}\, F^{i_{1}\dots i_{k}\,a}_{j_{1}\dots j_{t-1}\,s\,j_{t+1}\dots j_{l}}
+ \sum_{b=1}^{r}(A_{c})^{a}_{b}\, F^{i_{1}\dots i_{k}b}_{j_{1}\dots j_{l}}.
\end{align*}
We emphasize that we adopt the convention where the differentiation index (here $c$) corresponds to the \emph{last} covariant index in the component representation.
\end{remark}

\section{The Geometric Integration by Parts Formula}\label{sec:integration_by_parts}

With the local properties of the connection established, we now turn to the analytical core of this work. The following theorem provides a rigorous global integration by parts formula for higher-order covariant derivatives. Unlike standard Euclidean formulas, this result uses the induced connection and explicitly identifies the boundary pairings required for the study of boundary value problems.

For a vector field $Y$, $\iota_Y$ denotes the contraction (interior multiplication) of $Y$ into the \emph{last covariant index} of the tensor factor. Explicitly, if $H\in \Gamma\bigl(T^{(k,m+1)}(TM)\otimes E\bigr)$, then $\iota_{Y}H\in \Gamma\bigl(T^{(k,m)}(TM)\otimes E\bigr)$ is given in local coordinates by
\[
\bigl(\iota_{Y}H\bigr)^{i_{1}\dots i_{k}a}_{j_{1}\dots j_{m}}
:= Y^{q}\,H^{i_{1}\dots i_{k}a}_{j_{1}\dots j_{m}q},
\]
where $Y=Y^q\partial_q$.

Here, $\operatorname{tr}_{g}$ denotes the metric contraction of the \emph{last two covariant indices} of $\nabla H$. Specifically, for $m\geq1$, if $H\in \Gamma\bigl(T^{(k,m)}(TM)\otimes E\bigr)$, then $\nabla H\in \Gamma\bigl(T^{(k,m+1)}(TM)\otimes E\bigr)$ and locally
\[
\bigl(\operatorname{tr}_{g}(\nabla H)\bigr)^{i_{1}\dots i_{k}a}_{j_{1}\dots j_{m-1}}
:= g^{pq}\,(\nabla H)^{i_{1}\dots i_{k}a}_{j_{1}\dots j_{m-1}p q}.
\]
That is, the derivative index is contracted with the last covariant index inherited from $H$.

When $\partial M\neq\varnothing$, $\nu$ denotes its outward unit normal and $\widetilde g$ the metric induced on $T\partial M$. These definitions also apply along the boundary. Boundary integrals below are omitted when $\partial M=\varnothing$.

\begin{theorem}[Integration by Parts for Higher-Order Derivatives]\label{thm:green-higher-order}
Let $(M,g)$ be a Riemannian manifold with boundary $\partial M$, and let $\nabla$ denote the Levi-Civita connection on $TM$. Let $E \to M$ be a smooth vector bundle equipped with a fiber metric $h_{E}$ and a connection $\nabla^{E}$ that is compatible with $h_E$. We continue to denote by $\nabla$ the induced connection on the tensor bundles $T^{(k,l)}(TM)\otimes E$.

For any integer $s\geq 1$, and sections $F\in \Gamma_{c}\bigl(T^{(k,l)}(TM)\otimes E\bigr)$ and $G\in \Gamma_{c}\bigl(T^{(k,l+s)}(TM)\otimes E\bigr)$, the following identity holds:
\begin{align*}
\int_{M}\langle \nabla^{s}F,G\rangle_{g,h_{E}}\,d\lambda_{g}
&= (-1)^{s}\int_{M}\big\langle F,(\operatorname{tr}_{g}\circ \nabla)^{s}G\big\rangle_{g,h_{E}}\,d\lambda_{g} \\
&\quad +\sum_{j=0}^{s-1}(-1)^{\,s-1-j}
\int_{\partial M}\Big\langle \nabla^{j}F,\,\iota_{\nu}\bigl((\operatorname{tr}_{g}\circ \nabla)^{\,s-1-j}G\bigr)\Big\rangle_{g,h_{E}}\,d\lambda_{\widetilde{g}},
\end{align*}
The boundary is allowed to be empty.
\end{theorem}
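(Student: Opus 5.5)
The plan is to prove the case $s=1$ first and then induct on $s$.

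\emph{Base case $s=1$.} Fix $F\in\Gamma_c(T^{(k,l)}(TM)\otimes E)$ and $G\in\Gamma_c(T^{(k,l+1)}(TM)\otimes E)$. Define a compactly supported smooth vector field $X$ by duality:
\[
g(X,Y)=\langle F,\iota_Y G\rangle_{g,h_E}\quad\text{for all } Y.
\]
In coordinates, $X^q=g^{qp}\langle F, G_{\cdots p}\rangle$, where the contraction is in the last covariant slot of $G$. The key pointwise identity is
\[
\operatorname{div}X=\langle \nabla F,G\rangle_{g,h_E}+\langle F,\operatorname{tr}_g(\nabla G)\rangle_{g,h_E}.
\]
To verify it, I will work at a point $x$ in normal coordinates centered at $x$ and in a local frame of $E$ that is parallel along radial geodesics from $x$. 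In this frame $\Gamma(x)=0$ and $A(x)=0$, and $h_{ab}(x)$ may be taken to be $\delta_{ab}$.

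\begin{itemize}
\item First note that $\operatorname{div}X=\nabla_qX^q$.
\item Metric compatibility of $\nabla^E$ with $h_E$, together with $\nabla g=0$ for Levi-Civita, makes the induced inner product $\langle\cdot,\cdot\rangle_{g,h_E}$ parallel. It is also parallel with respect to contraction, since contraction commutes with $\nabla$.
\item Hence $\nabla_q$ of the full contraction $g^{qp}\langle F,G_{\cdots p}\rangle$ satisfies the Leibniz rule. It equals $\langle \nabla_qF, G_{\cdots}{}^{q}\rangle+\langle F, g^{pq}(\nabla G)_{\cdots pq}\rangle$.
\item By Remark~\ref{rem:total-cov-derivative}, the derivative index is the last one. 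So the first term is exactly $\langle\nabla F,G\rangle$, and the second is $\langle F,\operatorname{tr}_g\nabla G\rangle$ with the paper's convention.
\end{itemize}

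Lemma~\ref{lem:connection-components} justifies the component manipulations. Integrating and applying the divergence theorem on a manifold with boundary to the compactly supported field $X$ gives
\[
\int_M\langle\nabla F,G\rangle=-\int_M\langle F,\operatorname{tr}_g\nabla G\rangle+\int_{\partial M}g(X,\nu)\,d\lambda_{\widetilde g}.
\]
Finally $g(X,\nu)=\langle F,\iota_\nu G\rangle$, which is the $s=1$ formula.

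\emph{Induction.} Assume the identity holds for $s$, and take $G\in\Gamma_c(T^{(k,l+s+1)}(TM)\otimes E)$. Write $\nabla^{s+1}F=\nabla(\nabla^sF)$ and apply the base case with $F$ replaced by $\nabla^sF$, which has type $(k,l+s)$. This gives
\[
\int\langle\nabla^{s+1}F,G\rangle=-\int\langle\nabla^sF,\operatorname{tr}_g\nabla G\rangle+\int_{\partial M}\langle\nabla^sF,\iota_\nu G\rangle.
\]
The boundary term is the $j=s$ term of the $(s+1)$-sum, with sign $(-1)^0$. Now apply the induction hypothesis to $\int\langle\nabla^sF,G'\rangle$ with $G'=\operatorname{tr}_g\nabla G\in\Gamma_c(T^{(k,l+s)}\otimes E)$. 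Then $(\operatorname{tr}_g\circ\nabla)^{s}G'=(\operatorname{tr}_g\circ\nabla)^{s+1}G$, and the interior term acquires the sign $-(-1)^s=(-1)^{s+1}$. Each boundary term with index $j\le s-1$ becomes $-(-1)^{s-1-j}=(-1)^{(s+1)-1-j}$ times $\langle\nabla^jF,\iota_\nu(\operatorname{tr}_g\circ\nabla)^{s-j}G\rangle$, which is exactly as required. Compact support is preserved at each step, so the divergence theorem applies without completeness.

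\emph{Main obstacle.} The delicate step is the pointwise divergence identity. It needs the index-ordering conventions to line up: the derivative index is last in $\nabla$, $\iota_Y$ contracts the last slot, and $\operatorname{tr}_g$ contracts the last two slots. It also needs the claim that the induced metric and the contraction commute with $\nabla$. I would verify the parallelism of $\langle\cdot,\cdot\rangle_{g,h_E}$ on decomposable tensors via the Leibniz rule in Lemma~\ref{lem:connection-components}, and check the formula $\operatorname{div}X=\nabla_qX^q$ using the normal-coordinate simplification.
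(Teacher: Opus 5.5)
Your proposal is correct and follows essentially the same route as the paper. Both arguments use the vector field $X$ dual to $Y\mapsto\langle F,\iota_Y G\rangle_{g,h_E}$, the pointwise identity $\operatorname{div}X=\langle\nabla F,G\rangle_{g,h_E}+\langle F,\operatorname{tr}_g(\nabla G)\rangle_{g,h_E}$ obtained from metric compatibility, the divergence theorem, and an induction that applies the base case to $(\nabla^sF,G)$ and the hypothesis to $(F,\operatorname{tr}_g\nabla G)$. One small point: normal coordinates are not available at points of $\partial M$, so either observe that your Leibniz-rule argument is tensorial and therefore holds everywhere, or, as the paper does, extend the pointwise identity to the boundary by continuity.
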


\begin{proof}
We proceed by induction on $s$. The strategy is to shift derivatives from $F$ to $G$ via covariant integration by parts, tracking the boundary terms that arise at each step.

\medskip
\noindent\textit{Base Case ($s=1$).}
Fix a point $p\in\operatorname{int}(M)$. Since $\nabla$ on $TM$ is the Levi-Civita connection, we can choose geodesic normal coordinates centered at $p$, with coordinate frame $(\partial_{1},\dots,\partial_{n})$. In these coordinates, $g_{ij}(p)=\delta_{ij}$ and the Levi--Civita connection coefficients vanish at $p$. In particular, $\nabla_{\partial_{i}}\partial_{j}(p)=0$. Furthermore, we choose a local orthonormal frame $(e_{1},\dots,e_{r})$ for the bundle $E$ defined in a neighborhood of $p$, such that $h_{E}(e_{a},e_{b})(p)=\delta_{ab}$.

Let $F\in\Gamma_{c}\bigl(T^{(k,l)}(TM)\otimes E\bigr)$ and $G\in\Gamma_{c}\bigl(T^{(k,l+1)}(TM)\otimes E\bigr)$. We define a map $\Phi:\mathfrak{X}(M)\longrightarrow C^{\infty}(M)$ by
\[
\Phi(Y):=\big\langle F,\,\iota_{Y}G\big\rangle_{g,h_{E}}.
\]
The map $Y\mapsto \iota_{Y}G$ is $C^{\infty}(M)$-linear in $Y$ and the inner product is bilinear, so $\Phi$ defines a 1-form. Let $X:=\Phi^{\sharp}$ be the dual vector field, characterized by $\langle X,Y\rangle_{g}=\Phi(Y)=\big\langle F,\,\iota_{Y}G\big\rangle_{g,h_{E}}$. In particular, for each coordinate vector $\partial_c$,
\begin{equation}\label{eq:X-parcial}
\langle X,\partial_{c}\rangle_{g} = \big\langle F,\,\iota_{\partial_{c}}G\big\rangle_{g,h_{E}}.
\end{equation}
At $p$, the divergence of $X$ is given by
\[
\operatorname{div}X(p) = \sum_{c=1}^{n}\big\langle \nabla_{\partial_{c}}X,\partial_{c}\big\rangle_{g}(p).
\]
Using the compatibility of $\nabla$ with $g$ and the fact that $\nabla_{\partial_{c}}\partial_{c}(p)=0$, we obtain
\begin{equation}\label{eq:divX-en-p-derivada}
\operatorname{div}X(p) = \sum_{c=1}^{n}\partial_{c}\big(\langle X,\partial_{c}\rangle_{g}\big)(p).
\end{equation}
Substituting \eqref{eq:X-parcial} into \eqref{eq:divX-en-p-derivada} and using the product rule:
\begin{equation*}
\operatorname{div}X(p) = \sum_{c=1}^{n} \Big( \big\langle \nabla_{\partial_c}F,\,\iota_{\partial_{c}}G\big\rangle_{g,h_{E}}(p) + \big\langle F,\,\nabla_{\partial_c}\bigl(\iota_{\partial_{c}}G\bigr)\big\rangle_{g,h_{E}}(p) \Big).
\end{equation*}
Using the Leibniz rule for contraction $\nabla_{\partial_c}\bigl(\iota_{\partial_{c}}G\bigr) = \iota_{\partial_{c}}\bigl(\nabla_{\partial_{c}}G\bigr) + \iota_{\nabla_{\partial_{c}}\partial_{c}}G$, and noting that the second term vanishes at $p$, we arrive at:
\begin{equation*}
\operatorname{div}X(p) = \sum_{c=1}^{n} \Big( \big\langle \nabla_{\partial_c}F,\,\iota_{\partial_{c}}G\big\rangle_{g,h_{E}}(p) + \big\langle F,\,\iota_{\partial_{c}}(\nabla_{\partial_c}G)\big\rangle_{g,h_{E}}(p) \Big).
\end{equation*}
We now identify these sums using local coordinates. Let
\[
F = F^{i_{1}\dots i_{k}a}_{j_{1}\dots j_{l}} \partial_{i_{1}}\otimes \dots\otimes \partial_{i_{k}}\otimes dx^{j_{1}}\otimes \cdots \otimes dx^{j_{l}} \otimes e_{a},\] \[G = G^{i_{1}\dots i_{k}a}_{j_{1}\dots j_{l}t} \partial_{i_{1}}\otimes \dots \otimes \partial_{i_{k}}\otimes dx^{j_{1}} \otimes \cdots \otimes dx^{j_{l}}\otimes  dx^{t}\otimes e_{a}.
\]
At $p$, where $g_{ij}=\delta_{ij}$ and $h_{ab}=\delta_{ab}$, the first sum becomes:
\begin{align*}
\sum_{c=1}^{n}\big\langle \nabla_{\partial_c}F,\,\iota_{\partial_{c}}G\big\rangle_{g,h_E}(p)
&= \sum_{c=1}^{n}\sum_{a=1}^{r}\sum_{\substack{1\le i_{1},\dots,i_{k}\le n\\ 1\le j_{1},\dots,j_{l}\le n}} (\nabla_{\partial_c}F)^{i_{1}\dots i_{k}a}_{j_{1}\dots j_{l}}(p)\, G^{i_{1}\dots i_{k}a}_{j_{1}\dots j_{l}c}(p) \\
&= \big\langle \nabla F,\,G\big\rangle_{g,h_E}(p).
\end{align*}
For the second sum, noting that $(\nabla_{\partial_c}G)^{i_{1}\dots i_{k}a}_{j_{1}\dots j_{l}t} = (\nabla G)^{i_{1}\dots i_{k}a}_{j_{1}\dots j_{l}t\,c}$, we have:
\begin{align*}
\sum_{c=1}^{n}\big\langle F,\,\iota_{\partial_{c}}(\nabla_{\partial_c}G\bigr)\big\rangle_{g,h_E}(p)
&= \sum_{c=1}^{n}\sum_{a=1}^{r}\sum_{\substack{1\le i_{1},\dots,i_{k}\le n\\ 1\le j_{1},\dots,j_{l}\le n}} F^{i_{1}\dots i_{k}a}_{j_{1}\dots j_{l}}(p)\, (\nabla G)^{i_{1}\dots i_{k}a}_{j_{1}\dots j_{l}c\,c}(p).
\end{align*}
Since $g^{pq}(p)=\delta^{pq}$ and $\bigl(\operatorname{tr}_{g}(\nabla G)\bigr)^{i_{1}\dots i_{k}a}_{j_{1}\dots j_{l}}
=
g^{pq}\,(\nabla G)^{i_{1}\dots i_{k}a}_{j_{1}\dots j_{l}p q}$, we have \[
\bigl(\operatorname{tr}_{g}(\nabla G)\bigr)^{i_{1}\dots i_{k}a}_{j_{1}\dots j_{l}}(p)
=
\sum_{c=1}^{n}(\nabla G)^{i_{1}\dots i_{k}a}_{j_{1}\dots j_{l}c\,c}(p).
\] Thus,
\[
\sum_{c=1}^{n}\big\langle F,\,\iota_{\partial_{c}}(\nabla_{\partial_c}G)\big\rangle_{g,h_E}(p) = \big\langle F,\,\operatorname{tr}_{g}(\nabla G)\big\rangle_{g,h_E}(p).
\]
Combining these results, and extending the pointwise identity to $\partial M$ by smoothness, we have $\operatorname{div}X = \langle \nabla F,G\rangle_{g,h_{E}} + \langle F,\operatorname{tr}_{g}(\nabla G)\rangle_{g,h_{E}}$ globally. Integrating and applying the Divergence Theorem yields:
\[
\int_{M}\big\langle \nabla F,G\big\rangle_{g,h_{E}}\,d\lambda_{g} = -\int_{M}\big\langle F,\operatorname{tr}_{g}(\nabla G)\big\rangle_{g,h_{E}}\,d\lambda_{g} + \int_{\partial M}\big\langle F,\,\iota_{\nu}G\big\rangle_{g,h_{E}}\,d\lambda_{\widetilde{g}}.
\]

\medskip
\noindent\textit{Inductive Step.}
Assume the identity holds for some $s\ge 1$. Let $G\in\Gamma_{c}\bigl(T^{(k,l+s+1)}(TM)\otimes E\bigr)$. Applying the base case ($s=1$) to $(\nabla^s F, G)$, where $\nabla^{s}F\in \Gamma_{c}(T^{(k,l+s)}(TM)\otimes E)$:
\[
\int_{M}\langle \nabla^{s+1}F,G\rangle_{g,h_{E}}\,d\lambda_{g} = -\int_{M}\big\langle \nabla^{s}F,\operatorname{tr}_{g}(\nabla G)\big\rangle_{g,h_{E}}\,d\lambda_{g} + \int_{\partial M}\big\langle \nabla^{s}F,\,\iota_{\nu}G\big\rangle_{g,h_{E}}\,d\lambda_{\widetilde{g}}.
\]
Define $H:=\operatorname{tr}_{g}(\nabla G)$. Applying the inductive hypothesis to $(F,H)$, we obtain:
\begin{align*}
\int_{M}\langle \nabla^{s}F,H\rangle_{g,h_{E}}\,d\lambda_{g} &= (-1)^{s} \int_{M}\big\langle F,(\operatorname{tr}_{g}\circ\nabla)^{s}H\big\rangle_{g,h_{E}}\,d\lambda_{g} \\
&\quad + \sum_{j=0}^{s-1}(-1)^{\,s-1-j} \int_{\partial M}\Big\langle \nabla^{j}F,\,\iota_{\nu}\bigl((\operatorname{tr}_{g}\circ\nabla)^{\,s-1-j}H\bigr)\Big\rangle_{g,h_{E}}\,d\lambda_{\widetilde{g}}.
\end{align*}
Substituting $H=\operatorname{tr}_{g}(\nabla G)$, we observe that
\[
\begin{aligned}
(\operatorname{tr}_{g}\circ\nabla)^{s}H
&= (\operatorname{tr}_{g}\circ\nabla)^{s+1}G,\\
(\operatorname{tr}_{g}\circ\nabla)^{\,s-1-j}H
&= (\operatorname{tr}_{g}\circ\nabla)^{\,s-j}G.
\end{aligned}
\]
Substituting this result back into the expansion of $\displaystyle\int_{M}\langle \nabla^{s+1}F,G\rangle_{g,h_{E}}\,d\lambda_{g}$, the interior integral term becomes
\[
(-1)^{s+1} \int_{M}\big\langle F,(\operatorname{tr}_{g}\circ\nabla)^{s+1}G\big\rangle_{g,h_{E}}\,d\lambda_{g}.
\]
The boundary sum becomes:
\begin{align*}
-\left( \sum_{j=0}^{s-1}(-1)^{\,s-1-j} \int_{\partial M}\Big\langle \nabla^{j}F,\,\iota_{\nu}\bigl((\operatorname{tr}_{g}\circ\nabla)^{\,s-j}G\bigr)\Big\rangle_{g,h_{E}}\,d\lambda_{\widetilde{g}} \right) \\
= \sum_{j=0}^{s-1}(-1)^{\,s-j} \int_{\partial M}\Big\langle \nabla^{j}F,\,\iota_{\nu}\bigl((\operatorname{tr}_{g}\circ\nabla)^{\,s-j}G\bigr)\Big\rangle_{g,h_{E}}\,d\lambda_{\widetilde{g}}.
\end{align*}
Finally, adding the isolated boundary term from the first step $\displaystyle\int_{\partial M}\big\langle \nabla^{s}F,\,\iota_{\nu}G\big\rangle_{g,h_{E}}\,d\lambda_{\widetilde{g}}$ (which corresponds to $j=s$ since $(-1)^{s-s}=1$ and $(\operatorname{tr}\circ\nabla)^0=\operatorname{Id}$), we extend the sum index to $s$:
\[
\sum_{j=0}^{s}(-1)^{\,s-j} \int_{\partial M}\Big\langle \nabla^{j}F,\,\iota_{\nu}\bigl((\operatorname{tr}_{g}\circ\nabla)^{\,s-j}G\bigr)\Big\rangle_{g,h_{E}}\,d\lambda_{\widetilde{g}}.
\]
This yields exactly the formula for $s+1$, completing the induction.
\end{proof}

For a fixed $s \ge 1$, we denote ordered tuples (blocks) of indices in $\{1,\dots,n\}$ as $I_s=(i_{1},\dots,i_{s})$, $J_s=(j_{1},\dots,j_{s})$, etc. We adopt the Einstein summation convention for these blocks. We define:
\[
\partial_{I_s} := \partial_{i_{s}}\cdots\partial_{i_{1}}, \quad
g^{I_s J_s} := \prod_{l=1}^{s}g^{i_{l}j_{l}}, \quad
g_{J_s T_s} := \prod_{l=1}^{s}g_{j_{l}t_{l}}.
\]
For a multi-index $\beta=(\beta_1,\dots,\beta_n)\in\mathbb N_0^n$, we write $|\beta|:=\displaystyle\sum_{i=1}^n\beta_i$ and $\partial^\beta:=\partial_1^{\beta_1}\cdots\partial_n^{\beta_n}$.

The integration by parts formula derived above is of fundamental importance because it allows us to derive an explicit coordinate expression for the formal adjoint of the $s$-th covariant derivative.

\begin{corollary}[Local Expression of the Formal Adjoint]\label{cor:local_adjoint_expression}
Let $(M,g)$ be a Riemannian manifold with or without smooth boundary, and let $E\to M$ be a vector bundle equipped with a fiber metric $h_{E}$ and a compatible connection $\nabla^{E}$. For $s\ge 1$, the formal adjoint of
\[
\nabla^{s}:\Gamma(E)\longrightarrow \Gamma\bigl(T^{(0,s)}(TM)\otimes E\bigr)
\]
is given globally by
\[
(\nabla^{s})^{*}=(-1)^{s}(\operatorname{tr}_{g}\circ\nabla)^{s}.
\]
Furthermore, in a local chart $(U,x^{1},\dots,x^{n})$, if a section $G\in \Gamma\bigl(T^{(0,s)}(TM)\otimes E\bigr)$ has components $G^{a}_{J_s} := G^{a}_{j_{1}\dots j_{s}}$, then
\begin{equation}\label{eq:adjunto formal s derivada covariante coordenadas}
\begin{split}
\bigl((\nabla^{s})^{*}G\bigr)^{a}
&= (-1)^{s}\Bigg[
\frac{1}{\sqrt{\det(g)}}\,
\partial_{I_s}
\Big(\sqrt{\det(g)}\,g^{I_s J_s}\,G^{a}_{J_s}\Big) \\
&\quad + \sum_{b=1}^{r}\sum_{|\beta|\le s-1}
\bigl(C_{\beta}\bigr)^{a\,J_s}_{b}\,\partial^{\beta}\!\bigl(G^{b}_{J_s}\bigr)
\Bigg],
\end{split}
\end{equation}
where the coefficients $\bigl(C_{\beta}\bigr)^{a\,J_s}_{b}$ are smooth functions on $U$ depending solely on the metric components $g_{ij}$ and its inverse $g^{ij}$, the connection coefficients $\Gamma^{k}_{ij}$ and $(A_{i})^{a}_{b}$, and their partial derivatives up to order $s-1$.
\end{corollary}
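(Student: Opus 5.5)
The global identity follows directly from Theorem~\ref{thm:green-higher-order}. Take $k=l=0$, $F=u\in\Gamma_c(\operatorname{int}(M);E)$ and $G=v\in\Gamma_c(\operatorname{int}(M);T^{(0,s)}(TM)\otimes E)$. Every boundary integrand then vanishes identically on $\partial M$, since $\nabla^j u$ vanishes in a neighborhood of $\partial M$. This leaves
\[
\int_M\langle\nabla^s u,v\rangle_{g,h_E}\,d\lambda_g=\int_M\bigl\langle u,(-1)^s(\operatorname{tr}_g\circ\nabla)^s v\bigr\rangle_{h_E}\,d\lambda_g .
\]
The operator $(-1)^s(\operatorname{tr}_g\circ\nabla)^s$ is a differential operator satisfying \eqref{eq:adjoint_def}. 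For uniqueness, suppose $P$ is another such operator. Then $\int\langle u,(P-Q)v\rangle=0$ for all interior $u$, which forces $(P-Q)v=0$ on $\operatorname{int}(M)$, and hence on $M$ by continuity. This proves the first claim.

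For the local formula I would argue by induction on $s$, tracking only the principal part and lumping everything else into lower-order terms.

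\textbf{Base case $s=1$.} Write $\operatorname{tr}_g(\nabla G)^a=g^{pq}(\nabla G)^a_{pq}$ using Remark~\ref{rem:total-cov-derivative}. This gives
\[
g^{pq}\Bigl(\partial_q G^a_p-\Gamma^m_{qp}G^a_m+(A_q)^a_bG^b_p\Bigr).
\]
The Levi-Civita identity $g^{pq}\Gamma^m_{pq}=-\tfrac{1}{\sqrt{\det g}}\partial_q(\sqrt{\det g}\,g^{qm})$ then converts this to
\[
\tfrac{1}{\sqrt{\det g}}\partial_q(\sqrt{\det g}\,g^{qp}G^a_p)+g^{pq}(A_q)^a_bG^b_p .
\]
This is the claimed form with coefficients of order zero.

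\textbf{General $s$.} I would not iterate the divergence formula factor by factor, since the correction terms become messy. Instead I would argue at the level of principal symbols. By Lemma~\ref{lem:connection-components}, each application of $\operatorname{tr}_g\circ\nabla$ equals $g^{pq}\partial_q$ acting on the component contracted in slot $p$, plus terms involving $\Gamma$, $A$ and $g$ with no derivatives of $G$. Composing $s$ times and expanding with the Leibniz rule gives a top-order part
\[
g^{i_s j_s}\cdots g^{i_1 j_1}\,\partial_{i_s}\cdots\partial_{i_1}G^a_{J_s}
\]
(up to index ordering, harmless since partials commute and the $g$'s are scalars). All remaining terms are sums of $\partial^\beta G^b_{J_s}$ with $|\beta|\le s-1$. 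Their coefficients are polynomials in $g_{ij}$, $g^{ij}$, $\Gamma$, $A$ and their derivatives. At the $t$-th step the coefficient picks up at most one more derivative, so the total number of derivatives on the coefficients is at most $s-1$.

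\textbf{Rewriting the principal part.} Next I would expand $\tfrac{1}{\sqrt{\det g}}\partial_{I_s}(\sqrt{\det g}\,g^{I_sJ_s}G^a_{J_s})$ by Leibniz. Its top-order term is the same expression $g^{I_sJ_s}\partial_{I_s}G^a_{J_s}$, and its remainder again has order at most $s-1$, with coefficients depending on derivatives of $g$ up to order $s$. Subtracting the two expressions shows that the difference is of the form $\sum_{|\beta|\le s-1}C_\beta\,\partial^\beta G$, with smooth coefficients built from the geometric data. This yields \eqref{eq:adjunto formal s derivada covariante coordenadas}.

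The main obstacle is the bookkeeping of the claim that coefficients involve derivatives only up to order $s-1$. The derivatives of $\sqrt{\det g}$ produced by the divergence rewriting reach order $s$ in $g$. I would resolve this by noting that these are derivatives of $g$ of order at most $s-1$ composed with first derivatives, and that $\Gamma$ already contains first derivatives of $g$. Hence "$\Gamma$ and its derivatives up to order $s-1$" absorbs them. If needed, the base-case divergence identity can be used inductively so that $\partial(\sqrt{\det g})$ only ever appears through $\Gamma$.
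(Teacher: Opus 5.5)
Your proposal is correct. The global part is the same as in the paper: boundary terms vanish for interior supports, and uniqueness follows.

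For the local formula you take a genuinely different route. The paper first proves the divergence form
$((\operatorname{tr}_g\circ\nabla)H)^a_{J_m}=\frac{1}{\sqrt{\det g}}\partial_p(\sqrt{\det g}\,g^{pq}H^a_{J_m q})+(\text{zeroth order})$
for every $m$. It then inducts by substituting this expression for $H=(\operatorname{tr}_g\circ\nabla)G$ into the principal term of the inductive hypothesis. It commutes $g^{I_sJ_s}$ past $\partial_p$ and sorts the remainders into the pieces (I), (II), (III) plus the substituted lower-order terms. So the divergence structure is carried through every stage, and the coefficients arise from an explicit recursion.

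You instead observe that each factor is $g^{pq}\partial_q$ plus a zeroth-order term. Hence the $s$-fold composition and the expanded target $\frac{1}{\sqrt{\det g}}\partial_{I_s}(\sqrt{\det g}\,g^{I_sJ_s}G^a_{J_s})$ share the top-order part $g^{I_sJ_s}\partial_{I_s}G^a_{J_s}$, since partials commute. Their difference is then automatically of order at most $s-1$. This is shorter, and it makes the divergence identity unnecessary; your base case is in fact subsumed by the general comparison.

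Your route also forces you to confront the derivative count on the coefficients, which the paper's inductive step only asserts (``smooth coefficients''). Your remark settles this. Since $\partial_k g_{ij}$, $\partial_k g^{ij}$ and $\partial_k\sqrt{\det g}/\sqrt{\det g}=\Gamma^\alpha_{\alpha k}$ are polynomial in $g$, $g^{-1}$ and $\Gamma$, the $s$-th derivatives of the metric quantities reduce to derivatives of $\Gamma$ of order at most $s-1$. That is exactly what justifies the stated dependence of the coefficients.

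The price is that your $C_\beta$ are defined as the difference of two full expansions. They are therefore less structurally transparent than the recursively generated coefficients in the paper.
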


\begin{proof}
For $F\in\Gamma_c(\operatorname{int}(M);E)$ and $G\in\Gamma_c(\operatorname{int}(M);T^{(0,s)}(TM)\otimes E)$, the boundary terms in Theorem~\ref{thm:green-higher-order} vanish, whether or not $\partial M$ is empty. Consequently,
\[
\int_{M}\langle \nabla^{s}F,G\rangle_{g,h_{E}}\,d\lambda_{g}
=
(-1)^{s}\int_{M}\big\langle F,(\operatorname{tr}_{g}\circ \nabla)^{s}G\big\rangle_{g,h_{E}}\,d\lambda_{g}.
\]
By the uniqueness of the formal adjoint (\cite[Lemma~10.1.25]{Nicolaescu2020}), it follows that
\[
(\nabla^{s})^{*}=(-1)^{s}(\operatorname{tr}_{g}\circ\nabla)^{s}.
\]
It remains to verify the local formula \eqref{eq:adjunto formal s derivada covariante coordenadas}. Fix a chart $(U,x^{1},\dots,x^{n})$ and a local frame $(e_{1},\dots,e_{r})$ for $E|_{U}$. Let $m\ge 0$ and let $H\in \Gamma\bigl(T^{(0,m+1)}(TM)\otimes E\bigr)$ with components $H^{a}_{J_m q} = H^{a}_{j_{1}\dots j_{m}q}$. By definition, the components of the trace are
\[
((\operatorname{tr}_{g}\circ\nabla) H)^{a}_{J_m}
=
g^{pq}\,(\nabla_{\partial_{p}}H)^{a}_{J_m q}.
\]
Applying Lemma \ref{lem:connection-components} (with $k=0$, $l=m+1$), we expand the covariant derivative:
\begin{equation}\label{eq:D-raw}
\begin{split}
((\operatorname{tr}_{g}\circ\nabla)H)^{a}_{J_m}
&= g^{pq}\partial_{p}\bigl(H^{a}_{J_m q}\bigr)
-\sum_{\ell=1}^{m}\Gamma^{\alpha}_{p j_{\ell}}\,g^{pq}\,
H^{a}_{j_{1}\dots j_{\ell-1}\alpha j_{\ell+1}\dots j_{m}q} \\
&\quad -\Gamma^{\alpha}_{p q}\,g^{pq}\,H^{a}_{J_m \alpha}
+\sum_{b=1}^{r}(A_{p})^{a}_{b}\,g^{pq}H^{b}_{J_m q}.
\end{split}
\end{equation}

We recall the identity $\Gamma_{\alpha p}^{\alpha}=\frac{1}{\sqrt{\det(g)}}\partial_{p}\left(\sqrt{\det(g)}\right)$ and the formula for the derivative of the inverse metric, $\partial_{p}(g^{pq})=-\Gamma^{p}_{p\alpha}g^{\alpha q}-\Gamma^{q}_{p\alpha}g^{p\alpha}$. These imply the following divergence identity for any collection of functions $(Y_q)_{q=1}^n$:
\begin{equation}\label{eq:key-density-identity}
g^{pq}\partial_{p}\bigl(Y_{q}\bigr)-\Gamma^{\alpha}_{pq}g^{pq}Y_{\alpha}
=
\frac{1}{\sqrt{\det(g)}}\,\partial_{p}\Big(\sqrt{\det(g)}\,g^{pq}Y_{q}\Big).
\end{equation}
Indeed, expanding the right-hand side of \eqref{eq:key-density-identity}:
\begin{align*}
\frac{1}{\sqrt{\det(g)}}\partial_{p}&\left(\sqrt{\det(g)}g^{pq} Y_{q}\right) \\
&= \frac{1}{\sqrt{\det(g)}}\bigg[\partial_{p}\left(\sqrt{\det(g)}\right)g^{pq}Y_{q} + \sqrt{\det(g)}\,\partial_{p}(g^{pq})Y_{q} + \sqrt{\det(g)}\,g^{pq}\partial_{p}(Y_{q})\bigg] \\
&= \Gamma_{\alpha p}^{\alpha}g^{pq}Y_{q} + \partial_{p}(g^{pq})Y_{q} + g^{pq}\partial_{p}(Y_{q}) \\
&= \Gamma_{\alpha p}^{\alpha}g^{pq}Y_{q} + \left(-\Gamma_{p\alpha}^{p}g^{\alpha q}Y_{q}-\Gamma_{p\alpha }^{q}g^{p\alpha}Y_{q}\right) + g^{pq}\partial_{p}(Y_{q}).
\end{align*}
Observing that the first term cancels with the second (after relabeling indices), and relabeling $\alpha \leftrightarrow q$ in the third term, we recover $g^{pq}\partial_{p}(Y_{q})-\Gamma_{pq}^{\alpha}g^{pq}Y_{\alpha}$.

Applying \eqref{eq:key-density-identity} with $Y_{q}=H^{a}_{J_m q}$, equation \eqref{eq:D-raw} simplifies to:
\begin{equation}\label{eq:D-local}
\begin{split}
((\operatorname{tr}_{g}\circ\nabla)H)^{a}_{J_m}
&= \frac{1}{\sqrt{\det(g)}}\,\partial_{p}\Big(\sqrt{\det(g)}\,g^{pq}H^{a}_{J_m q}\Big) \\
&\quad -\sum_{\ell=1}^{m}\Gamma^{\alpha}_{p j_{\ell}}\,g^{pq}\,
H^{a}_{j_{1}\dots j_{\ell-1}\alpha j_{\ell+1}\dots j_{m}q}
+\sum_{b=1}^{r}(A_{p})^{a}_{b}\,g^{pq}H^{b}_{J_m q}.
\end{split}
\end{equation}

We now prove by induction on $s\ge 1$ that for any $G\in\Gamma\bigl(T^{(0,s)}(TM)\otimes E\bigr)$, the following holds:
\begin{equation}\label{eq:claim}
\begin{split}
\bigl((\operatorname{tr}_{g}\circ\nabla)^{s}G\bigr)^{a}
&= \frac{1}{\sqrt{\det(g)}}\,
\partial_{I_s}
\Big(\sqrt{\det(g)}\,g^{I_s J_s}\,G^{a}_{J_s}\Big) \\
&\quad + \sum_{b=1}^{r}\sum_{|\beta|\le s-1}
\bigl(C_{\beta}\bigr)^{a\,J_s}_{b}\,\partial^{\beta}\!\bigl(G^{b}_{J_s}\bigr).
\end{split}
\end{equation}

For $s=1$, setting $m=0$ in \eqref{eq:D-local} yields
\[
((\operatorname{tr}_{g}\circ\nabla)G)^{a}
=
\frac{1}{\sqrt{\det(g)}}\,\partial_{p}\Big(\sqrt{\det(g)}\,g^{pq}G^{a}_{q}\Big)
+\sum_{b=1}^{r}(A_{p})^{a}_{b}\,g^{pq}G^{b}_{q},
\]
which matches the form of \eqref{eq:claim} with $|\beta|= 0$.

Now, assume \eqref{eq:claim} holds for some $s\ge 1$. We prove the case for $s+1$. Let $G\in\Gamma\bigl(T^{(0,s+1)}(TM)\otimes E\bigr)$ and define $H:=(\operatorname{tr}_{g}\circ\nabla)G\in\Gamma\bigl(T^{(0,s)}(TM)\otimes E\bigr)$. By the inductive hypothesis applied to $H$:
\begin{equation}\label{eq:IH-on-H}
\begin{split}
\bigl((\operatorname{tr}_{g}\circ\nabla)^{s}H\bigr)^{a}
&= \frac{1}{\sqrt{\det(g)}}\,
\partial_{I_s}
\Big(\sqrt{\det(g)}\,g^{I_s J_s}\,H^{a}_{J_s}\Big) \\
&\quad + \sum_{b=1}^{r}\sum_{|\beta|\le s-1}
\bigl(C_{\beta}\bigr)^{a\,J_s}_{b}\,\partial^{\beta}\!\bigl(H^{b}_{J_s}\bigr).
\end{split}
\end{equation}
Since $(\operatorname{tr}_{g}\circ\nabla)^{s}H = (\operatorname{tr}_{g}\circ\nabla)^{s+1}G$, we expand $H^{a}_{J_s}$ in terms of $G$ using \eqref{eq:D-local} with $m=s$:
\begin{equation}\label{eq:H-expanded}
\begin{split}
H^{a}_{J_s}
&= \frac{1}{\sqrt{\det(g)}}\,\partial_{p}\Big(\sqrt{\det(g)}\,g^{pq}G^{a}_{J_s q}\Big) \\
&\quad -\sum_{\ell=1}^{s}\Gamma^{\alpha}_{p j_{\ell}}\,g^{pq}\,
G^{a}_{j_{1}\dots j_{\ell-1}\alpha j_{\ell+1}\dots j_{s}q}
+\sum_{b=1}^{r}(A_{p})^{a}_{b}\,g^{pq}G^{b}_{J_s q}.
\end{split}
\end{equation}

Substituting \eqref{eq:H-expanded} into the principal term of \eqref{eq:IH-on-H} splits it into three parts:
\begin{equation}\label{eq:split-I-II-III}
\frac{1}{\sqrt{\det(g)}}\,
\partial_{I_s}
\Big(\sqrt{\det(g)}\,g^{I_s J_s}\,H^{a}_{J_s}\Big)
=
\mathrm{(I)}
+\mathrm{(II)}
+\mathrm{(III)},
\end{equation}
where
\begin{align*}
\mathrm{(I)} &:= \frac{1}{\sqrt{\det(g)}}\, \partial_{I_s} \Big(g^{I_s J_s}\partial_{p}\big(\sqrt{\det(g)}\,g^{pq}G^{a}_{J_s q}\big)\Big),\\[2mm]
\mathrm{(II)} &:= -\frac{1}{\sqrt{\det(g)}}\, \partial_{I_s} \Big(\sqrt{\det(g)}\,g^{I_s J_s} \sum_{\ell=1}^{s}\Gamma^{\alpha}_{p j_{\ell}}\,g^{pq}\, G^{a}_{j_{1}\dots j_{\ell-1}\alpha j_{\ell+1}\dots j_{s}q}\Big),\\[2mm]
\mathrm{(III)} &:= \frac{1}{\sqrt{\det(g)}}\, \partial_{I_s} \Big(\sqrt{\det(g)}\,g^{I_s J_s} \sum_{b=1}^{r}(A_{p})^{a}_{b}\,g^{pq}G^{b}_{J_s q}\Big).
\end{align*}

For term $\mathrm{(I)}$, we utilize the identity $\phi \partial_p f = \partial_p(\phi f) - (\partial_p \phi)f$. Applying this with $\phi = g^{I_s J_s}$ and $f = \sqrt{\det(g)}\,g^{pq}G^{a}_{J_s q}$, we have:
\[
g^{I_s J_s}\partial_{p}\big(\sqrt{\det(g)}\,g^{pq}G^{a}_{J_s q}\big)
=
\partial_p \Big( g^{I_s J_s} \sqrt{\det(g)}\,g^{pq}G^{a}_{J_s q} \Big)
-
\partial_p \big( g^{I_s J_s} \big) \sqrt{\det(g)}\,g^{pq}G^{a}_{J_s q}.
\]
Substituting this back into $\mathrm{(I)}$:
\begin{align}
\mathrm{(I)}
&= \frac{1}{\sqrt{\det(g)}}\, \partial_{I_s} \partial_p \Big( \sqrt{\det(g)}\, g^{I_s J_s} g^{pq} \, G^{a}_{J_s q} \Big)
\label{eq:I-expanded} \\
&\quad - \frac{1}{\sqrt{\det(g)}}\, \partial_{I_s} \Big( \partial_p \big( g^{I_s J_s} \big) \sqrt{\det(g)}\,g^{pq}G^{a}_{J_s q} \Big).
\nonumber
\end{align}
The first term of \eqref{eq:I-expanded} is exactly the principal term required for the order $s+1$. If we define the block indices for order $s+1$ as $I_{s+1} = (I_s, p)$ and $J_{s+1} = (J_s, q)$, then $g^{I_{s+1}J_{s+1}} = g^{I_s J_s}g^{pq}$ and $\partial_{I_{s+1}} = \partial_{I_s}\partial_p$. Thus, this term is
\[
\frac{1}{\sqrt{\det(g)}}\, \partial_{I_{s+1}} \Big( \sqrt{\det(g)}\, g^{I_{s+1} J_{s+1}} \, G^{a}_{J_{s+1}} \Big).
\]

The second term of \eqref{eq:I-expanded} involves $\partial_{I_s}$ acting on a term containing $G$ with no derivatives. By the Leibniz rule, the highest order of derivatives acting on $G$ in this term is $s$. Thus, there exist smooth coefficients $\bigl(B_{\beta}\bigr)$ such that this remainder term is of the form:
\begin{equation}\label{eq:I-remainder}
\sum_{c=1}^{r}\sum_{|\beta|\le s} \bigl(B_{\beta}\bigr)^{a\,J_{s+1}}_{c}\, \partial^{\beta}\!\bigl(G^{c}_{J_{s+1}}\bigr).
\end{equation}

For $\mathrm{(II)}$, the term inside $\partial_{I_s}$ contains $G$ algebraically (zero derivatives). Thus, applying $\partial_{I_s}$ yields terms with at most $s$ derivatives on $G$. Similarly, for $\mathrm{(III)}$, the dependence on $G$ is linear without additional derivatives, so expanding yields terms of order at most $s$. Combining these, there exist coefficients $\bigl(\widetilde{B}_{\beta}\bigr)$ such that:
\begin{equation}\label{eq:II-III-bound}
\mathrm{(II)} + \mathrm{(III)} = \sum_{c=1}^{r}\sum_{|\beta|\le s} \bigl(\widetilde{B}_{\beta}\bigr)^{a\,J_{s+1}}_{c}\, \partial^{\beta}\!\bigl(G^{c}_{J_{s+1}}\bigr).
\end{equation}

Finally, consider the lower-order term from \eqref{eq:IH-on-H}:
\[
\sum_{b=1}^{r}\sum_{|\beta|\le s-1} \bigl(C_{\beta}\bigr)^{a\,J_s}_{b}\,\partial^{\beta}\!\bigl(H^{b}_{J_s}\bigr).
\]
Substituting \eqref{eq:H-expanded} into $H^{b}_{J_s}$, we see that $H$ involves first-order derivatives of $G$. Therefore, applying $\partial^\beta$ (where $|\beta|\le s-1$) to $H$ yields derivatives of $G$ of order at most $(s-1)+1 = s$. Thus, there exist coefficients $\bigl(\overline{D}_{\beta}\bigr)$ such that this sum reduces to:
\begin{equation}\label{eq:lowterm-from-IH}
\sum_{c=1}^{r}\sum_{|\beta|\le s} \bigl(\overline{D}_{\beta}\bigr)^{a\,J_{s+1}}_{c}\, \partial^{\beta}\!\bigl(G^{c}_{J_{s+1}}\bigr).
\end{equation}

Combining the principal term from \eqref{eq:I-expanded} with the remainder terms \eqref{eq:I-remainder}, \eqref{eq:II-III-bound}, and \eqref{eq:lowterm-from-IH}, we conclude that there exist smooth coefficients $\bigl(C_{\beta}\bigr)$ with $|\beta|\le s$ such that
\begin{align*}
\bigl((\operatorname{tr}_{g}\circ\nabla)^{s+1}G\bigr)^{a}
&= \frac{1}{\sqrt{\det(g)}}\,
\partial_{I_{s+1}}
\Big(\sqrt{\det(g)}\,g^{I_{s+1} J_{s+1}}\,G^{a}_{J_{s+1}}\Big) \\
&\quad + \sum_{c=1}^{r}\sum_{|\beta|\le s}
\bigl(C_{\beta}\bigr)^{a\,J_{s+1}}_{c}\,
\partial^{\beta}\!\bigl(G^{c}_{J_{s+1}}\bigr),
\end{align*}
which is exactly the form of \eqref{eq:claim} for $s+1$. This completes the induction.
\end{proof}
\section{Local-to-Global Estimates and Structural Theorems}\label{sec:structural_theorems}

Having established the global integration by parts formula and the local structure of the formal adjoint, we now compare weak covariant derivatives with weak derivatives of component functions. These local comparisons and the independence of Sobolev spaces from smooth geometric choices on compact bases are classical; see \cite[Theorem~10.2.36(a)]{Nicolaescu2020} for bundles, \cite[Proposition~2.2]{Hebey1} for scalar functions, and \cite[Theorem~2]{VelazquezSandoval2026} for a detailed scalar proof. For tangent vectors, Lee~\cite[Lemma~2.53, p.~37]{LeeR} proves the local comparison between a Riemannian norm and the Euclidean norm on a compact coordinate set. The elementary fiber-metric comparison below supplies the uniform constants used in localization.

\begin{lemma}\label{lem:metric_equivalence}
Let $M$ be a smooth compact manifold (with or without boundary), and let $E \to M$ be a smooth vector bundle of finite rank. Let $h$ and $\widetilde{h}$ be two smooth fiber metrics on $E$. Then, there exist constants $c, C > 0$ such that for all $p \in M$ and all $v \in E_p$,
\[
c\,h_{p}(v,v)\leq \widetilde{h}_{p}(v,v)\leq C\,h_{p}(v,v).
\]
\end{lemma}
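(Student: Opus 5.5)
The natural route is a compactness argument on the unit sphere bundle of $E$ with respect to $h$. I would consider
\[
S_h E:=\{v\in E \mid h(v,v)=1\},
\]
and the continuous function $\phi:S_hE\to\mathbb R$ given by $\phi(v)=\widetilde h(v,v)$. Both inequalities are homogeneous of degree two in $v$ and trivially true for $v=0$. It therefore suffices to show that $\phi$ attains a positive minimum $c$ and a finite maximum $C$ on $S_hE$; rescaling $v\neq 0$ to $v/\sqrt{h_p(v,v)}$ then gives $c\,h_p(v,v)\le\widetilde h_p(v,v)\le C\,h_p(v,v)$.

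The first step is to verify that $S_hE$ is compact. I would argue locally. Cover $M$ by finitely many open sets $U_1,\dots,U_N$ over which $E$ is trivial, and choose compact sets $K_i\subseteq U_i$ whose union is $M$; this uses compactness of $M$, and the $K_i$ can be taken as closed coordinate balls or half-balls when $\partial M\neq\varnothing$. Over $U_i$, apply Gram--Schmidt to a local frame to obtain a smooth $h$-orthonormal frame. This identifies $E|_{U_i}\cong U_i\times\mathbb R^r$ so that $h$ becomes the Euclidean inner product. Under this identification $S_hE|_{K_i}\cong K_i\times S^{r-1}$, which is compact. Hence $S_hE$, as a finite union of compact sets, is compact.

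Next, $\phi$ is continuous, since $\widetilde h$ is a smooth section of $E^*\otimes E^*$. Moreover $\phi>0$ on $S_hE$, because $\widetilde h$ is positive definite on each fiber and elements of $S_hE$ are nonzero. By the extreme value theorem, $\phi$ attains a minimum $c>0$ and a maximum $C<\infty$, and the homogeneity reduction above completes the argument.

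The only delicate point is the compactness of $S_hE$ when $M$ has boundary. This is handled by the same local trivialization argument, because boundary charts still give compact sets $K_i$ whose products with $S^{r-1}$ are compact. An equivalent alternative is to work directly in the local frames: the matrices $(h_{ab})$ and $(\widetilde h_{ab})$ are continuous and positive definite on each compact $K_i$, and the extreme eigenvalues of $(h_{ab})^{-1}(\widetilde h_{ab})$ are continuous and positive there. Taking the minimum and maximum over the finitely many $K_i$ then yields $c$ and $C$.
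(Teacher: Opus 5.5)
Your proof is correct and rests on the same idea as the paper's: take the extreme values of $\widetilde h(v,v)$ over $h$-unit vectors, using compactness of the base, and then rescale. The paper minimizes and maximizes on each fiber sphere $S_p$ and only asserts that $p\mapsto\max_{S_p}\widetilde h_p$ (and the corresponding minimum) is continuous; you instead apply the extreme value theorem once on the sphere bundle $S_hE$ and prove its compactness via $h$-orthonormal local frames, which supplies the justification the paper leaves implicit.
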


\begin{proof}
Let $p \in M$. Consider the unit sphere in the fiber $E_p$ with respect to the metric $h$:
\[
S_{p}:=\{v\in E_{p}\mid h_{p}(v,v)=1\}.
\]
Since $E_p$ is a finite-dimensional vector space, $S_p$ is a closed and bounded set, and thus compact. The function $f_{p}:S_{p}\longrightarrow\mathbb{R}$ defined by $f_{p}(v)=\widetilde{h}_{p}(v,v)$ is continuous, so it attains a maximum. Consequently, the function $f:M\longrightarrow\mathbb{R}$ given by
\[
f(p):=\max_{v\in S_{p}}f_{p}(v)
\]
is well-defined. Since the bundle and the metrics are smooth, $f$ is continuous on $M$. As $M$ is compact, there exists a point $p_{0}\in M$ such that $f(p)\leq f(p_{0})=:C$ for all $p\in M$.

Moreover,
\[
C=f(p_{0})=\max_{v\in S_{p_{0}}}\widetilde{h}_{p_{0}}(v,v)>0,
\]
since $\widetilde{h}$ is positive definite and $v\neq 0$ for all $v\in S_{p_{0}}$. Thus, for all $p\in M$ and all $v\in S_{p}$, we have
\[
\widetilde{h}_{p}(v,v)\leq C.
\]
By an analogous argument, considering the minimum, there exists a constant $c>0$ such that for all $p\in M$ and all $v\in S_{p}$,
\[
c\leq \widetilde{h}_{p}(v,v).
\]

Now, let $v\in E_{p}\setminus\{0\}$. By homogeneity, the vector $\frac{v}{|v|_{h_{p}}}$ belongs to $S_{p}$ (where $|v|_{h_p} = \sqrt{h_p(v,v)}$). Therefore,
\[
c\leq
\widetilde{h}_{p}\!\left(\frac{v}{|v|_{h_{p}}},\frac{v}{|v|_{h_{p}}}\right)
\leq C.
\]
Using the bilinearity of the metric, this implies
\[
c\leq \frac{1}{|v|^{2}_{h_{p}}}\widetilde{h}_{p}(v,v)
\leq C,
\]
or equivalently,
\[
c\,h_{p}(v,v)\leq \widetilde{h}_{p}(v,v)\leq C\,h_{p}(v,v).
\]
The case $v=0$ holds trivially, as all terms vanish.
\end{proof}
The proof of Lemma~\ref{lem:metric_equivalence} also applies on any compact subset of a smooth base. The local characterization below is a quantitative form of a known fact, rather than a new characterization of Sobolev spaces. Compare \cite[Lemma~3.5 and Corollary~3.6]{Guidetti2017}, where $\nabla=d+A$ gives the local covariant comparison and its iteration, and \cite{ChanCzubak2024Survey} for functions and forms. Here both directions, at every integer order, are obtained from the formal adjoint in Corollary~\ref{cor:local_adjoint_expression}, together with explicit local norm estimates.

We use regular coordinate balls and regular boundary coordinate half-balls: the chart and the bundle frame extend smoothly to a neighborhood of the compact closure $\overline U$. Such coordinate neighborhoods form a basis; see \cite[Proposition~1.19 and Exercise~1.42]{LeeS}. In a boundary chart, $\phi(U)=B(0,R)\cap\{x_n\geq0\}$. Set
\[
 \Omega_U:=\phi(U\cap\operatorname{int}(M));
\]
this is a Euclidean ball or the open half-ball $B(0,R)\cap\{x_n>0\}$. All coefficient functions and the finitely many derivatives used below are bounded on $\overline U$, and the metric matrices have uniformly positive eigenvalues there. Integrals over $U$ are identified with integrals over $\Omega_U$, since $\partial M$ has zero $n$-dimensional Riemannian measure.

For a block $I_s$, let $\beta_{I_s}\in\mathbb N_0^n$ count how often each coordinate index occurs, so that $\partial^{\beta_{I_s}}=\partial_{I_s}$. A block and its associated multi-index are different objects: summing over $I_s$ means summing over all $n^s$ ordered blocks, whereas $\displaystyle\sum_{|\beta|\leq s-1}$ is over distinct multi-indices $\beta\in\mathbb N_0^n$. We use $\alpha\leq\beta$ to mean $\alpha_i\leq\beta_i$ for every $i\in\{1,\dots,n\}$, and $\alpha<\beta$ to mean $\alpha\leq\beta$ with $\alpha\neq\beta$. Thus $\displaystyle\sum_{\alpha<\beta_{T_s}}$ has a fixed target block $T_s$ and ranges over precisely these proper sub-multi-indices, with coefficient
\[
 \binom{\beta_{T_s}}{\alpha}
 :=\prod_{i=1}^{n}\binom{(\beta_{T_s})_i}{\alpha_i}.
\]
If derivatives depending only on the associated multi-index are grouped from a block sum, their multiplicity is $\frac{s!}{\beta!}$, where $\beta!:=\displaystyle\prod_{i=1}^{n}\beta_i!$.

\begin{lemma}[Local Norm Equivalence]\label{lem:meyers-serrin-local}
Let $(M,g)$ be a Riemannian manifold with Levi--Civita connection $\nabla$. Let $m\geq 0$, $1\leq p<\infty$, and let $(U,\phi)$ be a regular coordinate ball or boundary coordinate half-ball in $M$, and let $E\to M$ be a smooth vector bundle of rank $r$ equipped with a fiber metric $h_{E}$ and a compatible connection $\nabla^{E}$. Let $F\in L^{1}_{\operatorname{loc}}(E)$ and let $\{e_{1},\dots,e_{r}\}$ be a local frame for $E$ over $U$, such that $F$ is expressed locally as $F=\displaystyle\sum_{a=1}^{r}F^{a}e_{a}$.

Then,
\[
F|_{U}\in W^{m,p}(E|_{U}) \quad \text{if and only if} \quad
F^{a}\circ \phi^{-1}\in W^{m,p}(\Omega_U)
\]
for all $1\le a\le r$.

Furthermore, there exist constants $C_{m},c_{m}>0$ (independent of $F$) such that for all $F\in W^{m,p}(E|_{U})$,
\[
c_{m}\sum_{a=1}^{r}
\left\|F^{a}\circ \phi^{-1}\right\|_{W^{m,p}(\Omega_U)}\leq \|F\|_{W^{m,p}(E|_{U})}
\leq
C_{m}\sum_{a=1}^{r}
\left\|F^{a}\circ \phi^{-1}\right\|_{W^{m,p}(\Omega_U)}.
\]
\end{lemma}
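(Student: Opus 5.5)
Work in the chart $\phi$ and the frame $(e_a)$, writing $f^a:=F^a\circ\phi^{-1}$ on $\Omega_U$. Two facts are used throughout. By Lemma~\ref{lem:metric_equivalence}, applied on the compact set $\overline U$, and by the uniform positivity of $g_{ij}$ and $h_{ab}$ there, pointwise tensor norms $|T|_{g,h_E}$ are comparable to the Euclidean norm of the component array $(T^a_{J_s})$. Likewise, $d\lambda_g=\sqrt{\det g}\,dx$ with $\sqrt{\det g}$ bounded above and below on $\overline U$. Hence all $L^p$ norms on $U$ may be computed on components with Lebesgue measure, up to constants. The proof then proceeds by induction on $s\le m$. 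At each order it establishes the identity
\[
(\nabla^s_w F)^a_{J_s}=\partial_{J_s}f^a+\sum_{b}\sum_{|\beta|\le s-1}(D_\beta)^a_{b\,J_s}\,\partial^\beta f^b,
\]
with smooth coefficients $D_\beta$ bounded on $\overline U$. These are the same coefficients that arise when $\nabla^s$ is expanded for smooth sections via Lemma~\ref{lem:connection-components}.

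\emph{Direction ($\Leftarrow$).} Suppose $f^a\in W^{m,p}(\Omega_U)$. For $\psi\in\Gamma_c(\operatorname{int}(M);T^{(0,s)}\otimes E)$ supported in $U$, write $\int\langle F,(\nabla^s)^*\psi\rangle_{h_E}\,d\lambda_g$ in coordinates using Corollary~\ref{cor:local_adjoint_expression}. The factors $\sqrt{\det g}$ cancel in the principal term, which becomes $(-1)^s\int h_{ab}f^a\,\partial_{I_s}(\sqrt{\det g}\,g^{I_sJ_s}\psi^b_{J_s})\,dx$; the remaining terms are $\int h_{ab}f^a(C_\beta)\partial^\beta\psi\,dx$. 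Each product $h_{ab}\sqrt{\det g}\,g^{I_sJ_s}$ is smooth and bounded on $\overline U$, and $\psi$ has compact support in $\Omega_U$. We may therefore move every Euclidean derivative onto $f^a$ using the definition of Euclidean weak derivatives together with the Euclidean Leibniz rule for smooth multiplier times $W^{m,p}$ function. The result has the form $\int\langle v,\psi\rangle_{g,h_E}\,d\lambda_g$, where $v$ is given by the displayed formula, because for smooth $f$ this coincides with the same computation done classically. To avoid redoing the algebra, approximate: on a neighborhood of $\operatorname{supp}\psi$, mollify $f^a$ to obtain $f^a_\varepsilon\to f^a$ in $W^{m,p}_{\rm loc}$. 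For smooth $F_\varepsilon$ the identity $\int\langle\nabla^sF_\varepsilon,\psi\rangle=\int\langle F_\varepsilon,(\nabla^s)^*\psi\rangle$ holds by Theorem~\ref{thm:green-higher-order}, and both sides pass to the limit. This shows that $\nabla^s_wF$ exists and equals the displayed expression. The $L^p$ bound $\|F\|_{W^{m,p}}\le C_m\sum_a\|f^a\|_{W^{m,p}}$ then follows from the boundedness of the $D_\beta$ and the norm comparisons. The test sections $\psi$ whose support meets $U$ but does not lie in it cause no issue, because $W^{m,p}(E|_U)$ is defined on $U$ itself, with test sections in $\operatorname{int}(U)$.

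\emph{Direction ($\Rightarrow$).} This is where the real difficulty lies. Assume $\nabla_w^sF\in L^p$ for all $s\le m$. We show by induction on $s$ that $f^a\in W^{s,p}(\Omega_U)$ and that the displayed identity holds. Suppose this is known up to order $s-1$. Since $f^a\in W^{s-1,p}$, the $(\Leftarrow)$ computation at order $s-1$ gives the identity at order $s-1$. Now take a scalar test function $\varphi\in C_c^\infty(\Omega_U)$ and a fixed block $T_s$. Choose $\psi$ so that the principal part of $(\nabla^s)^*\psi$ in Corollary~\ref{cor:local_adjoint_expression} reproduces $(-1)^s\partial_{T_s}\varphi$ paired with $f^b$. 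Concretely, set
\[
\psi^b_{J_s}:=\frac{1}{\sqrt{\det g}}\,g_{J_sT_s}\,h^{ab}\varphi,
\]
which is chosen to cancel the metric factors. Commuting these smooth factors past $\partial_{T_s}$ produces lower-order terms: derivatives of order $<s$ falling on $\varphi$, with smooth coefficients. This is exactly the role of the $\binom{\beta_{T_s}}{\alpha}$, $\alpha<\beta_{T_s}$, notation. The defining identity of $\nabla_w^sF$ then expresses $\int f^a\partial_{T_s}\varphi\,dx$ as $\int(\text{$L^p$ function})\,\varphi\,dx$ plus terms of the form $\int f^b\,c\,\partial^\alpha\varphi\,dx$ with $|\alpha|\le s-1$. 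Because $f^b\in W^{s-1,p}$, these lower terms can be integrated by parts in the Euclidean sense to become $\int(\partial^\alpha f^b\cdots)\varphi\,dx$, with all integrands in $L^p$. Hence $\partial_{T_s}f^a$ exists weakly and lies in $L^p$, so $f^a\in W^{s,p}$. The identity at order $s$ then follows from the ($\Leftarrow$) computation, and solving it for $\partial_{J_s}f^a$ gives
\[
\|\partial_{J_s}f^a\|_{L^p}\le C\Big(\|\nabla_w^sF\|_{L^p}+\sum_{|\beta|\le s-1}\|\partial^\beta f\|_{L^p}\Big).
\]
Iterating this bound down to order $0$ yields the lower constant $c_m$.

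The main obstacle is the bookkeeping in this step: making sure that, after the smooth factors are inverted, every term other than $\partial_{T_s}\varphi$ carries at most $s-1$ derivatives on $\varphi$. The coefficients in Corollary~\ref{cor:local_adjoint_expression} involve only $|\beta|\le s-1$ derivatives, and the Leibniz expansion of $\partial_{I_s}(\text{smooth}\cdot\varphi)$ has a single top-order term. The plan is to verify this explicitly using the block/multi-index conventions fixed just before the lemma.
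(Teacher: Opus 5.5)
Your proof is correct. Its ($\Rightarrow$) half is essentially the paper's: you use the same test tensor $\frac{1}{\sqrt{\det g}}\,g_{J_sT_s}h^{ab}\varphi$ and the same Leibniz expansion isolating $(-1)^s\int f^a\,\partial_{T_s}\varphi\,dx$. You also use the same induction, in which $f^b\in W^{s-1,p}$ lets you integrate the lower-order terms by parts, and the same recursive lower bound obtained by solving for $\partial_{J_s}f^a$.

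The genuine difference is in ($\Leftarrow$). The paper does not approximate. It writes $\int\langle F,(\nabla^s)^*\Psi\rangle_{h_E}\,d\lambda_g$ using the coordinate form of Corollary~\ref{cor:local_adjoint_expression}, and moves every derivative onto $f^a$ by Euclidean weak integration by parts and the Leibniz rule. It then collects the lower-order contributions into coefficients $B_\gamma$ and $C_\gamma$, and builds $\widehat D_\gamma$ by inverting the weights $g_{I_s\tilde J_s}h^{a\tilde c}/\sqrt{\det g}$. Finally it checks by direct computation that the resulting section satisfies the defining identity.

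You instead mollify $f^a$ near $\phi(\operatorname{supp}\psi)$ and apply Theorem~\ref{thm:green-higher-order} to the smooth approximants. Since $F_\varepsilon$ is only defined near $\operatorname{supp}\psi$, you multiply it by a cutoff equal to one there. You then pass to the limit using the componentwise expansion of $\nabla^s$ obtained by iterating Lemma~\ref{lem:connection-components}.

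What your route buys:
\begin{itemize}
\item It is shorter.
\item In this direction it needs only the global adjoint identity for smooth sections, not the local form of the adjoint.
\item It identifies the coefficients of $\nabla^s_wF$ with the classical ones. This is precisely the triangular formula \eqref{eq:jet-triangular} that the paper later uses for geometric invariance.
\end{itemize}
The paper's route avoids any approximation step and keeps the argument a direct consequence of the explicit adjoint formula. The cost is heavier bookkeeping, and coefficients $\widehat D_\gamma$ whose agreement with the classical ones is only implicit, through uniqueness of weak derivatives.

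When you write it up, state explicitly that the coefficients of the smooth expansion are bounded on $\overline U$. This holds because the chart and frame of a regular coordinate ball extend beyond $\overline U$.
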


\begin{proof}
The case $m=0$ is the comparison of the fiber metrics and measures on $\overline U$. For $m\geq1$, every test section below is supported in $\operatorname{int}(U)=U\cap\operatorname{int}(M)$, so its components are test functions in $\Omega_U$. In particular, all weak integrations by parts remain interior computations in a boundary chart.

\noindent\textit{($\Rightarrow$)} First, assume that $F|_{U}\in W^{m,p}(E|_{U})$. Then, for each $s\le m$, the $s$-th weak covariant derivative of $F|_{U}$ exists; let us denote it by $G\in L^{p}\big(T^{(0,s)}(TU)\otimes E|_{U}\big)$. By definition, for all $\Psi\in \Gamma_c\big(\operatorname{int}(U);T^{(0,s)}(TU)\otimes E|_U\big)$, the following holds:
\[
\int_{U}\langle F,(\nabla^{s})^{*}\Psi\rangle_{h_E}\,d\lambda_{g}
=\int_{U}\langle G,\Psi\rangle_{g,h_E}\,d\lambda_{g}.
\]
In local coordinates, writing $F= F^{a} e_a$ and $\Psi = \Psi^{c}_{J_s}\, dx^{J_s}\otimes e_c$, this identity reads:
\[
\int_{U}
h_{bc}\,
F^{b}\,
\big((\nabla^{s})^{*}\Psi\big)^{c}
\,d\lambda_{g}
=\int_{U}
g^{I_s J_s}\,h_{bc}\,
G^{b}_{I_s}\,
\Psi^{c}_{J_s}
\,d\lambda_{g}.
\]

Fix a target block $T_s=(t_{1},\dots,t_{s})$ and a fixed fiber index $a$. Given $\psi\in C_{c}^{\infty}(\Omega_U)$, we define a convenient test tensor field $\Psi$ by its components:
\begin{equation}\label{eq:test_tensor_convenient}
\Psi^{c}_{J_s}
=\frac{1}{\sqrt{\det(g)}}\,h^{ca}\,
g_{J_s T_s}\,
(\psi\circ \phi).
\end{equation}
By Corollary~\ref{cor:local_adjoint_expression}, the formal adjoint acts on $\Psi$ as
\[
\big((\nabla^{s})^{*}\Psi\big)^{c} =
(-1)^{s}\left[\frac{1}{\sqrt{\det(g)}}
\partial_{I_s}\left(
\sqrt{\det(g)}g^{I_{s}J_{s}}\Psi^{c}_{J_s}\right)+\sum_{d=1}^{r}\sum_{|\beta|\le s-1}
\big(D_{\beta}\big)^{c\, J_s}_{d}
\partial^{\beta}
\Psi^{d}_{J_s}
\right].
\]
Substituting \eqref{eq:test_tensor_convenient} into this expression and using the facts that $\frac{\sqrt{\det(g)}}{\sqrt{\det(g)}}=1$ and $g^{I_{s}J_{s}}g_{J_{s}T_{s}}=\delta^{I_{s}}_{T_{s}}$, we obtain
\[
\begin{aligned}
\big((\nabla^{s})^{*}\Psi\big)^{c}
&=(-1)^{s}\Biggl[\frac{1}{\sqrt{\det(g)}}\partial_{T_s}
\left(h^{ca}(\psi\circ \phi)\right)\\
&\qquad+\sum_{d=1}^{r}\sum_{|\beta|\le s-1}
\big(D_{\beta}\big)^{c\, J_s}_{d}
\partial^{\beta}\left(\frac{h^{da}g_{J_s T_s}}{\sqrt{\det(g)}}
(\psi\circ \phi)\right)\Biggr].
\end{aligned}
\]

Let $I_{1}:=\displaystyle\int_{U}\langle F,(\nabla^{s})^{*}\Psi\rangle_{h_E}\,d\lambda_{g}$ and $I_{2}:=\displaystyle\int_{U}\langle G,\Psi\rangle_{g,h_{E}}\,d\lambda_{g}$. Substituting \eqref{eq:test_tensor_convenient} into $I_{2}$ and noting that $g^{I_s J_s}g_{J_s T_s} = \delta^{I_s}_{T_s}$, we obtain
\begin{equation}\label{eq:I2_forward}
I_{2}=\int_{U}\frac{1}{\sqrt{\det(g)}}\delta^{I_s}_{T_s}\delta_{b}^{a}G^{b}_{I_s}(\psi\circ \phi)\,d\lambda_{g}
=\int_{\Omega_U}(G^{a}_{T_s}\circ \phi^{-1})\psi \,d\lambda_{n}.
\end{equation}

For $I_{1}$, we use the Leibniz formula on $\partial_{T_s}$:
\[
\partial_{T_s}\left(h^{ca}(\psi\circ \phi)\right)
= h^{ca}\partial_{T_s}(\psi\circ \phi) +\sum_{\alpha<\beta_{T_s}}\binom{\beta_{T_s}}{\alpha}\partial^{\alpha}(\psi\circ \phi)\partial^{\beta_{T_s}-\alpha}\left(h^{ca}\right).
\]
Substituting back into the integral $I_{1}$, we obtain
\begin{align*}
I_{1} &= (-1)^{s}\int_{U}\frac{1}{\sqrt{\det(g)}}F^{a}\partial_{T_s}(\psi\circ \phi) \,d\lambda_{g} \\
&\quad + (-1)^{s}\sum_{\alpha<\beta_{T_s}}\int_{U}\frac{1}{\sqrt{\det(g)}}h_{bc}F^{b}\binom{\beta_{T_s}}{\alpha}\partial^{\alpha}(\psi\circ \phi)\partial^{\beta_{T_s}-\alpha}\left(h^{ca}\right)\,d\lambda_{g} \\
&\quad + \sum_{d=1}^{r}\sum_{|\beta|\le s-1}(-1)^{s}\int_{U}h_{bc}F^{b}\big(D_{\beta}\big)^{c\, J_s}_{d}
\partial^{\beta}\left(\frac{h^{da}g_{J_s T_s}}{\sqrt{\det(g)}}
(\psi\circ \phi)\right)\,d\lambda_{g}.
\end{align*}

We now proceed by induction on $s$. For $s=1$, we have $T_1=(t_1)$, $J_{1}=(j_{1})$, so the expression reduces to
\begin{align*}
I_{1} &= -\int_{\Omega_U}(F^{a}\circ \phi^{-1})\partial_{t_{1}}\psi \,d\lambda_{n}
-\int_{\Omega_U}((h_{bc}F^{b})\circ \phi^{-1})\partial_{t_{1}}\left(h^{ca}\circ\phi^{-1}\right)\psi\,d\lambda_{n} \\
&\quad -\sum_{d=1}^{r}\int_{\Omega_U}\left(\left(h_{bc}F^{b}(D_{0})^{c\, j_1}_{d}h^{da}g_{j_1 t_1}\right)\circ\phi^{-1}\right)\psi\,d\lambda_{n}.
\end{align*}
Define
\[
H_{t_{1}}^{a}:=h_{bc}F^{b}\partial_{t_{1}}\left(h^{ca}\right)+\sum_{d=1}^{r}h_{bc}F^{b}(D_{0})^{c\, j_1}_{d}h^{da}g_{j_1 t_1}.
\]
Since $F|_U \in W^{m,p}(E|_{U})$ implies that $F^{b}\circ\phi^{-1}\in L^p(\Omega_U)$, and the metric coefficients are smooth on the compact set $\overline{\Omega_U}$, it follows that $H_{t_{1}}^{a}\circ \phi^{-1}\in L^{p}(\Omega_U)$. Since $I_1 = I_2$, we have
\[
-\int_{\Omega_U}(F^{a}\circ \phi^{-1})\partial_{t_{1}}\psi \,d\lambda_{n}
=\int_{\Omega_U}\big((G_{t_{1}}^{a}\circ \phi^{-1})+(H^{a}_{t_{1}}\circ \phi^{-1})\big)\psi \,d\lambda_{n},
\]
which shows that $F^{a}\circ\phi^{-1}\in W^{1,p}(\Omega_U)$.

Now assume the statement holds up to order $s-1$ for $s\leq m$. The inductive hypothesis guarantees that the components of $F$ in $U$ have weak derivatives of order $\leq s-1$, so we can integrate by parts in the weak sense for the lower-order terms.

First, consider the terms coming from the Leibniz expansion in $I_1$:
\begin{align*}
(-1)^{s}&\sum_{\alpha<\beta_{T_s}}\int_{U}\frac{h_{bc}F^{b}}{\sqrt{\det(g)}}\binom{\beta_{T_s}}{\alpha}\partial^{\alpha}(\psi\circ \phi)\partial^{\beta_{T_s}-\alpha}\left(h^{ca}\right)\,d\lambda_{g} \\
&=(-1)^{s}\sum_{\alpha<\beta_{T_s}}\int_{\Omega_U}\left[\left(h_{bc}F^{b}\binom{\beta_{T_s}}{\alpha}\partial^{\beta_{T_s}-\alpha}\left(h^{ca}\right)\right)\circ \phi^{-1}\right]\partial^{\alpha}\psi \,d\lambda_{n}.
\end{align*}
Since $|\alpha| < |\beta_{T_s}| = s$, we have $|\alpha| \le s-1$. Integrating by parts (weakly) $|\alpha|$ times, we transfer the derivatives from $\psi$ to the coefficient (which involves $F^b$), so the previous term equals
\[
\sum_{\alpha<\beta_{T_s}}(-1)^{s+|\alpha|}\int_{\Omega_U}\partial^{\alpha}\left[\left(h_{bc}F^{b}\binom{\beta_{T_s}}{\alpha}\partial^{\beta_{T_s}-\alpha}\left(h^{ca}\right)\right)\circ \phi^{-1}\right]
\psi\, d\lambda_{n}.
\]

Second, consider the terms in $I_1$ involving the connection coefficients ($D_{\beta}$). Using the same change of variables and recalling that $|\beta|\le s-1$, we get
\begin{align*}
&\sum_{d=1}^{r}\sum_{|\beta|\le s-1}(-1)^{s}\int_{U}h_{bc}F^{b}\big(D_{\beta}\big)^{c\, J_s}_{d}
\partial^{\beta}\left(\frac{h^{da}g_{J_s T_s}}{\sqrt{\det(g)}}
(\psi\circ \phi)\right)\,d\lambda_{g} \\
&=(-1)^{s}\sum_{d=1}^{r}\sum_{|\beta|\le s-1}\int_{\Omega_U}\left[\left(h_{bc}F^{b}\big(D_{\beta}\big)^{c\, J_s}_{d}\sqrt{\det(g)}\right)\circ \phi^{-1}\right]
\\
&\qquad\qquad\times\partial^{\beta}\left(\left(\frac{h^{da}g_{J_s T_s}}{\sqrt{\det(g)}}
\circ \phi^{-1}\right)\psi\right)\,d\lambda_{n}.
\end{align*}
Integrating by parts $|\beta|$ times yields
\[
\begin{aligned}
&(-1)^{s}\sum_{d=1}^{r}\sum_{|\beta|\le s-1}(-1)^{|\beta|}\int_{\Omega_U}\partial^{\beta}\left[\left(h_{bc}F^{b}\big(D_{\beta}\big)^{c\, J_s}_{d}\sqrt{\det(g)}\right)\circ \phi^{-1}\right]\\
&\qquad\qquad\times\left(\frac{h^{da}g_{J_s T_s}}{\sqrt{\det(g)}}
\circ \phi^{-1}\right)\psi\,d\lambda_{n}.
\end{aligned}
\]

We now define $\widetilde{G}^{a}_{T_s}$ on $\Omega_U$ by grouping $G^{a}_{T_s}\circ \phi^{-1}$ (from $I_2$) and moving the terms calculated above from $I_1$ to the other side (introducing an extra factor of $-1$, which becomes $(-1)^{s+1}$):
\begin{align*}
\widetilde{G}^{a}_{T_s} &:= G^{a}_{T_s}\circ \phi^{-1}
+\sum_{\alpha<\beta_{T_s}}(-1)^{s+1+|\alpha|}\partial^{\alpha}\left[\left(h_{bc}F^{b}\binom{\beta_{T_s}}{\alpha}\partial^{\beta_{T_s}-\alpha}\left(h^{ca}\right)\right)\circ \phi^{-1}\right] \\
&\quad + \sum_{d=1}^{r}\sum_{|\beta|\le s-1}(-1)^{s+1+|\beta|}\partial^{\beta}\left[\left(h_{bc}F^{b}\big(D_{\beta}\big)^{c\, J_s}_{d}\sqrt{\det(g)}\right)\circ \phi^{-1}\right]\left(\frac{h^{da}g_{J_s T_s}}{\sqrt{\det(g)}}
\circ \phi^{-1}\right).
\end{align*}

The inductive hypothesis states that $F^{b}\circ\phi^{-1}\in W^{s-1,p}(\Omega_U)$ for all $b\in \{1,\dots,r\}$. Since $\widetilde{G}_{T_s}^{a}$ consists of smooth bounded functions (defined on $\overline{\Omega_U}$) and weak derivatives of $F^{b}\circ\phi^{-1}$ of order at most $s-1$, we conclude that $\widetilde{G}_{T_s}^{a}\in L^{p}(\Omega_U)$. Using the equality $I_{1}=I_{2}$ and the definition of $\widetilde{G}$, we obtain
\[
(-1)^{s}\int_{\Omega_U}(F^{a}\circ \phi^{-1})\partial_{T_s}\psi \,d\lambda_{n}
= \int_{\Omega_U}\widetilde{G}_{T_s}^{a}\psi \,d\lambda_{n}.
\]
This shows that $F^{a}\circ \phi^{-1}$ possesses a weak derivative $\partial_{T_s}(F^a \circ \phi^{-1}) = \widetilde{G}_{T_s}^{a} \in L^{p}(\Omega_U)$. Thus, $F^{a}\circ \phi^{-1}\in W^{s,p}(\Omega_U)$.
Since $s\le m$ was arbitrary, we conclude that $F^{a}\circ\phi^{-1}\in W^{m,p}(\Omega_U)$ for all $a \in \{1,\dots,r\}$.

\medskip
\noindent\textit{($\Leftarrow$)} Now assume that $F^{a}\circ\phi^{-1}\in W^{m,p}(\Omega_U)$ for all $a\in\{1,\dots,r\}$. Fix $s\le m$ and let $\Psi\in\Gamma_c\big(\operatorname{int}(U);T^{(0,s)}(TU)\otimes E|_U\big)$. We have
\[
\int_{U}\langle F,(\nabla^{s})^{*}\Psi\rangle_{h_E}\,d\lambda_{g}
=
\int_{U}
h_{ab}\,
F^{a}
\big((\nabla^{s})^{*}\Psi\big)^{b}\,
d\lambda_{g}.
\]
By Corollary~\ref{cor:local_adjoint_expression}, the adjoint in coordinates takes the form
\[
\big((\nabla^{s})^{*}\Psi\big)^{b} = (-1)^{s}\left[\frac{1}{\sqrt{\det(g)}}
\partial_{I_s}\left(
\sqrt{\det(g)}g^{I_{s}J_{s}}\Psi^{b}_{J_s}\right)
+\sum_{c=1}^{r}\sum_{|\beta|\le s-1}
\big(D_{\beta}\big)^{b\, J_s}_{c}\,
\partial^{\beta}
\Psi^{c}_{J_s}
\right].
\]
Then the integral splits into two terms, $I_{1}$ and $I_{2}$:
\begin{align*}
\int_{U}\langle F,(\nabla^{s})^{*}\Psi\rangle_{h_E}\,d\lambda_{g}
&= \underbrace{(-1)^{s}\int_{U}h_{ab}F^{a}\frac{1}{\sqrt{\det(g)}}
\partial_{I_s}\left(
\sqrt{\det(g)}g^{I_{s}J_{s}}\Psi^{b}_{J_s}\right)\,d\lambda_{g}}_{I_{1}} \\
&\quad + \underbrace{(-1)^{s}\sum_{c=1}^{r}\sum_{|\beta|\leq s-1}\int_{U}h_{ab}F^{a}\big(D_{\beta}\big)^{b\, J_s}_{c}\,
\partial^{\beta}
\Psi^{c}_{J_s}\,d\lambda_{g}}_{I_{2}}.
\end{align*}
Changing the domain of integration to $\Omega_U$ and writing the compositions explicitly:
\begin{align*}
I_1 &= (-1)^{s}\int_{\Omega_U}\left[\left(h_{ab}F^{a}\right)\circ\phi^{-1}\right]
\partial_{I_s}\left(\left(
\sqrt{\det(g)}g^{I_{s}J_{s}}\Psi^{b}_{J_s}\right)\circ\phi^{-1}\right)\,d\lambda_{n}, \\
I_2 &= (-1)^{s}\sum_{c=1}^{r}\sum_{|\beta|\leq s-1}\int_{\Omega_U}\left[\left(h_{ab}F^{a}(D_{\beta})^{b\, J_s}_{c}\sqrt{\det(g)}\right)\circ \phi^{-1}\right]\partial^{\beta}(\Psi^{c}_{J_s}\circ \phi^{-1})\,d\lambda_{n}.
\end{align*}

We now integrate by parts weakly in $I_1$ (valid since $F^{a}\circ \phi^{-1}\in W^{m,p}(\Omega_U)$ and $s \le m$):
\[
I_{1} = \int_{\Omega_U}\partial_{I_s}\left[\left(h_{ab}\circ\phi^{-1}\right)(F^{a}\circ \phi^{-1})\right]\left(\sqrt{\det(g)}g^{I_{s}J_{s}}\Psi^{b}_{J_s}\circ\phi^{-1}\right)\,d\lambda_{n}.
\]
Applying the Leibniz rule to the bracketed term:
\begin{align*}
I_{1} &= \int_{\Omega_U}\Bigl[\bigl(h_{ab}g^{I_s J_s}\sqrt{\det(g)}\circ\phi^{-1}\bigr)\partial_{I_s}(F^{a}\circ\phi^{-1})\Bigr](\Psi^{b}_{J_s}\circ\phi^{-1})\,d\lambda_{n} \\
&\quad + \int_{\Omega_U}\Bigg[\sum_{\substack{I_{s}\\\alpha<\beta_{I_s}}}\binom{\beta_{I_s}}{\alpha}\partial^{\beta_{I_s}-\alpha}\bigl(h_{ab}\circ\phi^{-1}\bigr)\partial^{\alpha}(F^{a}\circ \phi^{-1}) \\
&\qquad\qquad\qquad \times \left(\bigl(g^{I_s J_s}\sqrt{\det(g)}\Psi^{b}_{J_s}\bigr)\circ\phi^{-1}\right)\Bigg]\,d\lambda_{n}.
\end{align*}
Define the coefficient functions $(B_{\gamma})_{ab}^{J_s}$ on $\Omega_U$ for $|\gamma| \le s-1$ as
\[
(B_{\gamma})_{ ab}^{J_s} := \sum_{\substack{I_{s}\\\gamma < \beta_{I_s}}}\binom{\beta_{I_s}}{\gamma}\partial^{\beta_{I_s}-\gamma}\bigl(h_{ab}\circ\phi^{-1}\bigr)\left(\bigl(g^{I_s J_s}\sqrt{\det(g)}\bigr)\circ\phi^{-1}\right).
\]
Substituting this definition into $I_1$:
\begin{align*}
I_{1} &= \int_{\Omega_U}\Bigg[\bigl(h_{ab}g^{I_s J_s}\sqrt{\det(g)}\circ\phi^{-1}\bigr)\partial_{I_s}(F^{a}\circ\phi^{-1}) \\
&\qquad\qquad + \sum_{|\gamma|\leq s-1}(B_{\gamma})_{ ab}^{J_s}\partial^{\gamma}(F^{a}\circ \phi^{-1})\Bigg](\Psi^{b}_{J_s}\circ\phi^{-1})\,d\lambda_{n}.
\end{align*}

For $I_{2}$, integrating by parts $|\beta|$ times yields
\begin{align*}
I_{2} &= \sum_{c=1}^{r}\sum_{|\beta|\le s-1}(-1)^{s+|\beta|}\int_{\Omega_U}\partial^{\beta}\Bigl[\Bigl(\bigl(h_{ab}(D_{\beta})^{b\, J_s}_{c}\sqrt{\det(g)}\bigr)\circ\phi^{-1} \Bigr)\\
&\qquad\qquad\qquad\qquad\qquad \times (F^a \circ \phi^{-1})\Bigr](\Psi^{c}_{J_s}\circ \phi^{-1})\,d\lambda_{n}.
\end{align*}
Applying Leibniz rule again to expand the derivative $\partial^{\beta}$ of the product:
\begin{align*}
I_{2} &= \int_{\Omega_U}\sum_{c=1}^{r}\sum_{|\beta|\le s-1}(-1)^{s+|\beta|}\sum_{\gamma \le \beta}\binom{\beta}{\gamma}\partial^{\beta-\gamma}\Bigl(\bigl(h_{ab}(D_{\beta})^{b\, J_s}_{c}\sqrt{\det(g)}\bigr)\circ\phi^{-1}\Bigr) \\
&\qquad\qquad\qquad \times \partial^{\gamma}(F^{a}\circ \phi^{-1})(\Psi^{c}_{J_s}\circ \phi^{-1})\,d\lambda_{n}.
\end{align*}
Define the coefficient functions $(C_{\gamma})^{J_s}_{ac}$ on $\Omega_U$ for each $|\gamma| \le s-1$ as
\[
(C_{\gamma})^{J_s}_{ac} := \sum_{b=1}^{r}\sum_{\substack{|\beta|\le s-1 \\ \gamma \le \beta}} (-1)^{s+|\beta|} \binom{\beta}{\gamma} \partial^{\beta-\gamma}\left( \left(h_{ab}(D_{\beta})^{b\, J_s}_{c}\sqrt{\det(g)}\right)\circ\phi^{-1} \right).
\]
Substituting this definition into $I_2$:
\[
I_{2} = \int_{\Omega_U}\sum_{|\gamma|\leq s-1}(C_{\gamma})^{J_s}_{ac}\partial^{\gamma}(F^{a}\circ \phi^{-1})(\Psi^{c}_{J_s}\circ \phi^{-1})\,d\lambda_{n}.
\]

Combining both results, and renaming the fiber index $b\to c$ in $I_1$ (using $h_{ac}$):
\begin{align*}
\int_{U}\langle F,(\nabla^{s})^{*}\Psi\rangle_{h_E}\,d\lambda _{g} &= \int_{\Omega_U}\Bigg[\left(h_{ac}g^{I_s J_s}\sqrt{\det(g)}\circ\phi^{-1}\right)\partial_{I_s}(F^{a}\circ\phi^{-1}) \\
&\quad + \sum_{|\gamma|\leq s-1}\left((B_{\gamma})_{ ac}^{J_s} + (C_{\gamma})^{J_s}_{ac}\right)\partial^{\gamma}(F^{a}\circ \phi^{-1})\Bigg](\Psi^{c}_{J_s}\circ\phi^{-1})\,d\lambda_{n}.
\end{align*}

We now define the correction coefficients $(\widehat{D}_{\gamma})$ on $U$ by pulling back $B$ and $C$ and inverting the metric weights:
\[
\big(\widehat{D}_{\gamma}\big)^{a}_{b\,I_s}
:=\frac{
g_{I_s \tilde{J}_s}h^{a\tilde{c}}
}{\sqrt{\det(g)}}\Big[\big((B_{\gamma})^{\tilde{J}_s}_{b\tilde{c}}\circ\phi\big)+
\big((C_{\gamma})^{\tilde{J}_s}_{b\tilde{c}}
\circ\phi\big)
\Big].
\]
Let $G\in L^{p}(T^{(0,s)}(TM)\otimes E|_{U})$ be given in coordinates by:
\begin{equation}\label{eq:weak_derivative_explicit}
G^{a}_{I_s}
=
\partial_{I_s} F^{a}+
\sum_{|\gamma|\le s-1}
\big(\widehat{D}_{\gamma}\big)^{a}_{b\,I_s}
\partial^{\gamma}F^{b}.
\end{equation}
We verify that $G$ is the required weak derivative:
\begin{align*}
\int_{U}\langle G,\Psi\rangle_{g,h_{E}}\,d\lambda_{g} &= \int_{U}h_{ac}g^{I_s J_s}G_{I_s}^{a}\Psi_{J_s}^{c}\,d\lambda_{g} \\
&= \int_{U}h_{ac}g^{I_s J_s}\left(\partial_{I_s}F^{a}+\sum_{|\gamma|\leq s-1}(\widehat{D}_{\gamma})^{a}_{b\, I_s}\partial^{\gamma}F^{b}\right)\Psi^{c}_{J_s}\,d\lambda_{g}.
\end{align*}
The first term matches the principal part. For the lower-order terms, substituting $\widehat{D}_{\gamma}$:
\begin{align*}
&\int_{U}\sum_{|\gamma|\leq s-1} h_{ac}g^{I_s J_s}\left( \frac{g_{I_s \tilde{J}_s}h^{a\tilde{c}}}{\sqrt{\det(g)}}\Big[\big((B_{\gamma})^{\tilde{J}_s}_{b\tilde{c}}\circ\phi\big)+ \big((C_{\gamma})^{\tilde{J}_s}_{b\tilde{c}}\circ\phi\big)\Big] \right) \partial^{\gamma}F^{b}\Psi^{c}_{J_s} \,d\lambda_{g} \\
&= \int_{U}\sum_{|\gamma|\leq s-1} \frac{1}{\sqrt{\det(g)}} (h_{ac}h^{a\tilde{c}}) (g^{I_s J_s}g_{I_s \tilde{J}_s}) \Big[\big((B_{\gamma})^{\tilde{J}_s}_{b\tilde{c}}\circ\phi\big)+ \big((C_{\gamma})^{\tilde{J}_s}_{b\tilde{c}}\circ\phi\big)\Big] \partial^{\gamma}F^{b}\Psi^{c}_{J_s} \,d\lambda_{g}.
\end{align*}
Using the identities $h_{ac}h^{a\tilde{c}} = \delta^{\tilde{c}}_{c}$ and $g^{I_s J_s}g_{I_s \tilde{J}_s} = \delta^{J_s}_{\tilde{J}_s}$, we obtain
\begin{align*}
&= \int_{U}\sum_{|\gamma|\leq s-1} \frac{1}{\sqrt{\det(g)}} \Big[\big((B_{\gamma})^{J_s}_{bc}\circ\phi\big)+ \big((C_{\gamma})^{J_s}_{bc}\circ\phi\big)\Big] \partial^{\gamma}F^{b}\Psi^{c}_{J_s}\,d\lambda_{g} \\
&= \int_{\Omega_U}\sum_{|\gamma|\leq s-1} \left[ (B_{\gamma})^{J_s}_{bc} + (C_{\gamma})^{J_s}_{bc} \right] \partial^{\gamma}(F^{b}\circ \phi^{-1})(\Psi^{c}_{J_s}\circ\phi^{-1})\,d\lambda_{n}.
\end{align*}
This expression matches the expansion of $\displaystyle\int_{U}\langle F,(\nabla^{s})^{*}\Psi\rangle_{h_E}\,d\lambda _{g}$ exactly. Therefore, $G$ is the $s$-th weak covariant derivative of $F$ in $U$. Moreover, $G\in L^{p}(T^{(0,s)}(TU)\otimes E|_{U})$ because each coordinate component is a linear combination of weak derivatives of the components of $F$ (up to order $s$) multiplied by smooth, bounded functions. Since $s\le m$ was arbitrary, $F|_{U}\in W^{m,p}\big(E|_{U}\big)$.

Finally, we establish the norm estimates. The Sobolev norm is defined as
\[
\|F\|_{W^{m,p}(E|_{U})} := \left(\sum_{s=0}^{m} \int_{U} |\nabla^s_w F|_{g,h_E}^p \, d\lambda_g\right)^{\frac{1}{p}}
\]
From \eqref{eq:weak_derivative_explicit}, for any $s \le m$ and ordered block $I_s=(i_1,\dots,i_s)$:
\[
(\nabla^{s}_{w}F)^{a}_{I_s}
=
\partial_{I_s}F^{a}
+
\sum_{|\gamma|\le s-1}
\big(\widehat{D}_{\gamma}\big)^{a}_{b\,I_s}
\,\partial^{\gamma}F^{b}.
\]
Define a fiber metric $\overline{h}_{E}$ on $E|_{U}$ by $\overline{h}_{E}(u,v)=\displaystyle\sum_{a=1}^{r}u^{a}v^{a}$ for sections $u=\displaystyle\sum _{a=1}^{r}u^{a}e_{a}, v=\displaystyle\sum_{a=1}^{r} v^{a}e_{a}$. Also consider the Riemannian metric $\overline g:=\phi^*g_{\mathrm{euc}}$ on $\overline{U}$. Since $\overline{U}$ is compact, Lemma \ref{lem:metric_equivalence} provides constants $c_0, C_0 > 0$ for the pointwise equivalence of the fiber norms, uniform for all $s \in \{0, \dots, m\}$. Here the auxiliary norm uses the same covariant derivatives, changing only the fiber norms and the measure. By integrating these estimates and observing that the Riemannian-Lebesgue measures $d\lambda_g$ and $d\lambda_{\overline{g}}$ are comparable on compact sets through positive constants \cite[Lemma 2]{VelazquezSandoval2026}, we conclude that there exist constants $c, C > 0$ such that the following equivalence of Sobolev norms holds:
\begin{equation}\label{eq:norm_equivalence_local}
c\|F\|_{W^{m,p}(E|_{U}),\overline{g},\overline{h}_{E}} \leq \|F\|_{W^{m,p}(E|_{U})} \leq C\|F\|_{W^{m,p}(E|_{U}),\overline{g},\overline{h}_{E}}.
\end{equation}
We express the auxiliary norm as:

\[\|F\|_{W^{m,p}(E|_{U}),\overline{g},\overline{h}_{E}}= \Bigg(\sum_{s=0}^{m}\int_{U}\Bigg(\sum_{a=1}^{r}\sum_{I_s}\bigg(\partial_{I_s} F^{a}+ \sum_{|\gamma|\le s-1} \big(\widehat{D}_{\gamma}\big)^{a}_{b\,I_s} \partial^{\gamma}F^{b}\bigg)^{2}\Bigg)^{\frac{p}{2}}d\lambda_{\overline{g}}\Bigg)^{\frac{1}{p}}\] \[= \Bigg(\sum_{s=0}^{m}\int_{\Omega_U}\Bigg(\sum_{a=1}^{r}\sum_{I_s}\bigg(\partial_{I_s} (F^{a}\circ\phi^{-1}) + \sum_{|\gamma|\le s-1} \left(\big(\widehat{D}_{\gamma}\big)^{a}_{b\,I_s}\circ \phi^{-1}\right) \partial^{\gamma}(F^{b}\circ\phi^{-1})\bigg)^{2}\Bigg)^{\frac{p}{2}}d\lambda_{n}\Bigg)^{\frac{1}{p}}.\]

To bound this from above, set
\[
 K:=
 \max_{\substack{
 1\leq s\leq m,\;1\leq a,b\leq r\\
 I_s\in\{1,\dots,n\}^{s},\;|\gamma|\leq s-1}}
 \left\|
 \big(\widehat D_{\gamma}\big)^a_{b\,I_s}
 \right\|_{L^\infty(U)}.
\]
Only finitely many terms enter this maximum, and each coefficient
extends smoothly to a neighborhood of $\overline U$; hence $K<\infty$.

Using $(A+B)^{2}\leq 2A^{2}+2B^{2}$, we obtain:
\begin{align*}
\|F\|_{W^{m,p}(E|_{U}),\overline{g},\overline{h}_{E}} &\leq \sqrt{2}\Bigg(\sum_{s=0}^{m}\int_{\Omega_U}\Bigg(\sum_{a=1}^{r}\sum_{|\alpha|=s}\frac{s!}{\alpha!}\left(\partial^{\alpha} (F^{a}\circ\phi^{-1})\right)^{2} \\
&\quad +rn^{s}\left( \sum_{b=1}^{r}\sum_{|\gamma|\le s-1} K \left|\partial^{\gamma}(F^{b}\circ\phi^{-1})\right|\right)^{2}\Bigg)^{\frac{p}{2}}d\lambda_{n}\Bigg)^{\frac{1}{p}}.
\end{align*}
Using $\|\cdot\|_{2} \leq \|\cdot\|_{1}$ to introduce square roots on coefficients, and then applying $(\displaystyle\sum |x_i|)^p \le N^{p-1}\displaystyle\sum |x_i|^p$, all multiplicative constants absorb into a constant $C'>0$:
\begin{align*}
&\leq C' \left(\sum_{s=0}^{m}\int_{\Omega_U}\sum_{a=1}^{r}\left(\sum_{|\alpha|=s}\left|\partial^{\alpha} (F^{a}\circ\phi^{-1})\right|^{p}+ \sum_{|\gamma|\le s-1} \left| \partial^{\gamma}(F^{a}\circ\phi^{-1})\right|^{p}\right)d\lambda_{n}\right)^{\frac{1}{p}} \\
&\leq C'' \left(\sum_{a=1}^{r}\sum_{|\beta|\le m}\int_{\Omega_U} \left|\partial^{\beta}(F^{a}\circ\phi^{-1})\right|^{p}d\lambda_{n}\right)^{\frac{1}{p}}
= C'' \left(\sum_{a=1}^{r} \left\|F^{a}\circ \phi^{-1}\right\|_{W^{m,p}(\Omega_U)}^{p}\right)^{\frac{1}{p}}.
\end{align*}
Using $\|\cdot\|_{p} \leq \|\cdot\|_{1}$, we obtain the upper bound:
\[
\|F\|_{W^{m,p}(E|_{U})} \leq C_{m}\sum_{a=1}^{r} \left\|F^{a}\circ \phi^{-1}\right\|_{W^{m,p}(\Omega_U)}.
\]

For the lower bound, from \eqref{eq:weak_derivative_explicit}, we isolate the partial derivative:
\begin{equation}\label{eq:partial_isolated}
\partial_{I_s} (F^{a}\circ\phi^{-1})
=
(\nabla^{s}_{w}F)^{a}_{I_s} \circ \phi^{-1}
-
\sum_{|\gamma|\le s-1}
\left(\big(\widehat{D}_{\gamma}\big)^{a}_{b\,I_s}\circ \phi^{-1}\right)
\partial^{\gamma}(F^{b}\circ\phi^{-1}).
\end{equation}
The $L^p$ norm of the first term is bounded by the bundle Sobolev norm. Taking $L^p(\Omega_U)$ norms and summing over components and orders:
\begin{equation}\label{eq:auxiliary_inequality}
\sum_{a=1}^{r} \|F^{a}\circ \phi^{-1}\|_{W^{m,p}(\Omega_U)}
\leq
\widetilde{C} \left( \|F\|_{W^{m,p}(E|_{U}),\overline{g},\overline{h}_{E}} + \sum_{a=1}^{r} \|F^{a}\circ \phi^{-1}\|_{W^{m-1,p}(\Omega_U)} \right).
\end{equation}
We proceed by induction on $m$. For $m=1$, the $L^p$ norms are clearly bounded. Assume the bound holds for $m-1$. Then
\[
\sum_{a=1}^{r} \|F^{a}\circ \phi^{-1}\|_{W^{m-1,p}(\Omega_U)} \leq K_{m-1} \|F\|_{W^{m-1,p}(E|_{U}),\overline{g},\overline{h}_{E}}.
\]
Substituting this into \eqref{eq:auxiliary_inequality}:
\[
\sum_{a=1}^{r} \|F^{a}\circ \phi^{-1}\|_{W^{m,p}(\Omega_U)}
\leq
\widetilde{C} \left( 1 + K_{m-1} \right) \|F\|_{W^{m,p}(E|_{U}),\overline{g},\overline{h}_{E}}.
\]
Combining this with \eqref{eq:norm_equivalence_local}, we obtain the lower bound constant $c_m > 0$.
\end{proof}

Since the local characterization in Lemma \ref{lem:meyers-serrin-local} relies on the Sobolev spaces defined on the restricted bundle $E|_U$, we first ensure that the restriction of a global Sobolev section is well-defined and consistent with the local theory:
\begin{proposition}[Restriction to open subsets]\label{prop:restriction_sobolev}
Let $(M,g)$ be a Riemannian manifold and $E \to M$ a smooth vector bundle equipped with a fiber metric $h_E$. If $U \subseteq M$ is an open subset and $u \in W^{m,p}(E)$, then $u|_U \in W^{m,p}(E|_U)$.
\end{proposition}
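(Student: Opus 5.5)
The plan is to take the candidate weak derivatives on $U$ to be the restrictions $(\nabla_w^s u)|_U$, for $1\le s\le m$, and to check the defining identity of Definition~\ref{def:weak_deriv} on $U$ directly. The only real point is that test sections on $U$ extend by zero to test sections on $M$. The integrability conditions are immediate: $u\in L^p(E)$ gives $u|_U\in L^p(E|_U)$, and likewise $(\nabla_w^s u)|_U\in L^p(T^{(0,s)}(TU)\otimes E|_U)$. Both hold because $\int_U|\cdot|^p\,d\lambda_g\le\int_M|\cdot|^p\,d\lambda_g$, the restricted metric, fiber metric and measure being the same objects.

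\textbf{Step 1 (extension of test sections).} Fix $1\le s\le m$ and let $\psi\in\Gamma_c(\operatorname{int}(U);T^{(0,s)}(TU)\otimes E|_U)$. Here $\operatorname{int}(U)=U\cap\operatorname{int}(M)$, and $K:=\operatorname{supp}\psi$ is compact in $U\cap\operatorname{int}(M)$. Define $\widetilde\psi:=\psi$ on $U$ and $\widetilde\psi:=0$ on $M\setminus U$. Every point of $M\setminus U$ has a neighborhood disjoint from $K$, because $K$ is compact, hence closed in $M$ (we take $M$ Hausdorff). Every point of $U$ lies in the open set $U$, on which $\widetilde\psi=\psi$ is smooth. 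So $\widetilde\psi$ is smooth on $M$, and $\operatorname{supp}\widetilde\psi=K\Subset\operatorname{int}(M)$. Thus $\widetilde\psi\in\Gamma_c(\operatorname{int}(M);T^{(0,s)}(TM)\otimes E)$.

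\textbf{Step 2 (locality of the adjoint).} By Corollary~\ref{cor:local_adjoint_expression}, $(\nabla^s)^*=(-1)^s(\operatorname{tr}_g\circ\nabla)^s$ is a differential operator built from the connection, so it is local. Hence $(\nabla^s)^*\widetilde\psi=(\nabla^s)^*\psi$ on $U$, where the right-hand side is computed with the restricted geometric data on $E|_U$, and $(\nabla^s)^*\widetilde\psi=0$ on $M\setminus K$. Equivalently, the formal adjoint on $E|_U$ is the restriction of the global one, since both are given by the same formula and uniqueness holds.

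\textbf{Step 3 (conclusion).} Because all integrands vanish outside $K\subseteq U$, we get
\[
\int_U\langle u|_U,(\nabla^s)^*\psi\rangle_{h_E}\,d\lambda_g=\int_M\langle u,(\nabla^s)^*\widetilde\psi\rangle_{h_E}\,d\lambda_g=\int_M\langle\nabla^s_w u,\widetilde\psi\rangle_{g,h_E}\,d\lambda_g=\int_U\langle(\nabla^s_w u)|_U,\psi\rangle_{g,h_E}\,d\lambda_g .
\]
This shows that $(\nabla_w^s u)|_U$ is the $s$-th weak covariant derivative of $u|_U$, and it lies in $L^p$ by the opening remark. Therefore $u|_U\in W^{m,p}(E|_U)$, with $\|u|_U\|_{W^{m,p}(E|_U)}\le\|u\|_{W^{m,p}(E)}$.

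The only step requiring care is Step 1. One must check that compactness of the support in $\operatorname{int}(U)$ makes the zero extension smooth across $\partial U$, including when $U$ meets $\partial M$. Step 2's locality is routine given the explicit form of the adjoint.
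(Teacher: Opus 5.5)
Your proposal is correct and follows essentially the same argument as the paper: extend the interior test section by zero to $M$, use locality of $(\nabla^s)^*$, and chain the defining identities to identify $(\nabla_w^s u)|_U$ as the weak derivative of $u|_U$. Your explicit checks that the zero extension is smooth (because the compact support is closed in $M$) and that $u|_U\in L^p$ fill in details the paper leaves implicit.
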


\begin{proof}
Let $s \leq m$. To show that $u|_U$ possesses a weak covariant derivative of order $s$ in $L^p(T^{(0,s)}(TU) \otimes E|_U)$, let $\sigma \in \Gamma_c(\operatorname{int}(U);T^{(0,s)}(TU)\otimes E|_U)$ be an arbitrary test section. We extend $\sigma$ to a section $\widetilde{\sigma} \in \Gamma_c(\operatorname{int}(M);T^{(0,s)}(TM)\otimes E)$ by setting it to zero outside $U$. 

Since $u \in W^{m,p}(E)$, it possesses a global $s$-th weak covariant derivative $\nabla_w^s u$ satisfying:
\[
\int_{M} \langle \nabla_{w}^{s}u, \widetilde{\sigma} \rangle_{g,h_{E}} d\lambda_{g} = \int_{M} \langle u, (\nabla^{s})^{*} \widetilde{\sigma} \rangle_{h_{E}} d\lambda_{g}.
\]
Because $(\nabla^s)^*$ is a local operator, the support of $(\nabla^s)^* \widetilde{\sigma}$ is contained in $\operatorname{supp}(\widetilde{\sigma}) = \operatorname{supp}(\sigma) \subseteq U$. Furthermore, on the open set $U$, we have $(\nabla^s)^* \widetilde{\sigma} \equiv (\nabla^s)^* \sigma$. It follows that:

\[\int_{U} \langle (\nabla_{w}^{s}u)|_{U}, \sigma \rangle_{g,h_{E}} d\lambda_{g} = \int_{M} \langle \nabla_{w}^{s}u, \widetilde{\sigma} \rangle_{g,h_{E}} d\lambda_{g}\] \[ 
= \int_{M} \langle u, (\nabla^{s})^{*} \widetilde{\sigma} \rangle_{h_{E}} d\lambda_{g}= \int_{U} \langle u|_{U}, (\nabla^{s})^{*} \sigma \rangle_{h_{E}} d\lambda_{g}.\]

This identity identifies $(\nabla_{w}^{s}u)|_{U}$ as the $s$-th weak covariant derivative of $u|_U$. Since $\nabla_w^s u \in L^p$, its restriction remains in $L^p(T^{(0,s)}(TU) \otimes E|_U)$. As $s \leq m$ was arbitrary, we conclude that $u|_U \in W^{m,p}(E|_U)$.
\end{proof}
Another fundamental property is that the weak covariant derivative is a local operator:
\begin{proposition}[Locality of the Weak Covariant Derivative]\label{prop:weak_derivative_locality}
Let $(M,g)$ be a Riemannian manifold and let $E\to M$ be a smooth vector bundle with fiber metric $h_{E}$ and a connection $\nabla^{E}$. Let $u\in L^{1}_{\operatorname{loc}}(E)$ be a section possessing a weak $s$-th covariant derivative $\nabla^{s}_{w}u$. Let $U\subseteq M$ be an open set. If $u=0$ a.e. in $U$, then $\nabla^{s}_{w}u=0$ a.e. in $U$.
\end{proposition}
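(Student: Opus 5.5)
The plan is to test the defining identity of the weak derivative only against sections supported in $U$, and then use a fundamental-lemma argument to conclude that $\nabla^s_w u$ vanishes almost everywhere in $U$.

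Fix an arbitrary test section $\psi\in\Gamma_c(\operatorname{int}(M);T^{(0,s)}(TM)\otimes E)$ with $\operatorname{supp}\psi\subseteq U\cap\operatorname{int}(M)$. As in the proof of Proposition~\ref{prop:restriction_sobolev}, $(\nabla^s)^*$ is a differential operator; by Corollary~\ref{cor:local_adjoint_expression} it equals $(-1)^s(\operatorname{tr}_g\circ\nabla)^s$. Hence $\operatorname{supp}(\nabla^s)^*\psi\subseteq\operatorname{supp}\psi\subseteq U$. Since $u=0$ a.e. in $U$, the right-hand side of the defining identity vanishes:
\[
\int_M\langle\nabla^s_w u,\psi\rangle_{g,h_E}\,d\lambda_g=\int_U\langle u,(\nabla^s)^*\psi\rangle_{h_E}\,d\lambda_g=0 .
\]

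It remains to deduce that $v:=\nabla^s_w u\in L^1_{\mathrm{loc}}$ vanishes a.e. on $U$ when it integrates to zero against every smooth section compactly supported in $U\cap\operatorname{int}(M)$. Because $\partial M$ has measure zero, it suffices to work in $U\cap\operatorname{int}(M)$. I cover this set by countably many coordinate balls $V$ with local frames $\{e_a\}$ of $E$, so it is enough to show $v=0$ a.e. on each such $V$.

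On a fixed $V$, take a test section as in \eqref{eq:test_tensor_convenient}, namely
\[
\Psi^c_{J_s}=\frac{1}{\sqrt{\det g}}\,h^{ca}g_{J_sT_s}\,(\psi\circ\phi),\qquad \psi\in C_c^\infty(\phi(V)).
\]
The computation leading to \eqref{eq:I2_forward} then gives
\[
\int_{\phi(V)}(v^a_{T_s}\circ\phi^{-1})\,\psi\,d\lambda_n=0\quad\text{for all }\psi .
\]
The Euclidean fundamental lemma of the calculus of variations for $L^1_{\mathrm{loc}}$ functions yields $v^a_{T_s}=0$ a.e. in $V$ for all $a$ and $T_s$, so $v=0$ a.e. in $V$. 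A countable union of null sets is null, so $v=0$ a.e. in $U$.

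The only delicate points are bookkeeping. First, the test sections must be supported in the interior. Second, the support property of $(\nabla^s)^*$ must be justified; this follows because it is a differential operator with the explicit local form of Corollary~\ref{cor:local_adjoint_expression}. Neither step should present a real obstacle, and the main work is the reduction to the Euclidean fundamental lemma via the convenient test tensor.
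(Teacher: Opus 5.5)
Your proposal is correct and follows the paper's proof: you test against sections supported in $U$, use that $(\nabla^s)^*$ is a differential operator (so it does not enlarge supports) to make the right-hand side vanish, and conclude with the fundamental lemma. Your countable cover by coordinate balls, together with the test tensor \eqref{eq:test_tensor_convenient}, simply spells out the paper's one-line ``test one coordinate component at a time'' step. One small point: for the support property, rely only on $(\nabla^s)^*$ being a differential operator, since Corollary~\ref{cor:local_adjoint_expression} assumes a compatible connection and this statement does not.
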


\begin{proof}
Let $\sigma \in \Gamma_c(\operatorname{int}(U);T^{(0,s)}(TU)\otimes E|_U)$ be an arbitrary test section with support in $U$. We extend $\sigma$ by zero to the whole manifold $M$, denoting the extension by $\widetilde{\sigma}$. Since $\operatorname{supp}(\sigma) \subset U$, $\widetilde{\sigma}$ is smooth and compactly supported on $M$. By the definition of the weak derivative,
\[
\int_{M}\langle \nabla_{w}^{s}u,\widetilde{\sigma}\rangle _{g,h_{E}}d\lambda_{g}
= \int_{M}\langle u,(\nabla^{s})^{*}\widetilde{\sigma}\rangle_{h_{E}}d\lambda_{g}.
\]
Since $(\nabla^{s})^{*}$ is a local operator (being a differential operator with smooth coefficients), $\operatorname{supp}((\nabla^{s})^{*}\widetilde{\sigma}) \subseteq \operatorname{supp}(\widetilde{\sigma}) \subset U$. Since $u$ vanishes almost everywhere on $U$, the right-hand side is zero. Thus,
\[
\int_{U}\langle \nabla_{w}^{s}u, \sigma \rangle _{g,h_{E}}d\lambda_{g} = 0
\]
for all $\sigma \in \Gamma_c(\operatorname{int}(U);T^{(0,s)}(TU)\otimes E|_U)$. By testing one coordinate component at a time and using the fundamental lemma for locally integrable functions, we conclude that $\nabla_{w}^{s}u = 0$ almost everywhere in $U$.
\end{proof}

The following observations will be used in the partition-of-unity arguments of Sections~\ref{sec:global_approximation} and~\ref{sec:embeddings_rellich}.

\begin{remark}[Multiplication by smooth functions]\label{rem:smooth-multipliers}
Let $m\geq0$ and $1\leq p<\infty$. If $\chi\in C^\infty(M)$ has bounded covariant derivatives through order $m$, including $\chi$ itself, then multiplication by $\chi$ is bounded on $W^{m,p}(E)$:
\begin{equation}\label{eq:weak-cutoff-bound}
 \|\chi u\|_{W^{m,p}(E)}
 \leq C_m\left(\sum_{t=0}^m\|\nabla^t\chi\|_{L^\infty}\right)
       \|u\|_{W^{m,p}(E)}.
\end{equation}
By Lemma~\ref{lem:meyers-serrin-local} and the Euclidean weak Leibniz rule, the iterated covariant product rule holds for weak derivatives as well. Taking pointwise norms, and using that permutations of tensor factors preserve the induced norm, gives
\[
 |\nabla_w^s(\chi u)|_{g,h_E}
 \leq\sum_{t=0}^s\binom{s}{t}|\nabla^{s-t}\chi|_g
       |\nabla_w^t u|_{g,h_E},\qquad 0\leq s\leq m.
\]
Taking $L^p$ norms and summing over $s$ proves \eqref{eq:weak-cutoff-bound}. In particular, every $\chi\in C_c^\infty(M)$ satisfies its hypotheses.

\end{remark}

\begin{remark}[Extension by zero]\label{rem:zero-extension}
Let $m\geq0$, $1\leq p<\infty$, and let $U\subseteq M$ be open. Suppose that $u\in W^{m,p}(E|_U)$ vanishes almost everywhere in $U\setminus K$ for a compact set $K\subset U$, which may meet $\partial M$. Denote by $\widetilde u$ its extension by zero from $U$ to $M$. Then $\widetilde u\in W^{m,p}(E)$ and
\[
 \nabla_w^s\widetilde u=\widetilde{\nabla_w^s u},\qquad 0\leq s\leq m,
 \qquad \|\widetilde u\|_{W^{m,p}(E)}=\|u\|_{W^{m,p}(E|_U)},
\]
where $\widetilde{\nabla_w^s u}$ denotes extension by zero of the local weak derivative.

To verify this, choose $\chi\in C_c^\infty(U)$ equal to one on a neighborhood of $K$. Fix $1\leq s\leq m$ and $\Psi\in\Gamma_c(\operatorname{int}(M);T^{(0,s)}(TM)\otimes E)$. The section $\chi\Psi|_U$ is an admissible local test section, since
\[
 \operatorname{supp}(\chi\Psi|_U)\subseteq\operatorname{supp}\chi\cap\operatorname{supp}\Psi\Subset U\cap\operatorname{int}(M).
\]
By Proposition~\ref{prop:weak_derivative_locality}, $\nabla_w^s u$ vanishes almost everywhere in $U\setminus K$. Consequently,
\[
 \int_M\langle\widetilde{\nabla_w^s u},\Psi\rangle_{g,h_E}\,d\lambda_g
 =\int_U\langle\nabla_w^s u,\chi\Psi|_U\rangle_{g,h_E}\,d\lambda_g
\]
\[
 =\int_U\langle u,(\nabla^s)^*(\chi\Psi|_U)\rangle_{h_E}\,d\lambda_g
 =\int_M\langle\widetilde u,(\nabla^s)^*\Psi\rangle_{h_E}\,d\lambda_g.
\]
For the last equality, $\chi=1$ on a neighborhood of $K$, so $(\nabla^s)^*(\chi\Psi|_U)$ and $(\nabla^s)^*(\Psi|_U)$ agree there; outside $K$, the section $u$ vanishes almost everywhere. This proves the weak derivative identity. Extension by zero preserves each $L^p$ norm, so it also preserves the Sobolev norm. The extension takes place within $M$, across the artificial edge of $U$, and imposes no additional vanishing condition along $U\cap\partial M$.
\end{remark}

\section{Global Approximation Results}\label{sec:global_approximation}

The following theorem recovers the covariant Meyers--Serrin theorem. For manifolds without boundary it is contained, with more general connection hypotheses, in \cite[Corollary~3.6]{Guidetti2017}; see also \cite{ChanCzubak2024Survey} for the scalar and differential-form settings. We retain the same partition-of-unity argument for smooth boundary, using the interior definition of weak derivatives and smooth approximation up to the flat boundary in coordinate half-balls. No completeness or uniform geometry is needed, because the local approximation errors can be chosen independently and made summable.

\begin{theorem}[Meyers--Serrin for Vector Bundles]\label{thm:meyers-serrin-bundle}
Let $(M,g)$ be a Riemannian manifold with or without smooth boundary, and let $\pi_E:E\to M$ be a smooth vector bundle equipped with a fiber metric $h_E$ and a compatible connection $\nabla^E$. For every integer $m\geq0$ and $1\leq p<\infty$, we have
\[
H^{m,p}(E)=W^{m,p}(E).
\]
\end{theorem}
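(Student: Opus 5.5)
The inclusion $H^{m,p}(E)\subseteq W^{m,p}(E)$ was already recorded after Definition~\ref{def:weak_deriv}: the completion of $S^{m,p}(E)$ embeds isometrically into $W^{m,p}(E)$. Since $W^{m,p}(E)$ is complete and the norms agree, it suffices to prove density. Concretely, we must show that for every $u\in W^{m,p}(E)$ and $\varepsilon>0$ there is $v\in\Gamma(E)$, smooth up to $\partial M$, with $\|u-v\|_{m,p}<\varepsilon$. Then $v\in S^{m,p}(E)$ automatically, because $\|v\|_{m,p}\le\|u\|_{m,p}+\varepsilon<\infty$. For $m=0$ this is the density of smooth sections in $L^p$ and follows from the same construction, so we treat all $m$ uniformly.

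\textbf{Cover and partition of unity.} Choose a countable, locally finite cover $\{U_k\}$ of $M$ by regular coordinate balls and regular boundary half-balls, each trivializing $E$ with a frame $\{e^{(k)}_a\}$. Take a subordinate smooth partition of unity $\{\chi_k\}$ with $\operatorname{supp}\chi_k\subset U_k$ compact. Possibly $\operatorname{supp}\chi_k$ meets $\partial M$. By Proposition~\ref{prop:restriction_sobolev} and Remark~\ref{rem:smooth-multipliers}, $u_k:=\chi_k u|_{U_k}\in W^{m,p}(E|_{U_k})$. Local finiteness gives $u=\sum_k u_k$ a.e. Lemma~\ref{lem:meyers-serrin-local} then places the component functions $u_k^a\circ\phi_k^{-1}$ in $W^{m,p}(\Omega_{U_k})$. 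Each of them vanishes outside the compact set $\phi_k(\operatorname{supp}\chi_k)$ of the ball or half-ball.

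\textbf{Local approximation.} For each $k$, approximate $u_k^a\circ\phi_k^{-1}$ in $W^{m,p}(\Omega_{U_k})$ by functions $w_k^a$ that are smooth up to the flat boundary and supported in a slightly larger compact subset $\phi_k(K_k)$, where $K_k\Subset U_k$ is fixed in advance. We require $K_k$ to meet only the finitely many $U_j$ that already meet $\operatorname{supp}\chi_k$, which keeps the family $\{K_k\}$ locally finite. For interior balls, standard mollification gives this. For half-balls, the compact support lies away from the curved part of the boundary, so we can use translated mollification: first shift the function by $\delta e_n$ inward, i.e.\ consider $x\mapsto f(x+\delta e_n)$, then mollify at scale below $\delta$. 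This gives convergence in $W^{m,p}$ on the open half-ball, by continuity of translation in $L^p$, and produces functions smooth up to $\{x_n=0\}$. This is exactly the boundary approximation deferred to Appendix~\ref{app:boundary}. The upper bound in Lemma~\ref{lem:meyers-serrin-local} converts the Euclidean error into a bundle error. We choose $w_k$ so that $v_k:=\sum_a (w_k^a\circ\phi_k)e^{(k)}_a$ satisfies $\|u_k-v_k\|_{W^{m,p}(E|_{U_k})}<\varepsilon 2^{-k}$.

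\textbf{Gluing.} Extend each $v_k$ and $u_k$ by zero to $M$ (Remark~\ref{rem:zero-extension}), which preserves the norms. Set $v:=\sum_k v_k$. By local finiteness of $\{K_k\}$, $v$ is a locally finite sum of smooth sections, hence $v\in\Gamma(E)$, smooth up to $\partial M$. The triangle inequality, justified on each compact set where the sum is finite and then by monotone convergence of the integrals, gives $\|u-v\|_{m,p}\le\sum_k\|u_k-v_k\|<\varepsilon$.

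The main obstacle is the boundary half-ball step. We need approximants that are smooth up to $\partial M$, while weak derivatives are defined only through interior test sections. The translation-then-mollify argument must be checked to converge in $W^{m,p}$ of the open half-ball and to preserve compact support inside the chart. The remaining bookkeeping, namely local finiteness of the enlarged supports so that $v$ is genuinely smooth, is routine.
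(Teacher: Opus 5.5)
Your proposal is correct and is essentially the paper's proof. It uses the same locally finite cover by coordinate balls and half-balls with a subordinate partition of unity, Lemma~\ref{lem:meyers-serrin-local} to pass between bundle and component norms, chart-wise approximants with errors $\varepsilon 2^{-k}$, and a locally finite sum of smooth sections; your extra condition on $K_k$ is superfluous, since $K_k\Subset U_k$ and local finiteness of $\{U_k\}$ already make $\{K_k\}$ locally finite.

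The differences lie only in the local tools and the final bookkeeping. In interior balls you mollify directly, and in half-balls you translate then mollify. The paper instead invokes the Euclidean Meyers--Serrin theorem in interior balls, and at the boundary uses the zero extension of Lemma~\ref{lem:halfball-zero} together with the half-space density of Lemma~\ref{lem:halfspace-tools} (via Leoni) and a fixed cutoff. You close the estimate by compact exhaustion and monotone convergence, whereas the paper uses absolute summability in the complete space $W^{m,p}(E)$.
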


\begin{proof}
The inclusion $H^{m,p}(E)\subseteq W^{m,p}(E)$ was established after Definition~\ref{def:weak_deriv}. To prove density, choose a countable, locally finite cover by regular coordinate balls and boundary coordinate half-balls $(U_\alpha,\phi_\alpha)_{\alpha\in\mathbb N}$. Paracompactness and second countability provide such a cover, chosen so that the frames extend beyond $\overline{U_\alpha}$. By \cite[Theorem~2.23, pp.~43--44]{LeeS}, let $(\psi_\alpha)$ be a subordinate smooth partition of unity with $\operatorname{supp}\psi_\alpha\Subset U_\alpha$. Put $\Omega_\alpha=\phi_\alpha(U_\alpha\cap\operatorname{int}(M))$.

Consider a section $F\in W^{m,p}(E)$. We can decompose it as
\[
F=\sum_{\alpha=1}^{\infty}\psi_{\alpha}F,
\]
where the sum is locally finite. For each $\alpha$, by Proposition~\ref{prop:restriction_sobolev} and Remark~\ref{rem:smooth-multipliers}, $(\psi_{\alpha}F)|_{U_{\alpha}}$ belongs to $W^{m,p}(E|_{U_{\alpha}})$, since $\psi_{\alpha}$ is smooth and has compact support contained in $U_{\alpha}$.

Fix $\alpha$ and work in the chart $(U_{\alpha},\phi_{\alpha})$, equipped with a local frame $(e_{1},\dots,e_{r})$ of $E$ over $U_{\alpha}$. Applying Lemma \ref{lem:meyers-serrin-local} to the section $(\psi_{\alpha}F)|_{U_{\alpha}}$, we deduce that for every fiber index $a \in \{1,\dots,r\}$,
\[
\big(\psi_{\alpha}F\big)^{a}\circ \phi_{\alpha}^{-1}\in W^{m,p}(\Omega_\alpha).
\]
Moreover, there exist constants $C_{m,\alpha}>0$ such that, for any section $H\in W^{m,p}(E|_{U_{\alpha}})$,
\[
\|H\|_{W^{m,p}(E|_{U_{\alpha}})}\leq
C_{m,\alpha}\sum_{a=1}^{r}
\big\|H^{a}\circ \phi_{\alpha}^{-1}\big\|_{W^{m,p}(\Omega_\alpha)}.
\]

The component functions
\[
 f_\alpha^a=(\psi_\alpha F)^a\circ\phi_\alpha^{-1},
 \qquad K_\alpha:=\phi_\alpha(\operatorname{supp}\psi_\alpha),
\]
vanish almost everywhere outside the fixed compact set $K_\alpha$. In an interior chart, $K_\alpha\Subset\Omega_\alpha$,  the classical Meyers--Serrin theorem~\cite{MeyersSerrin1964} gives smooth functions $h_{\alpha,k}^a$ converging to $f_\alpha^a$ in $W^{m,p}(\Omega_\alpha)$. With a fixed $\eta_\alpha\in C_c^\infty(\Omega_\alpha)$ equal to one near $K_\alpha$, the functions $\widetilde G_{\alpha,k}^a=\eta_\alpha h_{\alpha,k}^a$ belong to $C_c^\infty(\Omega_\alpha)$ and still converge to $f_\alpha^a$, by the scalar multiplier estimate in Remark~\ref{rem:smooth-multipliers} and the identity $\eta_\alpha f_\alpha^a=f_\alpha^a$.

In a boundary chart, write $\Omega_\alpha=B(0,R_\alpha)\cap\mathbb R^n_+$. Now $K_\alpha$ is a compact subset of $B(0,R_\alpha)\cap\{x_n\geq0\}$: it may meet the flat boundary, but its distance from $\partial B(0,R_\alpha)\cap\{x_n\geq0\}$ is positive. For every $a\in\{1,\dots,r\}$, define
\[
 \widehat f_\alpha^a(x):=
 \begin{cases}
 f_\alpha^a(x),&x\in\Omega_\alpha,\\
 0,&x\in\mathbb R^n_+\setminus\Omega_\alpha.
 \end{cases}
\]
Lemma~\ref{lem:halfball-zero} gives $\widehat f_\alpha^a\in W^{m,p}(\mathbb R^n_+)$, with the same Sobolev norm as $f_\alpha^a$. This zero extension is made only inside the open half-space; no value is assigned for $x_n<0$.

By Lemma~\ref{lem:halfspace-tools}, choose $h_{\alpha,k}^a\in C_c^\infty(\mathbb R^n)$ such that $h_{\alpha,k}^a|_{\mathbb R^n_+}\to\widehat f_\alpha^a$ in $W^{m,p}(\mathbb R^n_+)$. Fix $\eta_\alpha\in C_c^\infty(B(0,R_\alpha))$ equal to one near $K_\alpha$, and define
\[
 \widetilde G_{\alpha,k}^a:=(\eta_\alpha h_{\alpha,k}^a)|_{\Omega_\alpha}.
\]
The same cutoff is used for every $k$ and every component $a$. Because $\eta_\alpha\widehat f_\alpha^a=\widehat f_\alpha^a$ almost everywhere in $\mathbb R^n_+$, the error is
\[
 \widetilde G_{\alpha,k}^a-f_\alpha^a
 =\left[\eta_\alpha\bigl(h_{\alpha,k}^a|_{\mathbb R^n_+}-\widehat f_\alpha^a\bigr)\right]\big|_{\Omega_\alpha}.
\]
The scalar multiplier estimate in Remark~\ref{rem:smooth-multipliers}, applied on the open half-space, therefore yields
\[
 \|\widetilde G_{\alpha,k}^a-f_\alpha^a\|_{W^{m,p}(\Omega_\alpha)}
 \leq C_{m,\eta_\alpha}
       \|h_{\alpha,k}^a|_{\mathbb R^n_+}-\widehat f_\alpha^a\|_{W^{m,p}(\mathbb R^n_+)}
 \longrightarrow0.
\]
Each product $\eta_\alpha h_{\alpha,k}^a$ is smooth on all of $\mathbb R^n$. Its restriction thus defines a smooth function on $\phi_\alpha(U_\alpha)$, including its flat boundary, and its support there lies in the fixed compact set $\operatorname{supp}\eta_\alpha\cap\{x_n\geq0\}$. It vanishes near the spherical edge, but its values on the flat boundary need not vanish.

In either case, the approximants have support in a fixed compact subset of $\phi_\alpha(U_\alpha)$, relative to the closed half-space in a boundary chart. Since there are finitely many fiber components, they satisfy
\[
 \sum_{a=1}^r\|\widetilde G_{\alpha,k}^a-f_\alpha^a\|_{W^{m,p}(\Omega_\alpha)}
 \longrightarrow0.
\]

Using the smooth representatives on $\phi_\alpha(U_\alpha)$ just constructed, we define a smooth section $G_{\alpha,k}\in \Gamma(E|_{U_{\alpha}})$ by
\[
G_{\alpha,k}
=
\sum_{a=1}^{r}
\Big[(\widetilde{G}_{\alpha,k})^{a}\circ \phi_{\alpha}\Big]\; e_a.
\]
Then, by the norm estimate from Lemma \ref{lem:meyers-serrin-local}, it follows that
\[
\|G_{\alpha,k}-(\psi_{\alpha}F)|_{U_{\alpha}}\|_{W^{m,p}(E|_{U_{\alpha}})}\xrightarrow[k\to\infty]{}0.
\]

Extending each $G_{\alpha,k}$ by zero outside $U_{\alpha}$, we may view it as a global section with compact support, i.e., $G_{\alpha,k}\in \Gamma_{c}(E)$. Furthermore, the norm on the restriction is given by
\[
\|G_{\alpha,k}-(\psi_{\alpha}F)|_{U_{\alpha}}\|_{W^{m,p}(E|_{U_{\alpha}})}
= \left(\sum_{s=0}^{m}\int_{U_{\alpha}}|\nabla_{w}^{s}(G_{\alpha,k}-(\psi_{\alpha} F)|_{U_{\alpha}})|^{p}_{g,h_{E}}d\lambda_{g}\right)^{\frac{1}{p}}.
\]
The local error has compact support in $U_\alpha$, so Remark~\ref{rem:zero-extension} shows that its weak derivatives extend by zero. Therefore, the integral over $U_\alpha$ coincides with the global integral:
\[
\left(\sum_{s=0}^{m}\int_{M}|\nabla_{w}^{s}(G_{\alpha,k}-\psi_{\alpha} F)|^{p}_{g,h_{E}}d\lambda_{g}\right)^{\frac{1}{p}}.
\]
Consequently,
\[
\|G_{\alpha,k}-\psi_{\alpha}F\|_{W^{m,p}(E)}\xrightarrow[k\to\infty]{}0.
\]

Now, fix $\alpha\in \mathbb{N}$ and an integer $q\in \mathbb{N}$. By the convergence established above, there exists an index $k_{\alpha,q}\in \mathbb{N}$ such that
\[
\|\psi_{\alpha}F-G_{\alpha,k_{\alpha,q}}\|_{W^{m,p}(E|_{U_{\alpha}})}<\frac{1}{2^{\alpha+q}}.
\]

Since $\operatorname{supp}(G_{\alpha,k})\subseteq U_{\alpha}$ for all $\alpha,k\in\mathbb{N}$ and the cover $(U_{\alpha})_{\alpha\in\mathbb{N}}$ is locally finite, the sum
\[
G_{q}:=\sum_{\alpha=1}^{\infty}G_{\alpha,k_{\alpha,q}}
\]
is locally finite and therefore defines a global smooth section $G_{q}\in \Gamma(E)$.

We now show that $G_q$ approximates $F$ in the Sobolev norm. The errors form an absolutely summable series in the complete space $W^{m,p}(E)$. Its limit agrees locally with $F-G_q$, since the cover is locally finite. Thus the triangle inequality gives:
\[
\|F-G_{q}\|_{W^{m,p}(E)}
\leq \sum_{\alpha=1}^{\infty}\|\psi_{\alpha}F-G_{\alpha,k_{\alpha,q}}\|_{W^{m,p}(E)}
\leq \sum_{\alpha=1}^{\infty}\frac{1}{2^{\alpha+q}}=\frac{1}{2^{q}}.
\]
In particular,
\[
\|G_q\|_{W^{m,p}(E)}
\leq \|F\|_{W^{m,p}(E)} + \|F-G_q\|_{W^{m,p}(E)}
\leq\|F\|_{W^{m,p}(E)} + 2^{-q}<\infty,
\]
which implies that $G_q\in W^{m,p}(E)\cap \Gamma(E)$ for all $q\in \mathbb{N}$.

Since $2^{-q}\to 0$ as $q\to\infty$, we conclude that $G_{q}\to F$ in $W^{m,p}(E)$. Thus, every section in $W^{m,p}(E)$ can be approximated in the Sobolev norm by smooth sections, proving that
\[
H^{m,p}(E)=W^{m,p}(E).
\]
\end{proof}
The invariance below is classical; compare \cite[Theorem~10.2.36(a)]{Nicolaescu2020} for bundles and \cite[Proposition~2.2]{Hebey1} for scalar functions. The proof below extends the scalar jet comparison of \cite[Theorem~2]{VelazquezSandoval2026} to sections of a vector bundle and uses smooth density to pass to weak Sobolev spaces.

\begin{theorem}[Geometric Invariance on Compact Manifolds]\label{thm:geometric_invariance}
Let $M$ be a compact smooth manifold with or without smooth boundary, and let $E\to M$ be a smooth vector bundle. For every integer $m\geq0$ and $1\leq p<\infty$, the spaces $W^{m,p}(E)$ obtained from different smooth Riemannian metrics, fiber metrics, and compatible connections coincide as sets of measurable sections and have equivalent norms.
\end{theorem}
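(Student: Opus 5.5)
We compare two data sets, $(g,h_E,\nabla^E)$ and $(\widetilde g,\widetilde h_E,\widetilde\nabla^E)$, with Levi--Civita connections $\nabla^g$ and $\nabla^{\widetilde g}$ on the tensor factors. The goal is a two-sided estimate $c\|u\|_{m,p}\le\|u\|^{\sim}_{m,p}\le C\|u\|_{m,p}$ for smooth sections, which is then transferred to the weak spaces by density. Since $M$ is compact, every smooth section has finite norm for both choices, so $S^{m,p}(E)=\Gamma(E)$ in both cases. By Theorem~\ref{thm:meyers-serrin-bundle}, each $W^{m,p}(E)$ is the completion of $\Gamma(E)$ in its own norm. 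Once the two norms are shown equivalent on $\Gamma(E)$, the identity map of $\Gamma(E)$ extends to an isomorphism of the completions. Both completions embed into $L^p(E)$, where the two $L^p$ norms are equivalent by Lemma~\ref{lem:metric_equivalence} and the comparability of $d\lambda_g$ and $d\lambda_{\widetilde g}$. Hence the two completions are the same set of measurable sections, and the extended map is the identity. By symmetry, it suffices to prove the upper bound $\|u\|^{\sim}_{m,p}\le C\|u\|_{m,p}$ for $u\in\Gamma(E)$.

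The core is a pointwise jet comparison. The differences $A:=\nabla^{\widetilde g}-\nabla^{g}$ (a smooth $(1,2)$-tensor) and $B:=\widetilde\nabla^E-\nabla^E$ (a smooth section of $T^*M\otimes\operatorname{End}E$) are tensorial. On any tensor bundle $T^{(0,l)}(TM)\otimes E$, the induced connections therefore differ by a zeroth-order operator built from $A$ and $B$ by contractions and tensor products. By induction on $s$, I would show that
\[
\widetilde\nabla^{s}u=\nabla^{s}u+\sum_{t=0}^{s-1}P_{s,t}\bigl(\nabla^{t}u\bigr),
\]
where each $P_{s,t}$ is a smooth bundle map $T^{(0,t)}(TM)\otimes E\to T^{(0,s)}(TM)\otimes E$. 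These maps are universal polynomial contractions in $A$, $B$ and their $\nabla$-derivatives up to order $s-1-t$.

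For the inductive step, write $\widetilde\nabla(\widetilde\nabla^s u)=\nabla(\widetilde\nabla^s u)+\mathcal A(\widetilde\nabla^s u)$, with $\mathcal A$ algebraic. Then substitute the inductive formula and apply the Leibniz rule to $\nabla(P_{s,t}(\nabla^t u))$. This produces $(\nabla P_{s,t})(\nabla^t u)$ and $P_{s,t}$ applied to $\nabla^{t+1}u$, both of the allowed form.

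Compactness of $M$ gives uniform bounds $\sup_M|P_{s,t}|<\infty$ in operator norm with respect to $(g,h_E)$. It follows that $|\widetilde\nabla^s u|_{g,h_E}\le C\sum_{t\le s}|\nabla^t u|_{g,h_E}$ pointwise. Next I would apply Lemma~\ref{lem:metric_equivalence} to the induced fiber metrics on each of the finitely many bundles $T^{(0,s)}(TM)\otimes E$, $s\le m$. This gives $|\cdot|_{\widetilde g,\widetilde h_E}\le C|\cdot|_{g,h_E}$. Finally, comparing the measures on the compact manifold (\cite[Lemma~2]{VelazquezSandoval2026}) and using $(\sum_{t\le s}x_t)^p\le (s+1)^{p-1}\sum x_t^p$, integration yields the upper bound. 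The presence of $\partial M$ causes no difficulty, because all data are smooth up to the boundary and compactness is global.

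The step I expect to be the main obstacle is the bookkeeping in the inductive formula for $\widetilde\nabla^s$. Two points need care. First, one must check that the tensor-difference operator acting on $T^{(0,l)}\otimes E$ indeed involves only $A$ (acting on each covariant slot) and $B$ (acting on the $E$ factor). Second, derivatives falling on the coefficient tensors must stay of bounded order. If this becomes unwieldy, a fallback is to localize with a finite cover by regular coordinate (half-)balls and a partition of unity. One would then use the local formula \eqref{eq:weak_derivative_explicit} and Lemma~\ref{lem:meyers-serrin-local} for each choice of data, together with Remark~\ref{rem:smooth-multipliers}. In that approach, both norms are comparable on each chart to $\sum_a\|F^a\circ\phi^{-1}\|_{W^{m,p}(\Omega_U)}$, which does not depend on the geometric data. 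Summing over the finitely many charts then gives the equivalence directly, even for weak sections.
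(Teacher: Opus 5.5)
Your proof is correct, but you obtain the core estimate for smooth sections by a genuinely different route from the paper's. The paper never compares $\widetilde\nabla^s u$ with $\nabla^s u$ directly. It works in each chart of a finite cover and uses the triangular formula \eqref{eq:jet-triangular} to show that, for either set of data, the map sending the covariant jet $(u,\nabla u,\dots,\nabla^m u)(x)$ to the vector of coordinate partial derivatives $\bigl(\partial^\beta f^a_\alpha(\phi_\alpha(x))\bigr)\in\mathbb R^{rN}$ is a linear isomorphism. Both covariant jets of the same $u$ land on the same data-independent vector. The two positive definite quadratic forms $Q_\alpha,\widetilde Q_\alpha$ on $\mathbb R^{rN}$ are therefore compared by taking the minimum and maximum of their ratio on the compact set $\overline U_\alpha\times\mathbb S^{rN-1}$. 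The passage to weak sections via Theorem~\ref{thm:meyers-serrin-bundle} is then exactly yours.

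You instead expand $\widetilde\nabla^s u=\nabla^s u+\sum_{t<s}P_{s,t}(\nabla^t u)$ intrinsically through the tensorial differences $A$ and $B$. Your induction, with the algebraic correction plus $\nabla(P(v))=(\nabla P)(v)+P(\nabla v)$ on the Hom bundle, is sound. Your count of at most $s-1-t$ derivatives falling on $A$ and $B$ also checks out.

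The paper's route reuses the coordinate formula of Lemma~\ref{lem:connection-components} and packages the whole comparison into one compactness step. It thereby avoids difference tensors and Hom-bundle connections, but its constants are non-explicit and tied to compactness of each $\overline U_\alpha$. Your route is coordinate-free and exhibits the lower-order structure explicitly. Its only inputs are uniform equivalence of the metrics and measures and sup bounds on $\nabla^kA,\nabla^kB$ for $k\le m-1$. Hence it extends, with minor changes, to the noncompact setting of Remark~\ref{rem:noncompact-invariance}, which the paper only states with references.

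Your fallback is also valid and even bypasses the density theorem, since it works directly on weak sections. It applies Lemma~\ref{lem:meyers-serrin-local} for each data set together with Remarks~\ref{rem:smooth-multipliers} and~\ref{rem:zero-extension}. This is the same localization the paper uses to prove Theorem~\ref{thm:sobolev_embedding_bundle}.
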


\begin{proof}
We adapt the finite-dimensional jet argument of \cite[Theorem~2]{VelazquezSandoval2026}. Denote the two sets of geometric data by $(g,h_E,\nabla^E)$ and $(\widetilde g,\widetilde h_E,\widetilde\nabla^E)$. The induced covariant derivatives are denoted by $\nabla$ and $\widetilde\nabla$, and the Sobolev norms by $\|\cdot\|_{m,p}$ and $\|\cdot\|_{m,p}^{\sim}$.

Choose a finite cover by regular coordinate balls and boundary half-balls $U_\alpha$, with coordinates $\phi_\alpha$ and bundle frames extending to neighborhoods of $\overline U_\alpha$. For $u=\displaystyle\sum_a u_\alpha^a e_{a,\alpha}$, put $f_\alpha^a=u_\alpha^a\circ\phi_\alpha^{-1}$. Iterating Lemma~\ref{lem:connection-components} gives, for $1\leq j\leq m$,
\begin{equation}\label{eq:jet-triangular}
 (\nabla^j u)^a_{i_1\ldots i_j}
 =\partial_{i_1}\cdots\partial_{i_j}f_\alpha^a
  +\sum_{b=1}^r\sum_{|\beta|<j}
       B^{ab,\beta}_{\alpha,i_1\ldots i_j}\,
       \partial^\beta f_\alpha^b,
\end{equation}
where the coefficients are smooth and depend on the connection coefficients and their derivatives through order $j-1$. Indeed, differentiating the leading term gives the unique term of order $j+1$ at the next step; differentiating its coefficients and applying the connection terms produces only derivatives of order at most $j$. The same formula holds for $\widetilde\nabla$, with different coefficients.

Let $N=\binom{n+m}{m}$, and consider the bundle
\[
 \mathscr T_m(E):=\bigoplus_{j=0}^m T^{(0,j)}(TM)\otimes E.
\]
For a smooth section $u$, write $J_\nabla^m u=(u,\nabla u,\dots,\nabla^m u)$ and, at each $x\in M$, define the vector space of realizable covariant jets
\[
 \mathscr D_{\nabla,x}(E)
 :=\{J_\nabla^m u(x)\mid u\in\Gamma(E)\}
 \subseteq\mathscr T_m(E)_x.
\]
Fix a chart and frame as above. The component derivatives determine a map
\[
 \begin{aligned}
 F_{\alpha,\nabla,x}\colon\mathscr D_{\nabla,x}(E)
 &\longrightarrow\mathbb R^{rN},\\
 J_\nabla^m u(x)&\longmapsto
 \bigl(\partial^\beta f_\alpha^a(\phi_\alpha(x))\bigr)_{
       1\leq a\leq r,\ |\beta|\leq m}.
 \end{aligned}
\]
We verify that this map is a linear isomorphism. If two sections have the same covariant jet at $x$, their order-zero components agree. Formula~\eqref{eq:jet-triangular} then shows successively that their partial derivatives of orders $1,\dots,m$ agree. Thus the map is well defined. Conversely, if its value is zero, the same formula makes every covariant derivative through order $m$ zero at $x$. Its kernel is therefore trivial, proving injectivity.

For surjectivity, let $v=(v_\beta^a)\in\mathbb R^{rN}$ be arbitrary. In the larger chart on which the chosen frame is defined, choose a smooth cutoff $\chi$ with compact support, equal to one near $x$. The section
\[
 u(q)=\chi(q)\sum_{a=1}^r
       \left(\sum_{|\beta|\leq m}
       \frac{v_\beta^a}{\beta!}
       (\phi_\alpha(q)-\phi_\alpha(x))^\beta\right)e_{a,\alpha}(q),
\]
extended by zero outside the chart, is smooth and satisfies
\[
 \partial^\beta(u_\alpha^a\circ\phi_\alpha^{-1})(\phi_\alpha(x))
 =v_\beta^a,
 \qquad 1\leq a\leq r,\quad |\beta|\leq m.
\]
This also applies when $x\in\partial M$: the polynomial and cutoff are smooth up to the flat boundary, and the support stays inside the relative chart domain. Hence $F_{\alpha,\nabla,x}$ is surjective.

Formula~\eqref{eq:jet-triangular} expresses $F_{\alpha,\nabla,x}^{-1}v$ with coefficients smooth in $x$. In particular, the inverse images of the standard basis of $\mathbb R^{rN}$ form a smooth local frame. Thus $\mathscr D_\nabla(E):=\bigcup_x\mathscr D_{\nabla,x}(E)$ is a smooth subbundle of $\mathscr T_m(E)$ of rank $rN$. The same construction gives $\mathscr D_{\widetilde\nabla}(E)$ and $F_{\alpha,\widetilde\nabla,x}$ for the second set of connections.

For $x\in\overline U_\alpha$ and $v\in\mathbb R^{rN}$, define
\[
 \begin{aligned}
 Q_\alpha(x,v)
   &:=\sum_{j=0}^m
     |(F_{\alpha,\nabla,x}^{-1}v)_j|_{g,h_E}^2,\\
 \widetilde Q_\alpha(x,v)
   &:=\sum_{j=0}^m
     |(F_{\alpha,\widetilde\nabla,x}^{-1}v)_j|_{\widetilde g,\widetilde h_E}^2.
 \end{aligned}
\]
These are continuous in $x$ and positive definite quadratic forms in $v$, since the two maps are isomorphisms. On the compact set $\overline U_\alpha\times\mathbb S^{rN-1}$, where $\mathbb S^{rN-1}$ is the Euclidean unit sphere, the continuous positive function
\[
 (x,v)\longmapsto\frac{\widetilde Q_\alpha(x,v)}{Q_\alpha(x,v)}
\]
has a positive minimum $c_\alpha$ and a finite maximum $C_\alpha$. Homogeneity gives
\[
 c_\alpha Q_\alpha(x,v)\leq\widetilde Q_\alpha(x,v)
 \leq C_\alpha Q_\alpha(x,v),
 \qquad x\in\overline U_\alpha,\quad v\in\mathbb R^{rN}.
\]
For every smooth $u$, both covariant jets correspond to the same vector of partial derivatives:
\[
 \begin{aligned}
 F_{\alpha,\nabla,x}(J_\nabla^m u(x))
 &=\bigl(\partial^\beta f_\alpha^a(\phi_\alpha(x))\bigr)_{a,\beta}\\
 &=F_{\alpha,\widetilde\nabla,x}(J_{\widetilde\nabla}^m u(x)).
 \end{aligned}
\]
Applying the preceding comparison to this vector, and taking the minimum and maximum of the constants over the finite cover, yields
\begin{equation}\label{eq:jet-pointwise-comparison}
 c_m\sum_{j=0}^m|\nabla^j u|_{g,h_E}^2
 \leq\sum_{j=0}^m|\widetilde\nabla^j u|_{\widetilde g,\widetilde h_E}^2
 \leq C_m\sum_{j=0}^m|\nabla^j u|_{g,h_E}^2
 \quad\text{on }M.
\end{equation}

The $\ell^2$ and $\ell^p$ norms on $\mathbb R^{m+1}$ are equivalent. As in the proof of Lemma~\ref{lem:meyers-serrin-local}, the Riemannian--Lebesgue measures $d\lambda_g$ and $d\lambda_{\widetilde g}$ are comparable on the compact manifold $M$ through positive constants \cite[Lemma~2]{VelazquezSandoval2026}. Taking the $\frac{p}{2}$ power in \eqref{eq:jet-pointwise-comparison}, using these comparisons, and integrating gives
\begin{equation}\label{eq:jet-sobolev-comparison}
 c_{m,p}\|u\|_{m,p}
 \leq\|u\|_{m,p}^{\sim}
 \leq C_{m,p}\|u\|_{m,p},
 \qquad u\in\Gamma(E).
\end{equation}

Finally, let $u$ belong to the weak Sobolev space for the first data. By Theorem~\ref{thm:meyers-serrin-bundle}, choose smooth sections $u_\nu\to u$ in that space. Estimate~\eqref{eq:jet-sobolev-comparison} makes $(u_\nu)$ Cauchy in the space for the second data, so it converges there by completeness. The two $L^p$ norms are equivalent, hence this limit is the same measurable section $u$. Passing to the limit proves \eqref{eq:jet-sobolev-comparison} for $u$. Interchanging the data proves the opposite inclusion, and therefore equality of the spaces with equivalent norms.
\end{proof}

\begin{remark}[Uniform hypotheses on a noncompact base]\label{rem:noncompact-invariance}
On a noncompact base, the same conclusion holds if the two base and fiber metrics are uniformly equivalent and the differences $\widetilde\nabla^E-\nabla^E$ and $\widetilde\nabla^{TM}-\nabla^{TM}$ have uniformly bounded covariant derivatives through order $m-1$, measured with the first set of data. No connection bounds are needed for $m=0$. For fixed base and fiber metrics, the corresponding independence under a bounded perturbation of the connection is proved in \cite[Proposition~2.19(ii)]{KohrNistor2022}. Uniform local descriptions under bounded geometry are given in \cite[Theorems~14 and~44]{GrosseSchneider2013}.
\end{remark}

\section{Sobolev Embeddings and the Rellich--Kondrashov Theorem}\label{sec:embeddings_rellich}

The continuous and compact embeddings below are classical. Palais~\cite[Chapter~X, Section~4, Theorems~3--4]{Palais1965} treats the Hilbert scale using Fourier series on the torus and localization to bundles. The scalar $L^p$ theory on compact manifolds is developed in \cite[Theorems~2.20 and~2.34]{Aubin} and \cite[Theorems~2.6 and~2.9]{Hebey1}; corresponding bundle statements appear in \cite[Theorem~10.2.36(c)]{Nicolaescu2020} and \cite[Proposition~16.22]{BleeckerBooss2013}. The two-sided estimates of Lemma~\ref{lem:meyers-serrin-local} transfer the Euclidean estimates to sections in each chart, and a finite partition of unity gives the global embeddings. Regular boundary half-balls allow the same argument when $\partial M\neq\varnothing$.

\begin{theorem}[Sobolev Embedding for Vector Bundles]\label{thm:sobolev_embedding_bundle}
Let $(M,g)$ be a compact Riemannian manifold of dimension $n$ with or without smooth boundary, and let $E\longrightarrow M$ be a smooth vector bundle equipped with a fiber metric $h_{E}$ and a compatible connection $\nabla^{E}$. Then, for any integers $j\geq0$ and $m\geq1$, and real numbers $1\leq p,q<\infty$ satisfying
\[
 \frac{1}{q}\geq\frac{1}{p}-\frac{m}{n},
\]
we have
\[
W^{j+m,p}(E)\subseteq W^{j,q}(E),
\]
and this inclusion is continuous.
\end{theorem}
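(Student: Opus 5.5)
The strategy is to localize with a finite partition of unity, transfer to Euclidean component functions via Lemma~\ref{lem:meyers-serrin-local}, apply the classical Euclidean Sobolev embedding on balls and half-balls, and then transfer back and sum.

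Since $M$ is compact, I will fix a finite cover of $M$ by regular coordinate balls and boundary coordinate half-balls $(U_\alpha,\phi_\alpha)_{\alpha=1}^N$, with bundle frames extending past $\overline U_\alpha$, and a subordinate smooth partition of unity $(\psi_\alpha)$ with $\operatorname{supp}\psi_\alpha\Subset U_\alpha$ (support possibly meeting $\partial M$). For $u\in W^{j+m,p}(E)$, write $u=\sum_\alpha\psi_\alpha u$. By Proposition~\ref{prop:restriction_sobolev} and Remark~\ref{rem:smooth-multipliers}, each $(\psi_\alpha u)|_{U_\alpha}$ lies in $W^{j+m,p}(E|_{U_\alpha})$, with $\|\psi_\alpha u\|_{W^{j+m,p}(E|_{U_\alpha})}\le C_\alpha\|u\|_{W^{j+m,p}(E)}$. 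Lemma~\ref{lem:meyers-serrin-local} (left inequality, at order $j+m$) then gives component functions $f^a_\alpha=(\psi_\alpha u)^a\circ\phi_\alpha^{-1}\in W^{j+m,p}(\Omega_\alpha)$, with norms controlled by $\|u\|_{W^{j+m,p}(E)}$.

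The Euclidean step comes next. $\Omega_\alpha$ is a ball or an open half-ball, and both are bounded Lipschitz domains, so the classical embedding $W^{j+m,p}(\Omega_\alpha)\hookrightarrow W^{j,q}(\Omega_\alpha)$ holds whenever $\frac1q\ge\frac1p-\frac mn$. I obtain it by applying $W^{m,p}\hookrightarrow L^q$ to each $\partial^\beta f^a_\alpha$ with $|\beta|\le j$. This covers the case $mp<n$ with $q\le p^*$ using boundedness of the domain, and the cases $mp\ge n$ for every finite $q$. This is the analytic input, which I take from the literature (e.g.\ Adams--Fournier, or via an extension operator for the half-ball). Equivalently, I could extend by zero using Lemma~\ref{lem:halfball-zero} and an extension across the flat boundary as in the appendix tools, which reduces everything to $\mathbb R^n$ or $\mathbb R^n_+$ with compact support in a fixed ball. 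The result is
\[
\sum_a\|f^a_\alpha\|_{W^{j,q}(\Omega_\alpha)}\le C\sum_a\|f^a_\alpha\|_{W^{j+m,p}(\Omega_\alpha)}.
\]

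Transferring back, the right inequality of Lemma~\ref{lem:meyers-serrin-local} (now with exponent $q$ and order $j$) shows $(\psi_\alpha u)|_{U_\alpha}\in W^{j,q}(E|_{U_\alpha})$ with norm bounded by $C\|u\|_{W^{j+m,p}(E)}$. Since $\psi_\alpha u$ vanishes outside $\operatorname{supp}\psi_\alpha$, Remark~\ref{rem:zero-extension} extends it by zero to an element of $W^{j,q}(E)$ with the same norm. Summing the finitely many pieces and applying the triangle inequality gives $u\in W^{j,q}(E)$ and $\|u\|_{j,q}\le C\|u\|_{j+m,p}$. Weak derivatives are linear, so $\nabla_w^s u=\sum_\alpha\nabla_w^s(\psi_\alpha u)$.

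The main obstacle is the boundary charts. I need the Euclidean embedding on the half-ball, with no vanishing imposed on the flat part, while the support is still kept away from the spherical edge. I expect to handle this by extending by zero across the spherical edge (Lemma~\ref{lem:halfball-zero}) and then applying the half-space tools (Lemma~\ref{lem:halfspace-tools}, e.g.\ a higher-order reflection extension to $\mathbb R^n$). After that, the Gagliardo--Nirenberg--Sobolev and Morrey inequalities on $\mathbb R^n$, for functions supported in a fixed bounded set, give the bound, including the borderline case $mp=n$ with any finite $q$. The remaining steps only require tracking that all constants depend on finitely many charts.
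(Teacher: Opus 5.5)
Your proposal is correct and follows the paper's proof essentially step for step: a finite partition of unity, the two-sided estimates of Lemma~\ref{lem:meyers-serrin-local} in each chart, a localized Euclidean embedding on each component, zero extension via Remark~\ref{rem:zero-extension}, and the multiplier bound of Remark~\ref{rem:smooth-multipliers} to sum the pieces. The differences are small and all in the boundary charts. The paper packages that step as Lemma~\ref{lem:boundary-local-embeddings}: zero extension across the spherical edge, then Stein's extension operator, then the embedding on the smooth ball $B(0,2R)$. You instead use a reflection, or appeal directly to the half-ball as a bounded Lipschitz domain, which also works. One citation correction: Lemma~\ref{lem:halfspace-tools} is a density result, not an extension tool.
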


\begin{proof}
Fix $j,m,p,q$ as in the statement. Consider a finite open cover $(U_\alpha,\phi_\alpha)_{\alpha=1}^N$ of $M$ consisting of regular coordinate balls and boundary coordinate half-balls, each being the domain of a local frame $(e_{1,\alpha},\dots,e_{r,\alpha})$. Let $(\psi_\alpha)_{\alpha=1}^N$ be a subordinate smooth partition of unity with $\operatorname{supp}\psi_\alpha\Subset U_\alpha$, and set
\[
 \Omega_\alpha:=\phi_\alpha(U_\alpha\cap\operatorname{int}(M)),
 \qquad K_\alpha:=\phi_\alpha(\operatorname{supp}\psi_\alpha).
\]
For an interior chart, $\Omega_\alpha$ is a bounded smooth ball. Applying the scalar Sobolev embeddings \cite[Theorem~1.2.26 and Remark~1.2.27, pp.~39--40]{DrabekMilota2007} to the weak derivatives of orders at most $j$ gives $W^{j+m,p}(\Omega_\alpha)\hookrightarrow W^{j,q}(\Omega_\alpha)$ when $q\geq p$; for $q<p$, H\"older's inequality gives the same continuous inclusion. For a boundary chart, we apply Lemma~\ref{lem:boundary-local-embeddings} with $\ell=j+m$ and $k=j$ to functions vanishing outside $K_\alpha$. Thus, in either type of chart there exists $C_\alpha>0$ such that
\begin{equation}\label{eq:embedding_local}
 \|v\|_{W^{j,q}(\Omega_\alpha)}
 \leq C_\alpha\|v\|_{W^{j+m,p}(\Omega_\alpha)}
\end{equation}
for every $v\in W^{j+m,p}(\Omega_\alpha)$ vanishing almost everywhere outside $K_\alpha$. The constants may depend on the chart and $K_\alpha$, but not on $v$.

Let $F\in W^{j+m,p}(E)$ and assume that $F=F^{a}_{\alpha}e_{a,\alpha}$ in $U_{\alpha}$. By Remark~\ref{rem:smooth-multipliers} and Proposition~\ref{prop:restriction_sobolev}, $(\psi_{\alpha}F)|_{U_{\alpha}}\in W^{j+m,p}(E|_{U_{\alpha}})$. By Lemma \ref{lem:meyers-serrin-local}, we know that for all $a\in \{1,\dots,r\}$ and all $\alpha\in \{1,\dots,N\}$, the function $(\psi_{\alpha} F^{a}_{\alpha})\circ \phi_{\alpha}^{-1}$ belongs to $W^{j+m,p}(\Omega_\alpha)$ and vanishes almost everywhere outside $K_\alpha$. Consequently, by \eqref{eq:embedding_local}, it also belongs to $W^{j,q}(\Omega_\alpha)$ and satisfies
\[
\|\psi_{\alpha}F^{a}_{\alpha}\circ \phi_{\alpha}^{-1}\|_{W^{j,q}(\Omega_\alpha)}
\leq C_{\alpha}\|\psi_{\alpha}F^{a}_{\alpha}\circ \phi_{\alpha}^{-1}\|_{W^{j+m,p}(\Omega_\alpha)}.
\]
Applying Lemma \ref{lem:meyers-serrin-local} again, since the component functions are in $W^{j,q}$, we conclude that $(\psi_{\alpha}F)|_{U_{\alpha}}\in W^{j,q}(E|_{U_\alpha})$.
Furthermore, there exist constants $K_{\alpha,j,m}, K_{\alpha,j}>0$ such that
\[
\|\psi_{\alpha}F\|_{W^{j,q}(E|_{U_{\alpha}})} \leq K_{\alpha,j}\sum_{a=1}^{r}\|\psi_{\alpha}F^{a}_{\alpha}\circ\phi^{-1}_{\alpha}\|_{W^{j,q}(\Omega_\alpha)},
\]
and
\[
K_{\alpha,j,m}\sum_{a=1}^{r}\|\psi_{\alpha}F^{a}_{\alpha}\circ\phi_{\alpha}^{-1}\|_{W^{j+m,p}(\Omega_\alpha)} \leq \|\psi_{\alpha}F\|_{W^{j+m,p}(E|_{U_{\alpha}})}.
\]

For each $\alpha\in\{1,\dots,N\}$, the section $(\psi_{\alpha}F)|_{U_{\alpha}}$ vanishes almost everywhere outside the compact set $\operatorname{supp}\psi_{\alpha}\Subset U_{\alpha}$. Its extension by zero is $\psi_{\alpha}F$. Applying Remark~\ref{rem:zero-extension} with Sobolev order $j$ and exponent $q$, we obtain $\psi_{\alpha}F\in W^{j,q}(E)$, whose weak derivatives of orders $0\leq s\leq j$ are the zero extensions of the corresponding local weak derivatives. The local and global $W^{j,q}$ norms coincide. Since $F=\displaystyle\sum_{\alpha=1}^{N}\psi_{\alpha}F$, it follows that $F\in W^{j,q}(E)$.

Finally, we estimate the norm:
\begin{align*}
\|F\|_{W^{j,q}(E)} &= \left\|\sum_{\alpha=1}^{N}\psi_{\alpha}F\right\|_{W^{j,q}(E)} \leq \sum_{\alpha=1}^{N}\|\psi_{\alpha}F\|_{W^{j,q}(E)} = \sum_{\alpha=1}^{N}\|\psi_{\alpha}F\|_{W^{j,q}(E|_{U_{\alpha}})} \\
&\leq \sum_{\alpha=1}^{N}\sum_{a=1}^{r}K_{\alpha,j}\|\psi_{\alpha}F^{a}_{\alpha}\circ\phi_{\alpha}^{-1}\|_{W^{j,q}(\Omega_\alpha)} \\
&\leq \sum_{\alpha=1}^{N}\sum_{a=1}^{r}K_{\alpha,j}C_{\alpha}\|\psi_{\alpha}F^{a}_{\alpha}\circ\phi_{\alpha}^{-1}\|_{W^{j+m,p}(\Omega_\alpha)}.
\end{align*}
Defining $K:=\displaystyle\max_{1\leq \alpha \leq N}\{K_{\alpha,j}C_{\alpha}\}$ and using the lower bound for the local Sobolev norm:
\[
\|F\|_{W^{j,q}(E)} \leq K \sum_{\alpha=1}^{N}\frac{1}{K_{\alpha,j,m}}\|\psi_{\alpha}F\|_{W^{j+m,p}(E|_{U_{\alpha}})} \leq C\sum_{\alpha=1}^{N}\|\psi_{\alpha}F\|_{W^{j+m,p}(E)},
\]
where $C$ is a constant independent of $F$.

To conclude continuity for the original section $F\in W^{j+m,p}(E)$, we use the weak product estimate of Remark~\ref{rem:smooth-multipliers}:
\[
 |\nabla_w^{s}(\psi_{\alpha}F)|_{g,h_{E}}
 \leq \sum_{t=0}^{s}\binom{s}{t}|\nabla^{s-t}\psi_\alpha|_{g}
       |\nabla_w^{t}F|_{g,h_{E}},\qquad 0\leq s\leq j+m.
\]
All derivatives of the finitely many $\psi_\alpha$ through order $j+m$ are bounded on $M$. Taking $L^p$ norms and summing the derivative orders gives a constant $C'>0$, independent of $F$ and $\alpha$, such that
\begin{equation}\label{eq:partition_bound}
 \|\psi_{\alpha}F\|_{W^{j+m,p}(E)}\leq C'\|F\|_{W^{j+m,p}(E)},
 \qquad \alpha\in\{1,\dots,N\}.
\end{equation}
Substituting this bound into the preceding estimate yields
\[
 \|F\|_{W^{j,q}(E)}\leq NCC'\|F\|_{W^{j+m,p}(E)},
\]
which proves continuity.
\end{proof}

We also state and prove the Rellich--Kondrashov compactness theorem for vector bundles.

\begin{theorem}[Rellich--Kondrashov for Vector Bundles]\label{thm:rellich_kondrashov_bundle}
Let $(M,g)$ be a compact Riemannian manifold of dimension $n$ with or without smooth boundary, and let $E\longrightarrow M$ be a smooth vector bundle with fiber metric $h_{E}$ and compatible connection $\nabla^{E}$. Then, for any integers $j\geq0$ and $m\geq1$, and real numbers $1\leq p,q<\infty$ satisfying
\[
 \frac{1}{q}>\frac{1}{p}-\frac{m}{n},
\]
the inclusion
\[
W^{j+m,p}(E)\subseteq W^{j,q}(E)
\]
is compact. In particular, $W^{k,p}(E)\subseteq W^{k-1,p}(E)$ is compact for every integer $k\geq1$ and $1\leq p<\infty$.
\end{theorem}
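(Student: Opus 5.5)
The plan follows the proof of Theorem~\ref{thm:sobolev_embedding_bundle} step by step, with the scalar continuous embedding replaced by the scalar Rellich--Kondrashov theorem. Fix $j,m,p,q$ with $\frac1q>\frac1p-\frac mn$. Take the same finite cover $(U_\alpha,\phi_\alpha)_{\alpha=1}^N$ by regular coordinate balls and boundary half-balls, the frames $(e_{a,\alpha})$, and a subordinate partition of unity $(\psi_\alpha)$. Set $\Omega_\alpha=\phi_\alpha(U_\alpha\cap\operatorname{int}(M))$ and $K_\alpha=\phi_\alpha(\operatorname{supp}\psi_\alpha)$. The local analytic input is this: every bounded sequence in $W^{j+m,p}(\Omega_\alpha)$ of functions vanishing a.e.\ outside $K_\alpha$ has a subsequence converging in $W^{j,q}(\Omega_\alpha)$.

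For an interior chart, $\Omega_\alpha$ is a smooth bounded ball. The scalar Rellich--Kondrashov theorem on Lipschitz bounded domains gives compactness of $W^{m,p}(\Omega_\alpha)\hookrightarrow L^q(\Omega_\alpha)$ under the strict inequality. Applying it to each derivative $\partial^\beta v$ with $|\beta|\le j$, which is bounded in $W^{m,p}$, and extracting diagonally over the finitely many $\beta$, gives compactness into $W^{j,q}$. If $q\le p$, compactness into $L^p$ and H\"older's inequality on the bounded domain suffice. For a boundary chart, I would first extend the functions by zero to $\mathbb R^n_+$ using Lemma~\ref{lem:halfball-zero}. Their supports lie in a fixed compact subset of $B(0,R_\alpha)\cap\{x_n\ge0\}$. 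I would then use the appendix tools (Lemma~\ref{lem:halfspace-tools} / Lemma~\ref{lem:boundary-local-embeddings}), i.e.\ an extension operator across $\{x_n=0\}$, to reduce to a bounded set in $W^{j+m,p}(\mathbb R^n)$ supported in a fixed ball. There the Euclidean compactness theorem applies. I expect this boundary case to be the main obstacle. If the appendix only states continuous estimates, I would use reflection-type extension, which is bounded $W^{j+m,p}(\mathbb R^n_+)\to W^{j+m,p}(\mathbb R^n)$, multiply by a fixed cutoff, and then invoke the scalar theorem on a ball.

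For the global argument, let $(F_k)$ be bounded in $W^{j+m,p}(E)$. By Remark~\ref{rem:smooth-multipliers} and estimate~\eqref{eq:partition_bound}, each sequence $(\psi_\alpha F_k)_k$ is bounded in $W^{j+m,p}(E)$. By Proposition~\ref{prop:restriction_sobolev} and the lower bound in Lemma~\ref{lem:meyers-serrin-local}, the component functions $(\psi_\alpha F_k)^a\circ\phi_\alpha^{-1}$ are bounded in $W^{j+m,p}(\Omega_\alpha)$ and vanish outside $K_\alpha$. Passing to successive subsequences over the finitely many pairs $(\alpha,a)$ gives a single subsequence along which every component converges in $W^{j,q}(\Omega_\alpha)$. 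In particular each component sequence is Cauchy.

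Finally, the upper bound in Lemma~\ref{lem:meyers-serrin-local} shows that $(\psi_\alpha F_k)|_{U_\alpha}$ is Cauchy in $W^{j,q}(E|_{U_\alpha})$. The differences vanish outside $\operatorname{supp}\psi_\alpha$, so Remark~\ref{rem:zero-extension} makes $(\psi_\alpha F_k)$ Cauchy in $W^{j,q}(E)$ with the same norm. Summing over $\alpha$, the sequence $F_k=\sum_\alpha\psi_\alpha F_k$ is Cauchy in $W^{j,q}(E)$, which is complete, so it converges. This proves the compactness of the inclusion. The special case $q=p$, $m=1$, $j=k-1$ satisfies $\frac1p>\frac1p-\frac1n$ and gives the compactness of $W^{k,p}(E)\subseteq W^{k-1,p}(E)$.
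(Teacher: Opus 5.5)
Your proposal is correct and follows essentially the same route as the paper. Both localize with the finite partition of unity, use Lemma~\ref{lem:meyers-serrin-local} and \eqref{eq:partition_bound} to bound the component functions in $W^{j+m,p}(\Omega_\alpha)$, apply the scalar Rellich--Kondrashov theorem in interior charts and the compactness part of Lemma~\ref{lem:boundary-local-embeddings} in boundary charts, extract successive subsequences, and reassemble a Cauchy sequence in $W^{j,q}(E)$ via Remark~\ref{rem:zero-extension}. Your fallback reflection argument is unnecessary, because Lemma~\ref{lem:boundary-local-embeddings} already states the compactness assertion for functions vanishing outside a fixed compact set $K$.
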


\begin{proof}
Fix $j,m,p,q$ as in the statement. Consider the same finite cover $(U_{\alpha},\phi_{\alpha})_{\alpha=1}^{N}$ and partition of unity $(\psi_{\alpha})_{\alpha=1}^{N}$ as before. Theorem~\ref{thm:sobolev_embedding_bundle} gives the continuous embedding $W^{j+m,p}(E)\subseteq W^{j,q}(E)$.

Let $(F_{k})_{k\in \mathbb{N}}\subseteq W^{j+m,p}(E)$ be a bounded sequence; that is, there exists $C_{0}>0$ such that $\|F_{k}\|_{W^{j+m,p}(E)}\leq C_{0}$ for all $k$.

For each $k\in \mathbb{N}$ and each $\alpha \in \{1,\dots,N\}$, we have $(\psi_{\alpha}F_{k})|_{U_{\alpha}}\in W^{j+m,p}(E|_{U_{\alpha}})$ and $(\psi_{\alpha}F_{k})|_{U_{\alpha}}\in W^{j,q}(E|_{U_{\alpha}})$. By Lemma~\ref{lem:meyers-serrin-local}, there exist constants $K_{\alpha,j,m} > 0$ and $K_{\alpha,j} > 0$ such that the following norm estimates hold:
\begin{equation}\label{eq:norm_equiv_rellich}
\begin{split}
K_{\alpha,j,m} \sum_{a=1}^{r} \|\psi_{\alpha}(F_{k})_{\alpha}^{a}\circ\phi_{\alpha}^{-1}\|_{W^{j+m,p}(\Omega_\alpha)} &\leq \|\psi_{\alpha}F_{k}\|_{W^{j+m,p}(E|_{U_{\alpha}})}, \\
\|\psi_{\alpha}F_{k}\|_{W^{j,q}(E|_{U_{\alpha}})} &\leq K_{\alpha,j} \sum_{a=1}^{r} \|\psi_{\alpha}(F_{k})_{\alpha}^{a}\circ\phi_{\alpha}^{-1}\|_{W^{j,q}(\Omega_\alpha)}.
\end{split}
\end{equation}

By combining the first inequality of \eqref{eq:norm_equiv_rellich} with the estimate \eqref{eq:partition_bound}, it follows that for each $\alpha\in \{1,\dots,N\}$ and  each component index $a \in \{1,\dots,r\}$, the sequence of functions $\left( \psi_{\alpha}(F_{k})_{\alpha}^{a}\circ\phi_{\alpha}^{-1} \right)_{k\in\mathbb{N}}$ is bounded in $W^{j+m,p}(\Omega_\alpha)$:
\[
\|\psi_{\alpha}(F_{k})_{\alpha}^{a}\circ\phi_{\alpha}^{-1}\|_{W^{j+m,p}(\Omega_\alpha)}
\leq \frac{1}{K_{\alpha,j,m}}\|\psi_{\alpha}F_{k}\|_{W^{j+m,p}(E)}
\leq \frac{C' C_{0}}{K_{\alpha,j,m}}.
\]
For an interior chart, the scalar Rellich--Kondrashov theorem \cite[Theorem~1.2.28, p.~40]{DrabekMilota2007} applies to each weak derivative of order at most $j$, which forms a bounded sequence in $W^{m,p}(\Omega_\alpha)$. Successive extraction over these finitely many derivatives gives a subsequence converging in $W^{j,q}(\Omega_\alpha)$ for each localized component. For a boundary chart, all functions $\bigl(\psi_\alpha(F_k)_\alpha^a\bigr)\circ\phi_\alpha^{-1}$ vanish almost everywhere outside the same compact set $K_\alpha=\phi_\alpha(\operatorname{supp}\psi_\alpha)$. The strict inequality in the statement allows us to apply the compactness assertion of Lemma~\ref{lem:boundary-local-embeddings}, with $\ell=j+m$ and $k=j$. Thus each component has a subsequence converging in $W^{j,q}(\Omega_\alpha)$. There are finitely many charts $\alpha\in\{1,\dots,N\}$ and finitely many fiber components
$a\in\{1,\dots,r\}$, so we may perform successive subsequence extractions.
More precisely, by iteratively refining the sequence $(F_k)$ a finite number of times,
we obtain a subsequence (still denoted by $(F_k)$) such that, for each chart
$\alpha\in\{1,\dots,N\}$ and each fiber component $a\in\{1,\dots,r\}$, the sequence $\psi_{\alpha}(F_{k})_{\alpha}^{a}\circ\phi_{\alpha}^{-1}$ converges —and hence is Cauchy— in $W^{j,q}(\Omega_\alpha)$.

Let $\varepsilon>0$. Since the subsequence is Cauchy component-wise, there exists $N'\in \mathbb{N}$ such that for all $k,l\geq N'$,
\[
\sum_{a=1}^{r}\|\psi_{\alpha}(F_{k}-F_{l})_{\alpha}^{a}\circ\phi^{-1}_{\alpha}\|_{W^{j,q}(\Omega_\alpha)} < \frac{\varepsilon}{N K_{\alpha,j}}, \qquad \forall\alpha\in\{1,\dots,N\}.
\]
Then, using Remark~\ref{rem:zero-extension} to identify the local and global norms of the localized differences, we obtain:

\[\|F_{k}-F_{l}\|_{W^{j,q}(E)} = \left\|\sum_{\alpha=1}^{N}\psi_{\alpha}(F_{k}-F_{l})\right\|_{W^{j,q}(E)} \leq \sum_{\alpha =1}^{N}\|\psi_{\alpha}(F_{k}-F_{l})\|_{W^{j,q}(E)}\] \[\leq \sum_{\alpha=1}^{N} K_{\alpha,j}\sum_{a=1}^{r}\|\psi_{\alpha}(F_{k}-F_{l})_{\alpha}^{a}\circ\phi_{\alpha}^{-1}\|_{W^{j,q}(\Omega_\alpha)} < \sum_{\alpha=1}^{N}\frac{\varepsilon}{N} = \varepsilon.\]
Thus, $(F_{k})_{k\in \mathbb{N}}$ is a Cauchy sequence in the Banach space $W^{j,q}(E)$, and therefore converges. We conclude that the embedding is compact.
\end{proof}
\begin{remark}[Noncompact manifolds]\label{rem:noncompact-embeddings}
For a manifold without boundary, bounded geometry means a positive injectivity radius and uniform bounds on $\nabla^k R^M$ for every $k\geq0$; for the bundle one requires the corresponding bounds on $\nabla^k R^E$. These are the hypotheses of \cite[Definitions~3.1 and~3.3]{GrosseKohrNistor2026}. Under the index conditions stated in their Proposition~3.9, Gro\ss e, Kohr, and Nistor obtain global continuous Sobolev embeddings in part~(i) and a Rellich--Kondrashov--Lions interpolation estimate in part~(ii), for exponents strictly between $1$ and $\infty$. Their Proposition~3.8 supplies the uniform localization used in the proof. These are global norm estimates; bounded geometry alone does not imply global compactness.

For smooth boundary, the bounded-geometry definition and its intrinsic characterization are given in \cite[Definition~2.5 and Theorem~2.10]{AmmannGrosseNistor2019}. Proposition~2.32 there gives the equivalent localized $H^k$ norm in Fermi and interior charts, which is used in their analysis of the Laplacian.
\end{remark}

\section{Applications: Green's Formula and Norm Equivalence}\label{sec:applications}

In the Euclidean setting, it is well known that the integration by parts formula is a fundamental pillar of Sobolev space theory and nonlinear analysis, particularly in the study of partial differential equations (PDEs). A classic application of this formula is establishing the equivalence between the standard Sobolev norm and a structured norm based on powers of the Laplacian. For a bounded domain $\Omega \subset \mathbb{R}^n$ and any function $u \in H_0^m(\Omega)$, the standard Sobolev norm is given by
\begin{equation}\label{eq: euclidean sobolev equivalence} \|u\|_{H^m(\Omega)}^2=\sum_{j=0}^m\int_\Omega|\nabla^ju|^2\,d\lambda_n=\sum_{\substack{j=0\\j\text{ even}}}^m\int_\Omega|\Delta^{\frac{j}{2}}u|^2\,d\lambda_n+\sum_{\substack{j=1\\j\text{ odd}}}^m
       \int_\Omega|\nabla\Delta^{\frac{j-1}{2}}u|^2\,d\lambda_n,
\end{equation}
where $\Delta = -\displaystyle\sum_{i=1}^n \partial_i^2$ is the standard Euclidean Laplacian. These norms and their counterparts on Riemannian manifolds are widely used in the geometric PDE literature; see for example \cite{Gazzola2010}, \cite{Robert2011}, \cite{FPT2024}. In this section, we use the integration by parts formula (Theorem~\ref{thm:green-higher-order}) to generalize Green's identity and subsequently give an explicit, purely geometric proof of this norm equivalence on vector bundles over a closed manifold, accounting for the intrinsic curvature. Finally, as a corollary, we recover the well-known norm equivalence for the Laplace--Beltrami operator on closed manifolds.

\subsection{Green's Formula for the Bochner Laplacian}

An immediate consequence of our main theorem is the standard Green's formula in the context of vector bundles.

\begin{corollary}[Green's Formula for the Bochner Laplacian]\label{cor:green-bochner}
Let $(M,g)$ be a Riemannian manifold with boundary $\partial M$, and let $\pi_E:E\to M$ be a smooth vector bundle equipped with a fiber metric $h_E$ and a compatible connection $\nabla^{E}$. Let $\nu$ denote the outward unit normal to $\partial M$.

Then, for any $u,v\in \Gamma_{c}(E)$, the following identity holds:
\begin{equation*}
\displaystyle\int_{M}\langle \nabla^{E}u,\nabla^{E}v\rangle_{g,h_E}\,d\lambda_g
=
\displaystyle\int_{M}\langle u,\Delta_{B}v\rangle_{h_E}\,d\lambda_g
+
\displaystyle\int_{\partial M}\big\langle u,\nabla^{E}_{\nu}v\big\rangle_{h_E}\,d\lambda_{\widetilde g},
\end{equation*}
where the Bochner Laplacian is the formal differential expression $\Delta_{B} := \nabla^{E*}\nabla^{E}=-\operatorname{tr}_g\nabla^2$. In particular, if $\partial M = \varnothing$, then
\begin{equation*}
\displaystyle\int_{M}\langle \nabla^{E}u,\nabla^{E}v\rangle_{g,h_E}\,d\lambda_g
=
\displaystyle\int_{M}\langle u,\Delta_{B}v\rangle_{h_E}\,d\lambda_g.
\end{equation*}
\end{corollary}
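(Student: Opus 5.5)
The plan is to specialize Theorem~\ref{thm:green-higher-order} to $s=1$, $k=l=0$, with the roles of $F$ and $G$ chosen so that the derivative lands on $v$. Take $F:=u\in\Gamma_c(E)$ and $G:=\nabla^E v\in\Gamma_c(T^{(0,1)}(TM)\otimes E)$. The latter is compactly supported because $\operatorname{supp}\nabla v\subseteq\operatorname{supp}v$. The case $s=1$ of the theorem then reads
\[
\int_M\langle\nabla u,\nabla v\rangle_{g,h_E}\,d\lambda_g=-\int_M\langle u,\operatorname{tr}_g(\nabla\nabla v)\rangle_{h_E}\,d\lambda_g+\int_{\partial M}\langle u,\iota_\nu\nabla v\rangle_{h_E}\,d\lambda_{\widetilde g}.
\]
Here the left side uses $\nabla u=\nabla^E u$, since $u$ is a section of $E$ itself.

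Next I identify each term with the claimed expression.

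\textbf{Interior term.} By definition, $\operatorname{tr}_g(\nabla(\nabla v))$ contracts the derivative index of $\nabla^2 v$ with the covariant index inherited from $\nabla v$. This is exactly $\operatorname{tr}_g\nabla^2 v$, so the interior term equals $\int_M\langle u,\Delta_B v\rangle\,d\lambda_g$ with $\Delta_B=-\operatorname{tr}_g\nabla^2$. To justify the notation $\Delta_B=\nabla^{E*}\nabla^E$, I invoke Corollary~\ref{cor:local_adjoint_expression} with $s=1$, which gives $(\nabla)^*=-\operatorname{tr}_g\circ\nabla$. Hence $\nabla^*\nabla v=-\operatorname{tr}_g\nabla^2 v$, and the two expressions agree as formal differential expressions.

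\textbf{Boundary term.} The contraction $\iota_\nu$ inserts $\nu$ into the last (here the only) covariant slot of $\nabla v$. By Remark~\ref{rem:total-cov-derivative}, that slot is the differentiation index, so in coordinates $(\iota_\nu\nabla v)^a=\nu^q(\nabla v)^a_q=(\nabla_\nu v)^a$. Since $\nu$ is defined along $\partial M$ and $\nabla v$ is tensorial in the direction slot, this holds pointwise on $\partial M$ without extending $\nu$. The boundary integral is therefore $\int_{\partial M}\langle u,\nabla^E_\nu v\rangle_{h_E}\,d\lambda_{\widetilde g}$. When $\partial M=\varnothing$ this term is absent, which the theorem explicitly allows, giving the second identity.

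There is no real obstacle. The only point needing care is index bookkeeping: checking that the contraction convention in the definition of $\operatorname{tr}_g$ and the last-index convention for $\iota_\nu$ reproduce, respectively, the trace of the Hessian and $\nabla_\nu v$. With $\nabla v$ having a single covariant index, both checks are immediate.
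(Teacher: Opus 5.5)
Your proposal is correct and follows the paper's proof essentially verbatim. Like the paper, you specialize Theorem~\ref{thm:green-higher-order} to $s=1$, $k=l=0$ with $F=u$ and $G=\nabla^E v$, read off the interior term via $\Delta_B=-(\operatorname{tr}_g\circ\nabla)\circ\nabla^E$, and identify $\iota_\nu(\nabla^E v)=\nu^i\nabla^E_{\partial_i}v=\nabla^E_\nu v$ by linearity in the derivative slot; your extra appeal to Corollary~\ref{cor:local_adjoint_expression} to reconcile $\nabla^{E*}\nabla^E$ with $-\operatorname{tr}_g\nabla^2$ is a harmless addition, whereas the paper treats this as a convention.
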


\begin{proof}
We apply Theorem~\ref{thm:green-higher-order} with $s=1$ and $k=l=0$, setting
\[
F=u\in\Gamma_c(E), \quad \text{and} \quad G=\nabla^{E}v\in \Gamma_c(T^{*}M\otimes E).
\]
The identity from the theorem then takes the form:
\begin{equation}\label{eq:proof-step}
\displaystyle\int_{M}\langle \nabla^{E}u,\nabla^{E}v\rangle_{g,h_E}\,d\lambda_g
=
-\displaystyle\int_{M}\big\langle u,(\operatorname{tr}_{g}\circ \nabla)(\nabla^{E}v)\big\rangle_{h_E}\,d\lambda_g
+
\displaystyle\int_{\partial M}\Big\langle u,\,\iota_{\nu}(\nabla^{E}v)\Big\rangle_{h_E}\,d\lambda_{\widetilde{g}}.
\end{equation}
Under the sign convention adopted for the Bochner Laplacian, we have
\[
\Delta_{B} = -(\operatorname{tr}_{g}\circ \nabla) \circ \nabla^{E},
\]
and therefore, the interior term can be rewritten as
\[
-\big\langle u,(\operatorname{tr}_{g}\circ \nabla)(\nabla^{E}v)\big\rangle_{h_E}
=
\langle u,\Delta_{B}v\rangle_{h_E}.
\]

It remains to identify the boundary term. In local coordinates $(x^{1},\dots,x^{n})$, we write the normal vector as $\nu = \nu^{i}\partial_i$. By the definition of the interior multiplication $\iota_{\nu}$ (acting on the last covariant index), we have $\iota_{\nu}(\nabla^{E}v) = \nu^{i}(\nabla^{E}v)_i$. Furthermore, since $(\nabla^{E}v)_i = \nabla^{E}_{\partial_i}v$, we obtain $\iota_{\nu}(\nabla^{E}v) = \nu^{i}\nabla^{E}_{\partial_i}v$. Finally, using the $C^{\infty}(M)$-linearity of the connection with respect to the vector field, it follows that
\[
\nabla^{E}_{\nu}v = \nabla^{E}_{\nu^{i}\partial_i}v = \nu^{i}\nabla^{E}_{\partial_i}v,
\]
which implies $\iota_{\nu}(\nabla^{E}v) = \nabla^{E}_{\nu}v$. Substituting these results into \eqref{eq:proof-step} yields the desired formula.
\end{proof}

\subsection{Technical Preliminaries and the Commutator Lemma}

To handle the contractions produced by commuting covariant derivatives, we make the schematic $*$ notation precise. All tensor bundles in this subsection carry the metrics and connections induced by $g$, $h_E$, the Levi--Civita connection, and $\nabla^E$. In particular, the connection on a dual bundle is the dual connection. Appendix~\ref{app:star-properties} gives the precise meaning of a parallel bundle homomorphism and proves the properties of the tensor operations used here.

\begin{notation}\label{notacion estrella}
Let $G_1,\dots,G_r,H$ be bundles obtained from $TM,T^*M,E,E^*$ by finitely many tensor products. For $u_i\in\Gamma(G_i)$, a term of the form $u_1*\cdots*u_r\in\Gamma(H)$ means
\[
 T=\Phi(u_1\otimes\cdots\otimes u_r),\qquad
 \Phi:G_1\otimes\cdots\otimes G_r\longrightarrow H,
\]
where $\Phi$ is obtained by composing tensor-factor permutations, contractions between a bundle and its dual, and the identifications supplied by $g,h_E$ and their inverses, and by taking finite linear combinations with constant real coefficients. All contractions must have matching tensor types and the stated target bundle $H$.

The symbol does not denote a single product: different occurrences may represent different maps $\Phi$ or different finite sums, including the zero sum. In a schematic identity, it asserts the existence of such fixed contractions and coefficients, rather than an identity for every possible choice of $\Phi$. Variable coefficients, when present, must be displayed as additional tensor factors.
\end{notation}

For example, for $R\in\Gamma(T^{(0,4)}(TM))$ and $\omega\in\Gamma(T^*M)$, the formula
\[
 \Phi(A)_b=g^{ac}g^{de}A_{abcde},\qquad
 \Phi(R\otimes\omega)_b=g^{ac}g^{de}R_{abcd}\omega_e
\]
defines a term $R*\omega\in\Gamma(T^*M)$. It first identifies the tensor product with a covariant five-tensor and then contracts the first and third slots and the fourth and fifth slots. As another example, for the curvature $R^E$ and a section $u$, the expression
\[
 X\longmapsto\sum_{i=1}^n R^E(e_i,X)\nabla^E_{e_i}u
\]
is a term $R^E*\nabla u\in\Gamma(T^*M\otimes E)$, where $(e_i)$ is any orthonormal frame of $TM$. Here the $E^*$ factor of $\operatorname{End}(E)=E\otimes E^*$ is contracted with the $E$ factor of $\nabla u$, and a covariant curvature slot is contracted with the derivative slot using $g^{-1}$.

Two properties of this notation will be used. First, for every compact subset $K\subseteq M$ and each fixed $\Phi$, there is a constant $C_{K,\Phi}>0$ such that
\begin{equation}\label{eq:star-bound}
 |\Phi(u_1\otimes\cdots\otimes u_r)(x)|
 \leq C_{K,\Phi}|u_1(x)|\cdots|u_r(x)|,
 \qquad x\in K.
\end{equation}
Proposition~\ref{prop:star-compact-bound} gives the proof. In the norm estimates below, we apply it with $K=M$.

Second, these natural homomorphisms are parallel: they commute with covariant differentiation for the induced connections. Proposition~\ref{prop:star-leibniz} establishes the resulting Leibniz rule
\[
 \nabla_X\Phi(u_1\otimes\cdots\otimes u_r)
 =\sum_{i=1}^r
   \Phi(u_1\otimes\cdots\otimes\nabla_Xu_i\otimes\cdots\otimes u_r).
\]
The same fixed map $\Phi$ occurs in every summand. This differential identity does not require compactness.

We use the curvature convention
\[
 R^F(X,Y)v=\nabla^F_X\nabla^F_Yv-\nabla^F_Y\nabla^F_Xv
              -\nabla^F_{[X,Y]}v
\]
for any bundle $F$ with a connection. For the Levi--Civita curvature $R^M$, define the Ricci tensor by
\[
 \operatorname{Ric}(X,Y)
 =\sum_{i=1}^n g\bigl(R^M(e_i,X)Y,e_i\bigr),
\]
where $(e_1,\dots,e_n)$ is any orthonormal basis. The Ricci endomorphism $\operatorname{Ric}^{\sharp}:TM\to TM$ is obtained by raising an index with $g$; explicitly, it is characterized by
\[
 g(\operatorname{Ric}^{\sharp}X,Y)=\operatorname{Ric}(X,Y)
 \qquad\text{for all }X,Y.
\]
In coordinates, $(\operatorname{Ric}^{\sharp}X)^i=\displaystyle\sum_{j,k=1}^n g^{ik}\operatorname{Ric}_{jk}X^j$. Skew-adjointness of $R^M(e_i,X)$ gives
\[
 g\left(-\sum_{i=1}^nR^M(e_i,X)e_i,Y\right)
 =\sum_{i=1}^n g\bigl(R^M(e_i,X)Y,e_i\bigr)
 =\operatorname{Ric}(X,Y),
\]
so that $\operatorname{Ric}^{\sharp}X=-\displaystyle\sum_{i=1}^nR^M(e_i,X)e_i$ with our curvature convention.

Put $E_s=T^{(0,s)}(TM)\otimes E$, and denote its induced connection by $\nabla^{E_s}$. We identify the total covariant derivative of a section of $E_s$ with a section of $E_{s+1}$ by placing the new derivative in the last covariant slot. The Bochner Laplacian on $E_s$ is
\[
 \Delta_B^{(s)}=(\nabla^{E_s})^*\nabla^{E_s}
              =-\operatorname{tr}_g\nabla^2,
\]
where $\nabla^2$ is the total second covariant derivative on $E_s$, and the trace contracts its two derivative slots. The superscript $(s)$ specifies the bundle, whereas an unparenthesized exponent denotes an operator power. We retain $\Delta_B=\Delta_B^{(0)}$ on $E$. The curvature on $E_s$ acts by
\begin{align*}
 &(R^{E_s}(X,Y)v)(Z_1,\dots,Z_s)\\
 &\quad=R^E(X,Y)\bigl(v(Z_1,\dots,Z_s)\bigr)
  -\sum_{\alpha=1}^s
       v(Z_1,\dots,R^M(X,Y)Z_\alpha,\dots,Z_s).
\end{align*}
This follows by applying the curvature commutator to tensor products and duals; the minus sign on each covariant slot follows by differentiating its evaluation pairing. Consequently, the curvature of $E_s$ is a sum of actions of the base curvature $R^M$ and the bundle curvature $R^E$.

\begin{lemma}[Commutator of the Laplacian and Covariant Derivatives]\label{lema conmutador laplaciano orden m}
Let $(M,g)$ be a Riemannian manifold with or without smooth boundary, and let $E\to M$ have a fiber metric and a compatible connection. For every integer $m\geq1$ and $u\in\Gamma(E)$,
\begin{equation}\label{eq:comm-higher}
 \Delta_B^{(m)}(\nabla^m u)-\nabla^m(\Delta_Bu)
 =\sum_{k=0}^m
   \bigl(\nabla^kR^M*\nabla^{m-k}u
        +\nabla^kR^E*\nabla^{m-k}u\bigr).
\end{equation}
The right-hand side denotes finite sums of universal contractions as in Notation~\ref{notacion estrella}; their coefficients depend only on the derivative order and tensor types.
\end{lemma}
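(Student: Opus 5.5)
Induction on $m$, with two ingredients: a first-order commutator identity for a single $\nabla$ acting on an arbitrary bundle $E_s$, and the Leibniz rule of Proposition~\ref{prop:star-leibniz} to differentiate the schematic terms.

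\textbf{Base case.} We claim that for any $w\in\Gamma(E_s)$, $s\geq0$,
\begin{equation*}
 \Delta_B^{(s+1)}(\nabla w)-\nabla(\Delta_B^{(s)}w)
 =R^{E_s}*\nabla w+\nabla R^{E_s}*w .
\end{equation*}
To see this, we work pointwise in an orthonormal frame $(e_i)$ that is parallel at the point, so all Christoffel terms vanish there. We write $\nabla^3w(Z,e_i,e_i)$ for the component with derivative slots last, and we commute the new derivative slot $Z$ past $e_i,e_i$ with the Ricci identity
\begin{equation*}
 \nabla^2_{X,Y}\sigma-\nabla^2_{Y,X}\sigma=R(X,Y)\sigma,
\end{equation*}
applied twice. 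The first application acts on $\sigma=\nabla w\in E_{s+1}$, which produces $R^{E_{s+1}}(\cdot,\cdot)\nabla w$. The second acts on $\sigma=w$, after differentiating once more, which produces $\nabla(R^{E_s}w)=\nabla R^{E_s}*w+R^{E_s}*\nabla w$. By the displayed formula for $R^{E_s}$, each of $R^{E_s}$ and $R^{E_{s+1}}$ is a fixed contraction of $R^M$ and $R^E$ with the argument. Hence both terms have the form $R^M*\nabla w+R^E*\nabla w+\nabla R^M*w+\nabla R^E*w$, with contractions depending only on $s$. With $s=0$ and $w=u$, this is the case $m=1$.

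\textbf{Inductive step.} Assume \eqref{eq:comm-higher} holds for $m$, and write
\begin{equation*}
 \Delta_B^{(m+1)}\nabla^{m+1}u-\nabla^{m+1}\Delta_Bu
 =\bigl[\Delta_B^{(m+1)}\nabla-\nabla\Delta_B^{(m)}\bigr]\nabla^mu
 +\nabla\bigl[\Delta_B^{(m)}\nabla^mu-\nabla^m\Delta_Bu\bigr].
\end{equation*}
The first bracket is the base identity with $w=\nabla^mu$ and $s=m$. It gives terms of the form $\nabla^kR*\nabla^{m+1-k}u$ with $k\in\{0,1\}$, where $R$ stands for $R^M$ or $R^E$. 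The second bracket is $\nabla$ applied to the inductive expression. By Proposition~\ref{prop:star-leibniz}, $\nabla(\nabla^kR*\nabla^{m-k}u)=\nabla^{k+1}R*\nabla^{m-k}u+\nabla^kR*\nabla^{m+1-k}u$ with the same fixed $\Phi$. Here the convention that the new derivative sits in the last slot is exactly the identification used for $\nabla^{k+1}R$ and $\nabla^{m+1-k}u$, up to a fixed permutation of factors, which is allowed in $*$. Collecting terms with $0\le k\le m+1$ gives \eqref{eq:comm-higher} for $m+1$. The coefficients are universal because each step uses only universal maps.

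\textbf{Main obstacle.} The delicate point is the base identity on a general $E_s$. One must track which slot is differentiated, and express $R^{E_{s+1}}$ and $\nabla R^{E_s}$ as genuine contractions of $R^M$, $R^E$ and their derivatives with fixed $\Phi$, including the sign conventions. I would verify this on decomposable tensors using the curvature formula for $E_s$. Near the boundary no separate argument is needed, because the identity is pointwise and local, and both sides extend to $\partial M$ by continuity.
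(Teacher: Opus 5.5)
Your proposal is correct and is essentially the paper's proof. The paper likewise obtains the one-derivative commutator on an arbitrary metric bundle $F$ from two slot exchanges in $\nabla^3 v$: the covariantly differentiated Ricci identity for $v$, giving $\nabla R^F * v + R^F*\nabla v$, and the Ricci identity for $\nabla v$ on $T^*M\otimes F$, giving $R^F*\nabla v$ plus an $R^M*\nabla v$ term. It then specializes to $F=E_s$ via the curvature formula for $E_s$ and the parallelism of the contractions, and runs the same induction with Proposition~\ref{prop:star-leibniz}.

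One small bookkeeping point: if you start from $(\nabla^3w)(Z,e_i,e_i)$, the exchange of the first two slots (the differentiated identity for $w$) must come first. Exchanging the last two slots, which are both $e_i$, is vacuous. The resulting sum of correction terms is unchanged.
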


\begin{proof}
We first prove the one-derivative identity for an arbitrary metric bundle $F$ with a compatible connection $\nabla^F$. Let $v\in\Gamma(F)$, so that $\nabla^Fv\in\Gamma(T^*M\otimes F)$, and write
\[
 C_F(v)=\Delta_B^{T^*M\otimes F}(\nabla^Fv)-\nabla^F(\Delta_B^Fv),
\]
where the superscripts on the Laplacians indicate the bundles on which they act.

We keep the convention that the newest derivative occupies the last covariant slot. Thus the total second covariant derivative is $\nabla^2v=\nabla(\nabla^Fv)$, with
\[
 (\nabla^2v)(Y,Z)
 =\nabla^F_Z\nabla^F_Yv-\nabla^F_{\nabla^M_ZY}v,
\]
and the third derivative is the tensor
\[
 (\nabla^3v)(Y,Z,W)
=\nabla^F_W\bigl((\nabla^2v)(Y,Z)\bigr)-(\nabla^2v)(\nabla^M_WY,Z)
      -(\nabla^2v)(Y,\nabla^M_WZ).
\]
The torsion-free identity $\nabla^M_ZY-\nabla^M_YZ=[Z,Y]$ relates the total second derivative to the curvature convention above:
\begin{equation}\label{eq:curvature-total-derivative}
 \begin{aligned}
 (\nabla^2v)(Y,Z)-(\nabla^2v)(Z,Y)
 &=\nabla^F_Z\nabla^F_Yv-\nabla^F_Y\nabla^F_Zv
       -\nabla^F_{\nabla^M_ZY-\nabla^M_YZ}v\\
 &=\nabla^F_Z\nabla^F_Yv-\nabla^F_Y\nabla^F_Zv
       -\nabla^F_{[Z,Y]}v\\
 &=R^F(Z,Y)v.
 \end{aligned}
\end{equation}
This identity holds for arbitrary local vector fields $Y,Z$ and applies in particular to bundles equipped with induced connections.

Fix $x\in\operatorname{int}(M)$, an orthonormal basis $(e_1,\ldots,e_n)$ of $T_xM$, and $X\in T_xM$. Choose normal coordinates $(y^1,\ldots,y^n)$ centered at $x$ such that $\partial_{y^i}|_x=e_i$. Writing $X=\displaystyle\sum_{i=1}^n X^i e_i$, extend the vectors to local fields, denoted by the same symbols, by
\[
 e_i(y):=\partial_{y^i}|_y,
 \qquad X(y):=\sum_{i=1}^n X^i e_i(y),
\]
where the coefficients $X^i$ are constant. Since the Christoffel symbols vanish at $x$, these extensions satisfy $\nabla^M_Ye_i(x)=\nabla^M_YX(x)=0$ for every $Y\in T_xM$. The frame is orthonormal at $x$; orthonormality away from $x$ is not needed. All identities in the following trace calculation are evaluated at $x$, after taking the indicated covariant derivatives.

At $x$, the metric trace of an $F$-valued covariant two-tensor $T$ is
\[
 \operatorname{tr}_g T
 =\sum_{i,j=1}^n g^{ij}(x)T(e_i,e_j)
 =\sum_{i=1}^n T(e_i,e_i),
\]
since the chosen normal coordinates satisfy $g^{ij}(x)=\delta^{ij}$. For the section $\nabla^Fv$ of $T^*M\otimes F$, its Bochner Laplacian traces the two covariant derivative slots of $\nabla^2(\nabla^Fv)$ and leaves its original one-form slot free. Under the identification with $\nabla^3v$, the contracted slots are therefore the second and third. For the other term, metric compatibility gives
\[
 \nabla^F_X\bigl(\operatorname{tr}_g(\nabla^2v)\bigr)
 =\operatorname{tr}_g\bigl(\nabla_X(\nabla^2v)\bigr)
 =\sum_{i=1}^n(\nabla^3v)(e_i,e_i,X).
\]
Here the trace retains the two slots of $\nabla^2v$, while the new derivative occupies the last slot $X$. Thus the two terms in $C_F(v)$ are
\begin{equation}\label{eq:comm-traces}
 \begin{aligned}
 \bigl(\Delta_B^{T^*M\otimes F}(\nabla^Fv)\bigr)(X)
   &=-\sum_{i=1}^n\bigl((\nabla^2(\nabla^Fv))(e_i,e_i)\bigr)(X)\\
   &=-\sum_{i=1}^n(\nabla^3v)(X,e_i,e_i),\\
 \bigl(\nabla^F\Delta_B^Fv\bigr)(X)
   &=-\nabla^F_X\bigl(\operatorname{tr}_g(\nabla^2v)\bigr)\\
   &=-\sum_{i=1}^n(\nabla^3v)(e_i,e_i,X).
 \end{aligned}
\end{equation}

We compare these traces in two steps. Substituting $Y=X$ and $Z=e_i$ in \eqref{eq:curvature-total-derivative} gives the local tensor identity
\[
 (\nabla^2v)(X,e_i)-(\nabla^2v)(e_i,X)=R^F(e_i,X)v.
\]
Differentiating this identity covariantly in the direction $e_i$ gives, at $x$, the following formula, where $\nabla R^F$ uses the induced connection on $T^{(0,2)}(TM)\otimes\operatorname{End}(F)$:
\begin{equation}\label{eq:comm-first-exchange}
 \begin{aligned}
 &(\nabla^3v)(X,e_i,e_i)-(\nabla^3v)(e_i,X,e_i)\\
 &\qquad=(\nabla_{e_i}R^F)(e_i,X)v
           +R^F(e_i,X)\nabla^F_{e_i}v.
 \end{aligned}
\end{equation}
The terms involving $\nabla^M_{e_i}e_i$ and $\nabla^M_{e_i}X$ vanish at $x$ by the choice of extensions. For the second exchange, apply \eqref{eq:curvature-total-derivative} to the section $\nabla^Fv$ of $T^*M\otimes F$, using the induced connection $\nabla^{T^*M\otimes F}$. The relation to the total third derivative is
\[
 \bigl((\nabla^2(\nabla^Fv))(Y,Z)\bigr)(W)
 =(\nabla^3v)(W,Y,Z).
\]
Taking $Y=X$, $Z=e_i$, and $W=e_i$, and using the product curvature formula, we obtain
\begin{equation}\label{eq:comm-second-exchange}
 \begin{aligned}
 &(\nabla^3v)(e_i,X,e_i)-(\nabla^3v)(e_i,e_i,X)\\
 &\qquad=\bigl(R^{T^*M\otimes F}(e_i,X)(\nabla^Fv)\bigr)(e_i)\\
 &\qquad=R^F(e_i,X)\nabla^F_{e_i}v
           -\nabla^F_{R^M(e_i,X)e_i}v.
 \end{aligned}
\end{equation}
Adding \eqref{eq:comm-first-exchange} and \eqref{eq:comm-second-exchange} and substituting into \eqref{eq:comm-traces} yields
\begin{equation}\label{eq:comm-exact}
 C_F(v)(X)=-\sum_{i=1}^n\bigl\{(\nabla_{e_i}R^F)(e_i,X)v+2R^F(e_i,X)\nabla^F_{e_i}v-\nabla^F_{R^M(e_i,X)e_i}v\bigr\}.
\end{equation}
Every sum is a tensor contraction, independent of the orthonormal basis. Using the Ricci endomorphism defined above, we can equivalently write
\[
 \begin{aligned}
 &\bigl(\nabla^F\Delta_B^Fv
       -\Delta_B^{T^*M\otimes F}(\nabla^Fv)\bigr)(X)\\
 &\qquad=\nabla^F_{\operatorname{Ric}^\sharp X}v
       +2\sum_{i=1}^nR^F(e_i,X)\nabla^F_{e_i}v
       +\sum_{i=1}^n(\nabla_{e_i}R^F)(e_i,X)v.
 \end{aligned}
\]
The scalar case is obtained by taking $F=M\times\mathbb R$ with the canonical connection $\nabla^F_Xf=X(f)$. This connection is flat because
\[
 R^F(X,Y)f=X(Y(f))-Y(X(f))-[X,Y](f)=0.
\]
Thus $R^F=0$ and $\nabla R^F=0$, while $\nabla^Ff=df$ and $\Delta_B^Ff=\Delta_gf$. Under $T^*M\otimes F\cong T^*M$, the induced connection is the usual connection on one-forms. Substituting in \eqref{eq:comm-exact} gives
\[
 \bigl(\Delta_B^{T^*M}(df)-d\Delta_gf\bigr)(X)
 =\sum_{i=1}^n df\bigl(R^M(e_i,X)e_i\bigr)
 =-df(\operatorname{Ric}^{\sharp}X).
\]
Multiplying by $-1$ and using that $X$ is arbitrary yields the identity of one-forms
\[
 d\Delta_g f-\Delta_B^{T^*M}(df)=df\circ\operatorname{Ric}^\sharp,
\]
where $\Delta_g=-\operatorname{tr}_g\nabla^2$ is the nonnegative Laplace--Beltrami operator and $df\circ\operatorname{Ric}^{\sharp}$ is the one-form $X\mapsto df(\operatorname{Ric}^{\sharp}X)$. The metric $g$ is arbitrary: flatness of the line-bundle connection does not make the Ricci tensor vanish. The identities extend to a smooth boundary by continuity.

For the general bundle $E$, take $F=E_s$. Its curvature formula above expresses $R^{E_s}$ in terms of $R^M$ and $R^E$. Covariantly differentiating that formula introduces only $\nabla R^M$ and $\nabla R^E$, since the contractions and identity factors are parallel. Therefore the first two terms in \eqref{eq:comm-exact} give contractions of the forms $\nabla R*v$ and $R*\nabla^{E_s}v$, with $R=R^M$ or $R^E$. Its last term is another contraction of the form $R^M*\nabla^{E_s}v$. Consequently,
\begin{equation}\label{eq:comm_base}
 \Delta_B^{(s+1)}(\nabla^{E_s}v)-\nabla^{E_s}(\Delta_B^{(s)}v)
 =\sum_{R\in\{R^M,R^E\}}\bigl(R*\nabla^{E_s}v+\nabla R*v\bigr).
\end{equation}
Taking $s=0$ and $v=u$ proves \eqref{eq:comm-higher} for $m=1$: the terms $R*\nabla u$ have $k=0$, and the terms $\nabla R*u$ have $k=1$. Suppose now that \eqref{eq:comm-higher} holds for some $m\geq1$. Since $\nabla^{m+1}u=\nabla^{E_m}(\nabla^m u)$, adding and subtracting $\nabla^{E_m}(\Delta_B^{(m)}\nabla^m u)$ gives
\begin{align*}
 &\Delta_B^{(m+1)}\nabla^{m+1}u-\nabla^{m+1}\Delta_Bu\\
 &\quad=(\Delta_B^{(m+1)}\nabla^{E_m}-\nabla^{E_m}\Delta_B^{(m)})(\nabla^m u)
       +\nabla^{E_m}\bigl(\Delta_B^{(m)}\nabla^m u-\nabla^m\Delta_Bu\bigr).
\end{align*}
Apply \eqref{eq:comm_base} with $s=m$ and $v=\nabla^m u$ to the first term. It gives contractions of the forms $R*\nabla^{m+1}u$ and $\nabla R*\nabla^m u$, for $R=R^M$ or $R^E$. For the second term, apply the inductive hypothesis and the Leibniz rule of Proposition~\ref{prop:star-leibniz} to each contraction:
\[
 \nabla(\nabla^k R*\nabla^{m-k}u)
   =\nabla^{k+1}R*\nabla^{m-k}u
       +\nabla^kR*\nabla^{m+1-k}u.
\]
In the first summand on the right, put $\ell=k+1$. As $k$ ranges from $0$ to $m$, these terms have the form $\nabla^\ell R*\nabla^{m+1-\ell}u$ with $1\leq\ell\leq m+1$. The second summand has the same form with $\ell=k$, hence $0\leq\ell\leq m$. The contributions from the first commutator have $\ell=0$ and $\ell=1$. Collecting terms with the same $R$ and $\ell$ therefore yields
\[
 \Delta_B^{(m+1)}(\nabla^{m+1}u)-\nabla^{m+1}(\Delta_Bu)
 =\sum_{\ell=0}^{m+1}
   \bigl(\nabla^\ell R^M*\nabla^{m+1-\ell}u
        +\nabla^\ell R^E*\nabla^{m+1-\ell}u\bigr).
\]
For each fixed $R$ and $\ell$, the sum of the corresponding contractions is again an allowed finite linear combination in Notation~\ref{notacion estrella}; its coefficients include all multiplicities produced above. This is \eqref{eq:comm-higher} with $m$ replaced by $m+1$, completing the induction.
\end{proof}

\subsection{Equivalence of Norms on Closed Manifolds}

\begin{theorem}[Equivalence of Norms via the Bochner Laplacian]
\label{teo:sobolev_cerrada_riemanniana}
Let $(M,g)$ be a closed Riemannian manifold and $E\to M$ a smooth vector bundle equipped with a fiber metric $h_E$ and a compatible connection $\nabla^E$. For any integer $m\in\mathbb N$ and section $u\in H^m(E)$, we define the structured norm:
\[
\|u\|_{\Delta_B}^2
:=
\displaystyle\sum_{\substack{j=0\\ j\text{ even}}}^{m}
\displaystyle\int_M |\Delta_B^{\frac{j}{2}}u|^2\,d\lambda_g
+
\displaystyle\sum_{\substack{j=1\\ j\text{ odd}}}^{m}
\displaystyle\int_M \big|\nabla \Delta_B^{\frac{j-1}{2}}u\big|^2\,d\lambda_g.
\]
Then there exist constants $c,C>0$ such that
\[
c\|u\|_{H^m(E)}^2
\le
\|u\|_{\Delta_B}^2
\le
C\|u\|_{H^m(E)}^2,
\qquad \forall u\in H^m(E).
\]
\end{theorem}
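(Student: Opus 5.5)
First reduce to smooth sections. By Theorem~\ref{thm:meyers-serrin-bundle}, $\Gamma(E)=S^{m,2}(E)$ (on a closed $M$ every smooth section has finite norm) is dense in $H^m(E)$. Both sides of the claimed inequality are continuous on $H^m(E)$: the upper bound shows that $\|\cdot\|_{\Delta_B}$ is dominated by $\|\cdot\|_{H^m}$, and the operators $\Delta_B^{j/2}$ and $\nabla\Delta_B^{(j-1)/2}$ extend continuously as differential operators of order at most $m$ with smooth coefficients. So it suffices to prove both inequalities for $u\in\Gamma(E)$. The upper bound is then immediate. Expanding $\Delta_B=-\operatorname{tr}_g\nabla^2$ repeatedly writes $\Delta_B^{k}u$ and $\nabla\Delta_B^{k}u$ as fixed contractions of $\nabla^{2k}u$, respectively $\nabla^{2k+1}u$. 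Pointwise, $|\operatorname{tr}_g T|\le\sqrt n\,|T|$, so each term is bounded by $C|\nabla^j u|$, and integrating gives $\|u\|_{\Delta_B}^2\le C\|u\|_{H^m}^2$.

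For the lower bound I would argue by induction on $m$, proving for smooth $u$ that
\[
\int_M|\nabla^m u|^2\,d\lambda_g\le \int_M|P_m u|^2\,d\lambda_g + C\|u\|_{H^{m-1}(E)}^2 + \varepsilon\|u\|_{H^m(E)}^2 ,
\]
where $P_m=\Delta_B^{m/2}$ for $m$ even and $P_m=\nabla\Delta_B^{(m-1)/2}$ for $m$ odd. The key tool is Corollary~\ref{cor:green-bochner} on each $E_s$ (no boundary terms, since $\partial M=\varnothing$), combined with Lemma~\ref{lema conmutador laplaciano orden m}. For $m=2k$ I would write
\[
\int|\nabla^{2k}u|^2=\int\langle\nabla^{2k-1}u,\Delta_B^{(2k-1)}\nabla^{2k-1}u\rangle .
\]
The commutator lemma replaces $\Delta_B^{(2k-1)}\nabla^{2k-1}u$ by $\nabla^{2k-1}\Delta_Bu$ plus terms $\nabla^\ell R*\nabla^{2k-1-\ell}u$. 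Iterating this, moving one $\nabla$ across via Green's formula and commuting one Laplacian out at each step, reduces the identity to $\int|\Delta_B^k u|^2$ plus error integrals. Each error is a pairing of $\nabla^a u$ with $\nabla^b u$, where $a+b\le 2m-1$ after integrating by parts to balance the orders. Since $M$ is compact, the curvature tensors $\nabla^\ell R^M$ and $\nabla^\ell R^E$ are bounded, and \eqref{eq:star-bound} applies with $K=M$. The odd case $m=2k+1$ is handled the same way, starting from $\int|\nabla^{2k+1}u|^2$ and comparing with $\int|\nabla\Delta_B^k u|^2$ after commuting $\Delta_B^k$ through $\nabla^{2k}$.

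The main obstacle is bookkeeping the error terms so that each is genuinely lower order. A term like $\int\langle\nabla^{m}u,\nabla^\ell R*\nabla^{m-1-\ell}u\rangle$ is fine: by Cauchy--Schwarz and Young it is at most $\varepsilon\|\nabla^m u\|^2+C_\varepsilon\|u\|_{H^{m-1}}^2$. However, the iteration produces terms in which derivatives exceed order $m$ on one factor, for instance $\nabla^{2k-1}\Delta_B u$ paired against $\nabla^{2k-1}u$. These must be integrated back by parts before being estimated, and the derivatives of the curvature contractions are handled by Proposition~\ref{prop:star-leibniz}. I would organize this as an auxiliary claim: for smooth $u$,
\[
\Bigl|\int|\nabla^{m}u|^2-\int|P_m u|^2\Bigr|\le C\|u\|_{H^m}\|u\|_{H^{m-1}} ,
\]
proved by induction on $m$ with the commutator lemma. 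Absorbing with $\varepsilon$ then gives $\|\nabla^m u\|^2\le 2\|P_m u\|^2+C\|u\|_{H^{m-1}}^2$. Finally, by the inductive hypothesis $\|u\|_{H^{m-1}}^2\le C\|u\|_{\Delta_B,m-1}^2\le C\|u\|_{\Delta_B,m}^2$, since the structured norm of order $m-1$ consists of a subset of the terms of the order-$m$ structured norm. This closes the induction, and density extends the inequality to all of $H^m(E)$.
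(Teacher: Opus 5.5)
Your proposal is correct and follows essentially the paper's proof. The upper bound comes from writing $\Delta_B^k u$ and $\nabla\Delta_B^k u$ as fixed contractions of $\nabla^{2k}u$ and $\nabla^{2k+1}u$. The lower bound comes from induction using Corollary~\ref{cor:green-bochner} on the bundles $E_s$ together with Lemma~\ref{lema conmutador laplaciano orden m}, peeling off one Laplacian and recursing on $\Delta_B u$; your auxiliary claim is exactly this recursion, since $P_{m-2}(\Delta_B u)=P_m u$.

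The only difference is bookkeeping. The paper applies the one-sided order-two estimate \eqref{eq:order-two-estimate} on $E_{j-2}$ to $\nabla^{j-2}u$. It then replaces $\Delta_B^{(j-2)}\nabla^{j-2}u$ by $\nabla^{j-2}\Delta_B u$ via the commutator lemma and the triangle inequality, and invokes the inductive bound $\|\Delta_B u\|_{H^{j-2}}^2\le C S_{j-2}(\Delta_B u)\le C S_j(u)$ directly. It therefore never needs your two-sided comparison with $\varepsilon$-absorption.
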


\begin{proof}
It suffices to prove the estimates for smooth sections and then pass to the completion. We write $\|\cdot\|_2$ for the appropriate tensor $L^2$ norm. Constants denoted by $C$ may change from one occurrence to another; they depend on the fixed geometry and the derivative orders, but not on the section. There are no boundary terms because $M$ is closed.

\medskip
\noindent\textit{Step 1: $\|u\|_{\Delta_B}^2\leq C\|u\|_{H^m(E)}^2$.}

By definition, the Bochner Laplacian is the metric trace of the second covariant derivative: $\Delta_B = -\operatorname{tr}_g(\nabla^2)$. Since $\nabla g^{-1}=0$, covariant differentiation commutes with contraction over any fixed pair of tensor slots. Explicitly, with the remaining tensor indices suppressed, the Leibniz rule gives
\[
 \nabla_X(\operatorname{tr}_g T)
 =(\nabla_Xg^{-1})^{pq}T_{pq}+g^{pq}(\nabla_XT)_{pq}
 =\operatorname{tr}_g(\nabla_XT).
\]

Applying this commutativity iteratively for any integer $k \ge 1$, we obtain:
\[
\Delta_B^k u = (-1)^k \operatorname{tr}_g^{(k)}(\nabla^{2k}u),
\]
where $\operatorname{tr}_g^{(k)}$ contracts the pairs of slots $(1,2),(3,4),\dots,(2k-1,2k)$. At each point, in an orthonormal basis $(e_1,\dots,e_n)$, this means
\[
 \operatorname{tr}_g^{(k)}(\nabla^{2k}u)
 =\sum_{i_1,\dots,i_k=1}^n
   (\nabla^{2k}u)(e_{i_1},e_{i_1},\dots,e_{i_k},e_{i_k}).
\]
The same original slots are retained when derivatives are taken; no derivatives are commuted in these identities. This implies that $\Delta_B^k u$ is simply a specific tensor contraction of $\nabla^{2k}u$, which in our notation can be written as $\Delta_B^k u = g^{-1} * \dots * g^{-1} * \nabla^{2k}u$. The pointwise contraction estimate~\eqref{eq:star-bound}, with $K=M$, gives $C_k > 0$ such that $|\Delta_B^k u|_{h_E} \le C_k |\nabla^{2k}u|_{g,h_E}$. Squaring and integrating over $M$:
\[
\displaystyle\int_M |\Delta_B^k u|_{h_E}^2\,d\lambda_g \le C_k^2 \displaystyle\int_M |\nabla^{2k}u|_{g,h_E}^2\,d\lambda_g.
\]
Similarly, for the odd terms, $\nabla \Delta_B^k u = (-1)^k \operatorname{tr}_g^{(k)}(\nabla^{2k+1}u)$, where the same first $2k$ slots are contracted in pairs and the last derivative slot remains free. Bounding this in the exact same manner yields:
\[
\displaystyle\int_M |\nabla \Delta_B^k u|_{g,h_E}^2\,d\lambda_g \le (C_k')^2 \displaystyle\int_M |\nabla^{2k+1}u|_{g,h_E}^2\,d\lambda_g.
\]
Summing over all $j \in \{0, \dots, m\}$ (separating into even and odd $j$), we conclude that each term in $\|u\|_{\Delta_B}^2$ is bounded by the corresponding $j$-th term in the standard Sobolev norm, establishing the first inequality.

\medskip
\noindent\textit{Step 2: $\|u\|_{H^m(E)}^2\leq C\|u\|_{\Delta_B}^2$.}

For each nonnegative integer $j$, denote the truncated structured sum by
\[
 S_j(u)=\sum_{\substack{0\leq l\leq j\\l\text{ even}}}
          \|\Delta_B^{\frac{l}{2}}u\|_2^2
       +\sum_{\substack{1\leq l\leq j\\l\text{ odd}}}
          \|\nabla\Delta_B^{\frac{l-1}{2}}u\|_2^2.
\]
We prove by strong induction that $\|u\|_{H^j(E)}^2\leq C_jS_j(u)$ for every smooth $u$. The assertions for $j=0,1$ are immediate.

We first establish the order-two estimate on any fixed induced bundle $F$. Green's formula, applied to $\nabla v$, gives
\begin{align*}
 \|\nabla^2v\|_2^2
  &=\int_M\langle\nabla v,
          \Delta_B^{T^*M\otimes F}(\nabla v)\rangle\,d\lambda_g\\
  &=\int_M\langle\nabla v,\nabla\Delta_B^Fv\rangle\,d\lambda_g
       +\int_M\langle\nabla v,C_F(v)\rangle\,d\lambda_g.
\end{align*}
Another application of Green's formula turns the first integral into $\|\Delta_B^Fv\|_2^2$. The exact identity \eqref{eq:comm-exact} includes both curvature times $\nabla v$ and the derivative of the bundle curvature times $v$. Since these coefficients are bounded on $M$, it yields
\[
 |C_F(v)|\leq C(|\nabla v|+|v|).
\]
The Cauchy--Schwarz and Young inequalities consequently give
\begin{equation}\label{eq:order-two-estimate}
 \|\nabla^2v\|_2^2
 \leq\|\Delta_B^Fv\|_2^2
       +C\bigl(\|\nabla v\|_2^2+\|v\|_2^2\bigr).
\end{equation}
Taking $F=E$ proves the induction claim for $j=2$.

Now let $j\geq3$ and assume the claim at every smaller order. Apply \eqref{eq:order-two-estimate} on $E_{j-2}$ to $v=\nabla^{j-2}u$. We obtain
\[
 \|\nabla^ju\|_2^2
 \leq\|\Delta_B^{(j-2)}\nabla^{j-2}u\|_2^2
       +C\|u\|_{H^{j-1}(E)}^2.
\]
Lemma~\ref{lema conmutador laplaciano orden m}, the pointwise contraction estimate, and boundedness of the relevant curvature derivatives imply
\[
 \|\Delta_B^{(j-2)}\nabla^{j-2}u\|_2^2
 \leq C\|\nabla^{j-2}\Delta_Bu\|_2^2
       +C\sum_{l=0}^{j-2}\|\nabla^l u\|_2^2.
\]
Here squaring the finite sum only changes the constant $C$. Combining these inequalities and applying the induction claim gives
\begin{align*}
 \|\nabla^ju\|_2^2
 &\leq C\|\Delta_Bu\|_{H^{j-2}(E)}^2
        +C\|u\|_{H^{j-1}(E)}^2\\
 &\leq C S_{j-2}(\Delta_Bu)+C S_{j-1}(u).
\end{align*}
The first application is legitimate because the induction statement holds for every smooth section, including $\Delta_Bu$. Re-indexing by $r=l+2$ gives
\[
 S_{j-2}(\Delta_Bu)
 =\sum_{\substack{2\leq r\leq j\\r\text{ even}}}
      \|\Delta_B^{\frac{r}{2}}u\|_2^2
  +\sum_{\substack{3\leq r\leq j\\r\text{ odd}}}
      \|\nabla\Delta_B^{\frac{r-1}{2}}u\|_2^2
 \leq S_j(u).
\]
Also $S_{j-1}(u)\leq S_j(u)$. Adding the already controlled derivatives of order less than $j$ proves $\|u\|_{H^j(E)}^2\leq C_jS_j(u)$. For $j=m$, this is the desired lower estimate because $S_m(u)=\|u\|_{\Delta_B}^2$. Passage to the completion finishes the proof.
\end{proof}

An immediate and highly useful consequence of this theorem is the corresponding norm equivalence for scalar functions on closed manifolds, which recovers the natural generalization of \eqref{eq: euclidean sobolev equivalence} via the Laplace--Beltrami operator.

\begin{corollary}[Norm Equivalence for the Laplace--Beltrami Operator]
\label{cor:laplace-beltrami}
Let $(M,g)$ be a closed Riemannian manifold. For any integer $m \in \mathbb{N}$ and any scalar function $u \in H^m(M)$, the standard scalar Sobolev norm
\[
\|u\|_{H^m(M)}^2 := \displaystyle\sum_{j=0}^m \displaystyle\int_M |\nabla^j u|_g^2 \, d\lambda_g
\]
is equivalent to the structured norm defined via the Laplace--Beltrami operator $\Delta_g$:
\[
\|u\|_{\Delta_g}^2 := \displaystyle\sum_{\substack{j=0\\ j\text{ even}}}^{m} \displaystyle\int_M |\Delta_g^{\frac{j}{2}}u|^2\,d\lambda_g + \displaystyle\sum_{\substack{j=1\\ j\text{ odd}}}^{m} \displaystyle\int_M \big|\nabla \Delta_g^{\frac{j-1}{2}}u\big|_g^2\,d\lambda_g.
\]
\end{corollary}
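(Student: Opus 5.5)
The plan is to obtain the corollary as the special case of Theorem~\ref{teo:sobolev_cerrada_riemanniana} in which $E=M\times\mathbb R$ is the trivial line bundle. We equip it with the fiber metric $h_E(s,t)=st$ and the canonical flat connection $\nabla^E f=df$. This connection is compatible with $h_E$, since $d(fg)=f\,dg+g\,df$. By the convention fixed in the preliminaries, sections of $E$ are scalar functions. Hence $L^2(E)$, $H^m(E)$ and the pointwise norm $|\cdot|_{g,h_E}$ coincide with $L^2(M)$, $H^m(M)$ and $|\cdot|_g$. For $s\geq1$, the bundle $T^{(0,s)}(TM)\otimes E$ is canonically $T^{(0,s)}(TM)$, with the same induced metric and connection. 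Therefore $\nabla^s u$ is the usual iterated Levi--Civita derivative of $u$, and $\|u\|_{H^m(E)}=\|u\|_{H^m(M)}$.

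Next I will identify the operators. Under this identification, $\Delta_B=-\operatorname{tr}_g\nabla^2$ on $E$ is exactly the nonnegative Laplace--Beltrami operator $\Delta_g=-\operatorname{tr}_g\nabla^2$. This is the same remark already made for $F=M\times\mathbb R$ in the proof of Lemma~\ref{lema conmutador laplaciano orden m}. It follows that $\Delta_B^{k}u=\Delta_g^{k}u$ and $\nabla\Delta_B^{k}u=\nabla\Delta_g^{k}u$ for every $k\geq0$. Comparing term by term, the structured norm $\|u\|_{\Delta_B}$ equals $\|u\|_{\Delta_g}$ for every $u\in H^m(M)$. Applying Theorem~\ref{teo:sobolev_cerrada_riemanniana} then gives constants $c,C>0$ with $c\|u\|_{H^m(M)}^2\le\|u\|_{\Delta_g}^2\le C\|u\|_{H^m(M)}^2$.

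There is no substantial obstacle. The only point requiring care is to check that all the identifications are isometric and intertwine the connections. I would verify this directly: the induced inner product formula with $h_{11}=1$ reduces to the tensor inner product, and the connection coefficient $A_c$ vanishes in the frame $e_1=1$. Lemma~\ref{lem:connection-components} then shows that the induced covariant derivative coincides with the Levi--Civita one on tensor fields. It also helps that the commutator identities used in the theorem simplify in this setting, since $R^E=0$ and only $R^M$ remains, although the theorem does not need this fact.
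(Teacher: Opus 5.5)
Your proposal is correct and follows the paper's proof. Like the paper, you specialize Theorem~\ref{teo:sobolev_cerrada_riemanniana} to the trivial line bundle $M\times\mathbb R$ with the standard fiber metric and flat connection, identify $\Delta_B$ with $\Delta_g=-\operatorname{tr}_g\nabla^2$, and read off the equivalence; your extra check via Lemma~\ref{lem:connection-components} (with $A_c=0$ in the frame $e_1=1$) spells out identifications the paper only asserts.
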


\begin{proof}
Consider the trivial line bundle $E = M \times \mathbb{R}$ equipped with the standard Euclidean metric on the fibers and the standard flat connection. Smooth sections of $E$ are naturally identified with smooth scalar functions $u \in C^\infty(M)$. Under this identification, the compatible connection $\nabla^E$ reduces to the standard Riemannian connection (with $\nabla^E u$ corresponding to the differential $du$, whose pointwise norm coincides with that of the gradient $\nabla u$), and the fiber metric $h_E$ is simply standard scalar multiplication. 

Consequently, the Bochner Laplacian $\Delta_B = \nabla^{E*} \nabla^E = -\operatorname{tr}_g(\nabla^2)$ acting on sections of $M \times \mathbb{R}$ coincides exactly with the nonnegative Laplace--Beltrami operator $\Delta_g = -\operatorname{div} \circ \nabla$ acting on scalar functions. The result then follows immediately from applying Theorem \ref{teo:sobolev_cerrada_riemanniana} to this specific bundle.
\end{proof}

\appendix
\section{Localization and Approximation at the Boundary}\label{app:boundary}

Throughout this appendix, $\mathbb R^n_+=\{x_n>0\}$ is the open half-space, $B^+=B(0,R)\cap\mathbb R^n_+$, and $B'=B(0,R)\cap\{x_n=0\}$ is its flat boundary portion. A function smooth up to $B'$ is understood as the restriction of a smooth function defined in an open neighborhood of $B^+\cup B'$ in $\mathbb R^n$. Lemmas~\ref{lem:halfball-zero} and~\ref{lem:halfspace-tools} provide the localization and smooth approximation used in Section~\ref{sec:global_approximation}. Lemma~\ref{lem:boundary-local-embeddings} uses an extension operator to prove the localized continuous and compact embeddings needed in Section~\ref{sec:embeddings_rellich}.

\begin{boundarylemma}[Zero extension within the half-space]\label{lem:halfball-zero}
Let $m\geq0$, $1\leq p<\infty$, and $v\in W^{m,p}(B^+)$. Suppose that $v=0$ almost everywhere in $B^+\setminus K$ for a compact set $K\subset B(0,R)\cap\{x_n\geq0\}$. Define
\[
 \widetilde v(x):=
 \begin{cases}
  v(x),&x\in B^+,\\
  0,&x\in\mathbb R^n_+\setminus B^+.
 \end{cases}
\]
Then $\widetilde v\in W^{m,p}(\mathbb R^n_+)$, and
\[
 D^\beta\widetilde v=\widetilde{D^\beta v},\qquad |\beta|\leq m,
 \qquad
 \|\widetilde v\|_{W^{m,p}(\mathbb R^n_+)}
 =\|v\|_{W^{m,p}(B^+)}.
\]
Here $\widetilde{D^\beta v}$ denotes the same zero extension of $D^\beta v$. The compact set may meet $B'$; the extension is defined only for $x_n>0$, across the spherical edge of the half-ball.
\end{boundarylemma}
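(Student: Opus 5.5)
The plan is to check the weak-derivative identity on $\mathbb R^n_+$ directly with a cutoff, in the same way as Remark~\ref{rem:zero-extension}. Since $K$ is compact in $B(0,R)\cap\{x_n\geq0\}$, it has positive distance $\delta$ from the sphere $\partial B(0,R)$. Choose $\chi\in C_c^\infty(B(0,R))$ with $\chi=1$ on the $\delta/2$-neighborhood of $K$. Note that $\chi$ is a function on all of $\mathbb R^n$ with compact support in the open ball; it is not required to vanish near $\{x_n=0\}$.

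Fix $|\beta|\leq m$ and a test function $\varphi\in C_c^\infty(\mathbb R^n_+)$. The product $\chi\varphi$ has support in $\operatorname{supp}\chi\cap\operatorname{supp}\varphi$, which is compact and contained in $B(0,R)\cap\mathbb R^n_+=B^+$. Hence $\chi\varphi\in C_c^\infty(B^+)$ is an admissible test function for the weak derivative of $v$ on $B^+$. We first use that $v=0$ a.e. off $K$, so $\widetilde v\,D^\beta\varphi=v\,\chi D^\beta\varphi$ a.e. on $B^+$. Next, $\chi=1$ near $K$ gives $\chi D^\beta\varphi=D^\beta(\chi\varphi)$ on a neighborhood of $K$. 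Together these yield
\[
\int_{\mathbb R^n_+}\widetilde v\,D^\beta\varphi\,dx=\int_{B^+}v\,D^\beta(\chi\varphi)\,dx=(-1)^{|\beta|}\int_{B^+}D^\beta v\,\chi\varphi\,dx .
\]

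To finish the identity, we need $D^\beta v=0$ a.e. in $B^+\setminus K$. This is the Euclidean locality argument of Proposition~\ref{prop:weak_derivative_locality}: test against functions in $C_c^\infty(B^+\setminus K)$, which is an open set. With this in hand, $\chi D^\beta v=D^\beta v$ a.e., and the last integral equals $(-1)^{|\beta|}\int_{\mathbb R^n_+}\widetilde{D^\beta v}\,\varphi\,dx$. This proves $D^\beta\widetilde v=\widetilde{D^\beta v}$. Each $\widetilde{D^\beta v}$ has the same $L^p$ norm as $D^\beta v$, so $\widetilde v\in W^{m,p}(\mathbb R^n_+)$ and the two Sobolev norms coincide.

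The main point needing care is the support bookkeeping. We must make sure that $\chi\varphi$ is compactly supported in the open half-ball even though $K$ may touch $B'$. This holds because $\varphi$ itself is supported away from $\{x_n=0\}$, while $\chi$ only has to handle the spherical edge. The rest of the argument is routine.
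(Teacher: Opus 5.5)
Your argument is correct and is essentially the paper's proof. The paper regards $B^+\cup B'$ as a relatively open subset of the flat manifold with boundary $\{x_n\geq0\}$, carrying the trivial line bundle, and invokes Remark~\ref{rem:zero-extension}, whose proof is exactly your computation (a cutoff equal to one near $K$, the admissible test function $\chi\varphi$, and locality of weak derivatives); you run it directly with Euclidean multi-indices, which avoids having to identify weak covariant derivatives with weak partial derivatives.
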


\begin{proof}
Let $H:=\{x\in\mathbb R^n\mid x_n\geq0\}$ and $U:=B(0,R)\cap H=B^+\cup B'$. Equip $H$ with the Euclidean metric and the trivial line bundle $E:=H\times\mathbb R$ with its standard fiber metric and canonical connection. Since $B'$ has zero $n$-dimensional measure and weak derivatives are tested in $\operatorname{int}(U)=B^+$, we may regard $v$ as a measurable section on $U$, assigning arbitrary values on $B'$. In these coordinates, weak covariant derivatives have the weak partial derivatives as their components. The corresponding finite-dimensional component norms are equivalent, so this section belongs to $W^{m,p}(E|_U)$.

The set $K$ is compact in the relatively open subset $U\subset H$. Remark~\ref{rem:zero-extension} therefore applies. The restriction to $\mathbb R^n_+$ of the resulting extension is precisely $\widetilde v$, and taking components of the weak derivative identity gives
\[
 D^\beta\widetilde v=\widetilde{D^\beta v},
 \qquad |\beta|\leq m.
\]
Consequently, $\widetilde v\in W^{m,p}(\mathbb R^n_+)$, and
\[
 \begin{aligned}
 \|\widetilde v\|_{W^{m,p}(\mathbb R^n_+)}^p
 &=\sum_{|\beta|\leq m}\int_{\mathbb R^n_+}
          |\widetilde{D^\beta v}|^p\,dx\\
 &=\sum_{|\beta|\leq m}\int_{B^+}|D^\beta v|^p\,dx
 =\|v\|_{W^{m,p}(B^+)}^p.
 \end{aligned}
\]
\end{proof}

\begin{boundarylemma}[Approximation at a flat boundary]\label{lem:halfspace-tools}
Let $m\geq0$ and $1\leq p<\infty$. The restrictions of $C_c^\infty(\mathbb R^n)$ are dense in $W^{m,p}(\mathbb R^n_+)$. Consequently, a function $v$ as in Lemma~\ref{lem:halfball-zero} can be approximated in $W^{m,p}(B^+)$ by functions smooth up to the flat boundary and supported away from the spherical edge.
\end{boundarylemma}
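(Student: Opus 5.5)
The plan is the classical translation-and-mollification argument. Let $v\in W^{m,p}(\mathbb R^n_+)$ and $\varepsilon>0$. First I reduce to compact support. Fix $\zeta\in C_c^\infty(\mathbb R^n)$ with $\zeta=1$ on $B(0,1)$, and set $\zeta_R(x)=\zeta(x/R)$. Then $\zeta_R v\to v$ in $W^{m,p}(\mathbb R^n_+)$ as $R\to\infty$. This follows from the weak Leibniz rule: every derivative falling on $\zeta_R$ gains a factor $R^{-1}$, and $\|v\|_{W^{m,p}(\mathbb R^n_+\setminus B(0,R))}\to0$ by dominated convergence. So we may assume $v$ vanishes outside a bounded set.

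Next comes the upward translation. For $t>0$ put $v_t(x):=v(x+te_n)$, which is defined on $\{x_n>-t\}\supset\overline{\mathbb R^n_+}$. It lies in $W^{m,p}(\{x_n>-t\})$ with $D^\beta v_t=(D^\beta v)(\cdot+te_n)$. By continuity of translation in $L^p$ (applied to the zero extensions of the $D^\beta v$ to $\mathbb R^n$, restricted to $\mathbb R^n_+$), we get $\|v_t-v\|_{W^{m,p}(\mathbb R^n_+)}\to0$ as $t\to0^+$. Fix $t$ so that this error is below $\varepsilon/2$.

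Then mollify. Let $\rho_\delta$ be a standard mollifier with $\delta<t/2$, and set $w:=\rho_\delta*v_t$. This is well defined and smooth on $\{x_n>-t/2\}$, with $D^\beta w=\rho_\delta*D^\beta v_t$ there. The identity holds because the weak derivative identity on $\{x_n>-t\}$ can be tested against $\rho_\delta(x-\cdot)$, whose support lies in $\{x_n>-t\}$. Moreover $w$ has bounded support. Hence $\|w-v_t\|_{W^{m,p}(\mathbb R^n_+)}<\varepsilon/2$ for $\delta$ small, by the standard $L^p$ convergence of mollifications on the larger open set. Finally, multiply $w$ by a cutoff $\theta\in C_c^\infty(\mathbb R^n)$ that equals $1$ on a neighborhood of $\operatorname{supp}w\cap\overline{\mathbb R^n_+}$ and is supported in $\{x_n>-t/2\}$. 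The product $\theta w$ extends by zero to an element of $C_c^\infty(\mathbb R^n)$ whose restriction to $\mathbb R^n_+$ equals $w$. This proves density.

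For the consequence, take $v$ as in Lemma~\ref{lem:halfball-zero}. Extend it by zero to $\widetilde v\in W^{m,p}(\mathbb R^n_+)$, with the same norm, by that lemma. Approximate $\widetilde v$ by restrictions of $h_k\in C_c^\infty(\mathbb R^n)$. Choose $\eta\in C_c^\infty(B(0,R))$ with $\eta=1$ near $K$, and set $v_k:=(\eta h_k)|_{B^+}$. Since $\eta\widetilde v=\widetilde v$, the scalar multiplier bound of Remark~\ref{rem:smooth-multipliers}, applied on the half-space, gives
\[
\|v_k-v\|_{W^{m,p}(B^+)}\le C_\eta\|h_k-\widetilde v\|_{W^{m,p}(\mathbb R^n_+)}\to0 .
\]
Each $v_k$ is smooth up to $B'$ and vanishes near the spherical edge.

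The main obstacle is the justification of $D^\beta(\rho_\delta*v_t)=\rho_\delta*D^\beta v_t$ up to and across the flat boundary. This is exactly why the translation is performed first: it makes the whole $\delta$-neighborhood of $\overline{\mathbb R^n_+}$ interior to the domain where $v_t$ is weakly differentiable, so only interior mollification is ever used.
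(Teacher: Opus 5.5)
Your proof is correct. The second half (zero extension by Lemma~\ref{lem:halfball-zero}, a fixed cutoff $\eta$ equal to one near $K$, and the multiplier estimate based on $\eta\widetilde v=\widetilde v$) is exactly the paper's argument.

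The difference is in the density assertion itself. The paper does not prove it: for $m\geq1$ it cites Leoni's theorem for open sets with continuous boundary \cite[Theorem~11.35]{Leoni2017}, and for $m=0$ it invokes the usual $L^p$ density. You instead give a self-contained proof. You cut off at infinity, translate inward by $te_n$, and mollify with radius $\delta<t/2$. Since $\overline{B(x,\delta)}\subset\{y_n>-t\}$ whenever $x_n>-t/2$, only interior mollification on $\{x_n>-t\}$ is ever used.

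Your argument is essentially the mechanism behind Leoni's theorem, specialized to the flat case. Because the half-space has a single global inward direction, no partition of unity or local graph representation is needed. Continuity of translations, applied to the zero extensions of the $D^\beta v$ to $\mathbb R^n$, controls the translation error directly. Your argument also handles $m=0$ by the same steps.

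The cutoff at infinity is needed only to make the final cutoff $\theta$ compactly supported. The mollification error on $\mathbb R^n_+$ is controlled by $\|\rho_\delta*h-h\|_{L^p(\mathbb R^n)}$, with $h$ the zero extension of $D^\beta v_t$ from $\{x_n>-t\}$, without any support assumption.

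The paper's citation buys brevity and covers far more general domains. Only the half-space is used here, so your version makes the appendix independent of that external result, at the cost of about a paragraph of standard estimates.
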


\begin{proof}
The density assertion follows from Leoni's theorem for open sets with continuous boundary, \cite[Theorem~11.35, p.~330]{Leoni2017}, applied to $\mathbb R^n_+$ when $m\geq1$; for $m=0$ it is the usual $L^p$ density. To obtain the localized conclusion, extend $v$ by Lemma~\ref{lem:halfball-zero} and choose $f_k\in C_c^\infty(\mathbb R^n)$ whose restrictions converge to $\widetilde v$ in $W^{m,p}(\mathbb R^n_+)$. Fix $\chi\in C_c^\infty(B(0,R))$ equal to one near $K$, and put $v_k:=(\chi f_k)|_{B^+}$. Since $\chi\widetilde v=\widetilde v$, the scalar multiplier estimate in Remark~\ref{rem:smooth-multipliers} gives
\[
 \begin{aligned}
 \|v_k-v\|_{W^{m,p}(B^+)}
 &\leq\|\chi(f_k|_{\mathbb R^n_+}-\widetilde v)\|_{W^{m,p}(\mathbb R^n_+)}\\
 &\leq C_\chi\|f_k|_{\mathbb R^n_+}-\widetilde v\|_{W^{m,p}(\mathbb R^n_+)}
 \longrightarrow0.
 \end{aligned}
\]
The smooth extensions $\chi f_k$ have support in the fixed compact set $\operatorname{supp}\chi\subset B(0,R)$. Their restrictions to $B^+\cup B'$ therefore have support in $\operatorname{supp}\chi\cap\{x_n\geq0\}$, away from the spherical edge. Their values at the flat boundary are $(\chi f_k)(x',0)$, which need not vanish.
\end{proof}

\begin{boundarylemma}[Localized embeddings at a flat boundary]\label{lem:boundary-local-embeddings}
Let $K\subset B(0,R)\cap\{x_n\geq0\}$ be compact, let $\ell>k\geq0$ be integers, and let $1\leq p,q<\infty$. If $\ell-\frac{n}{p}\geq k-\frac{n}{q}$, there is a constant $C>0$ such that every $v\in W^{\ell,p}(B^+)$ vanishing almost everywhere in $B^+\setminus K$ belongs to $W^{k,q}(B^+)$ and satisfies
\[
 \|v\|_{W^{k,q}(B^+)}\leq C\|v\|_{W^{\ell,p}(B^+)}.
\]
If $\ell-\frac{n}{p}>k-\frac{n}{q}$, every bounded sequence of such functions, with the same compact set $K$, has a subsequence converging in $W^{k,q}(B^+)$.
\end{boundarylemma}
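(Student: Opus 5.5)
The plan is to reduce to the whole-space Euclidean embeddings by extending across the flat boundary. First, use a higher-order reflection (Hestenes/Lichtenstein type) extension operator. For $x_n<0$ set
\[
 (Pv)(x',x_n):=\sum_{i=1}^{\ell+1}\lambda_i\,v(x',-x_n/i),
\]
where the constants $\lambda_i$ solve the Vandermonde system $\sum_i\lambda_i(-1/i)^{j}=1$ for $0\leq j\leq \ell$. For $x_n>0$ take $Pv=v$. The extension should be applied to the zero extension $\widetilde v\in W^{\ell,p}(\mathbb R^n_+)$ supplied by Lemma~\ref{lem:halfball-zero}.

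For $f$ smooth up to $\{x_n=0\}$ with compact support, the choice of $\lambda_i$ makes all normal derivatives through order $\ell$ match across $x_n=0$. Hence $Pf\in C^{\ell}_c(\mathbb R^n)$ and
\[
 \|Pf\|_{W^{j,r}(\mathbb R^n)}\leq C\|f\|_{W^{j,r}(\mathbb R^n_+)}
\]
for all $j\leq\ell$ and $1\leq r<\infty$, by a change of variables in each reflected piece. Lemma~\ref{lem:halfspace-tools} gives density of restrictions of $C_c^\infty(\mathbb R^n)$ in $W^{\ell,p}(\mathbb R^n_+)$, so $P$ extends to a bounded operator $W^{\ell,p}(\mathbb R^n_+)\to W^{\ell,p}(\mathbb R^n)$. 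Moreover $Pv$ is supported in the compact set $K^\ast:=K\cup\{(x',-x_n/i):(x',x_n)\in K,\ 1\leq i\leq \ell+1\}$, which lies inside $B(0,R)$ because $|(x',-x_n/i)|\leq|x|$.

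Next, fix a smooth bounded ball $B$ with $K^\ast\Subset B$. For the continuous estimate, apply the scalar embedding $W^{\ell,p}(B)\hookrightarrow W^{k,q}(B)$ from \cite[Theorem~1.2.26 and Remark~1.2.27]{DrabekMilota2007}, applied to derivatives of order at most $k$ with $m=\ell-k$. The case $q<p$ is handled by Hölder's inequality on the bounded set $B$. This gives
\[
 \|v\|_{W^{k,q}(B^+)}\leq\|Pv\|_{W^{k,q}(B)}\leq C\|Pv\|_{W^{\ell,p}(B)}\leq C'\|v\|_{W^{\ell,p}(B^+)},
\]
where the last step uses $\|\widetilde v\|_{W^{\ell,p}(\mathbb R^n_+)}=\|v\|_{W^{\ell,p}(B^+)}$. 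The exponent condition $\ell-\frac np\geq k-\frac nq$ is exactly $\frac1q\geq\frac1p-\frac{\ell-k}{n}$.

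For compactness, a bounded sequence $(v_j)$ with common support set $K$ yields a sequence $(Pv_j)$ bounded in $W^{\ell,p}(B)$. Apply the scalar Rellich--Kondrashov theorem \cite[Theorem~1.2.28]{DrabekMilota2007} to each derivative $D^\beta Pv_j$ with $|\beta|\leq k$; these are bounded in $W^{\ell-k,p}(B)$. The strict inequality gives compactness into $L^q(B)$. Extracting successive subsequences over the finitely many $\beta$ gives convergence in $W^{k,q}(B)$, and restricting to $B^+$ gives convergence in $W^{k,q}(B^+)$.

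The main obstacle is the boundedness of the reflection operator on Sobolev spaces up to order $\ell$. Specifically, one must verify that the weak derivatives of $Pv$ across $\{x_n=0\}$ carry no singular part. I will handle this entirely on smooth functions up to the boundary, where the matching of normal derivatives through order $\ell-1$ makes $Pf$ of class $C^{\ell-1}$ with piecewise continuous $\ell$-th derivatives, hence in $W^{\ell,p}$. Density then passes the estimate to $W^{\ell,p}(\mathbb R^n_+)$, since the $W^{\ell,p}$ limit of $Pf_j$ identifies $Pv$. The support statement is preserved in the limit because the $f_j$ can be cut off by a fixed $\chi$ as in Lemma~\ref{lem:halfspace-tools}.
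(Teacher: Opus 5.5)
Your argument is correct, but it extends across $\{x_n=0\}$ with a different tool from the paper's. The paper also starts from the zero extension $\widetilde v\in W^{\ell,p}(\mathbb R^n_+)$ of Lemma~\ref{lem:halfball-zero}. It then invokes Stein's extension theorem for Lipschitz graph domains \cite[Theorem~13.17]{Leoni2017} as a black box to obtain $\mathcal E_\ell:W^{\ell,p}(\mathbb R^n_+)\to W^{\ell,p}(\mathbb R^n)$, and restricts to $B(0,2R)$. From there it runs the same derivative-by-derivative embedding and Rellich--Kondrashov argument you describe.

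You instead build a Hestenes-type higher-order reflection by hand:
\begin{itemize}
\item the Vandermonde condition makes $Pf\in C^\ell$ for $f$ smooth up to the flat boundary;
\item boundedness is a change of variables in each reflected piece;
\item Lemma~\ref{lem:halfspace-tools}, together with $L^p$-continuity of the pointwise formula, identifies the continuous extension with the formula on all of $W^{\ell,p}(\mathbb R^n_+)$.
\end{itemize}
Your route is elementary and fully explicit for a flat boundary, which fits the paper's aim of self-contained proofs. The paper's route avoids computation, but cites a theorem far stronger than needed here: a universal extension operator for arbitrary Lipschitz domains.

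One correction: your support set $K^\ast$ is reversed. Since $(Pv)(x',x_n)$ samples $\widetilde v$ at $(x',-x_n/i)$, $Pv$ can be nonzero in $\{x_n<0\}$ only on $\{(y',-iy_n): y\in K,\ 1\le i\le\ell+1\}$. That set is stretched, not compressed, in the normal direction and may leave $B(0,R)$.

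This does not damage the proof. Your chain of inequalities only needs $B$ to be a bounded ball containing $B^+$; take $B\supseteq B(0,R)$, as the paper does with $B(0,2R)$. Containing $K$ would also suffice, since $v$ and its weak derivatives vanish a.e.\ on $B^+\setminus K$. The support of $Pv$ is never used, so the cutoff remark in your last paragraph is unnecessary. If you want the reflected support to stay inside $B(0,R)$, use the nodes $\widetilde v(x',-ix_n)$ with $\sum_i\lambda_i(-i)^j=1$ for $0\le j\le\ell$ instead.
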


\begin{proof}
Let $\widetilde v$ be the zero extension of $v$ to the open half-space supplied by Lemma~\ref{lem:halfball-zero}. Thus
\[
 \widetilde v\in W^{\ell,p}(\mathbb R^n_+),
 \qquad \|\widetilde v\|_{W^{\ell,p}(\mathbb R^n_+)}
 =\|v\|_{W^{\ell,p}(B^+)}.
\]
This step crosses the spherical edge of $B^+$, but does not cross $x_n=0$. To extend through the flat boundary, we use Stein's extension theorem \cite[Theorem~13.17, pp.~424--425]{Leoni2017}. Since the half-space is a Lipschitz graph domain, the theorem supplies a bounded linear operator
\[
 \mathcal E_\ell:W^{\ell,p}(\mathbb R^n_+)\longrightarrow W^{\ell,p}(\mathbb R^n),
 \qquad (\mathcal E_\ell w)|_{\mathbb R^n_+}=w,
\]
with $\|\mathcal E_\ell w\|_{W^{\ell,p}(\mathbb R^n)}\leq C_{\mathcal E}\|w\|_{W^{\ell,p}(\mathbb R^n_+)}$. No zero boundary condition is required.

Set $B:=B(0,2R)$ and $z:=(\mathcal E_\ell\widetilde v)|_B$. Weak derivatives commute with restriction to open subsets, so $z\in W^{\ell,p}(B)$, $z|_{B^+}=v$, and
\[
 \|z\|_{W^{\ell,p}(B)}
\leq\|\mathcal E_\ell\widetilde v\|_{W^{\ell,p}(\mathbb R^n)}\leq C_{\mathcal E}\|\widetilde v\|_{W^{\ell,p}(\mathbb R^n_+)}
 =C_{\mathcal E}\|v\|_{W^{\ell,p}(B^+)}.
\]
Since $B$ has smooth boundary, the Sobolev embeddings on bounded Lipschitz domains apply to the full spaces $W^{\ell,p}(B)$, without boundary conditions.

Put $m:=\ell-k\geq1$. The index hypothesis is equivalent to $\frac1q\geq\frac1p-\frac mn$. For $q\geq p$, the scalar Sobolev embedding theorem \cite[Theorem~1.2.26 and Remark~1.2.27, pp.~39--40]{DrabekMilota2007}, together with H\"older's inequality on $B$ when needed, gives $W^{m,p}(B)\hookrightarrow L^q(B)$ continuously. For $q<p$, the same scalar estimate follows directly from H\"older's inequality, since $B$ has finite measure. For every multi-index $\alpha$ with $|\alpha|\leq k$, we have $D^\alpha z\in W^{m,p}(B)$ because $|\alpha|+m\leq\ell$. Therefore,
\[
 \|D^\alpha z\|_{L^q(B)}
 \leq C\|D^\alpha z\|_{W^{m,p}(B)}
 \leq C\|z\|_{W^{\ell,p}(B)},
 \qquad |\alpha|\leq k.
\]
Summing the $q$-th powers over these finitely many multi-indices gives $z\in W^{k,q}(B)$ and $\|z\|_{W^{k,q}(B)}\leq C_B\|z\|_{W^{\ell,p}(B)}$. Since $D^\alpha(z|_{B^+})=(D^\alpha z)|_{B^+}$, restriction now yields
\[
 \|v\|_{W^{k,q}(B^+)}\leq\|z\|_{W^{k,q}(B)}\leq C_B\|z\|_{W^{\ell,p}(B)}
 \leq C_BC_{\mathcal E}\|v\|_{W^{\ell,p}(B^+)}.
\]

Now assume the strict inequality and let $(v_j)_{j\in\mathbb N}$ be bounded in $W^{\ell,p}(B^+)$, with every $v_j$ vanishing almost everywhere outside the same compact set $K$. Using the same extension operator for every $j$, set $z_j:=(\mathcal E_\ell\widetilde v_j)|_B$. The preceding estimate gives
\[
 \sup_{j\in\mathbb N}\|z_j\|_{W^{\ell,p}(B)}
 \leq C_{\mathcal E}\sup_{j\in\mathbb N}\|v_j\|_{W^{\ell,p}(B^+)}<\infty.
\]
Consequently, for each multi-index $\alpha$ with $|\alpha|\leq k$, the sequence $(D^\alpha z_j)$ is bounded in $W^{m,p}(B)$. The strict condition $\frac1q>\frac1p-\frac mn$ permits the application of \cite[Theorem~1.2.28, p.~40]{DrabekMilota2007}. If $mp<n$, the strict condition is precisely $q<np/(n-mp)$; if $mp=n$, every finite $q$ is allowed. If $mp>n$, part~(iii) gives compactness in $C(\overline B)$ and hence in $L^q(B)$, since $B$ is bounded and $q<\infty$.

Applying this scalar compactness result to the finitely many derivative sequences and successively extracting subsequences, we obtain a common subsequence, still indexed by $j$, for which every $(D^\alpha z_j)$ is Cauchy in $L^q(B)$. Thus
\[
 \|z_j-z_h\|_{W^{k,q}(B)}^q
 =\sum_{|\alpha|\leq k}
   \|D^\alpha z_j-D^\alpha z_h\|_{L^q(B)}^q
 \longrightarrow0
 \qquad\text{as }j,h\to\infty.
\]
By completeness, $z_j\to z_\infty$ in $W^{k,q}(B)$ for some $z_\infty\in W^{k,q}(B)$. Since $z_j|_{B^+}=v_j$, one final restriction gives
\[
 \|v_j-z_\infty|_{B^+}\|_{W^{k,q}(B^+)}
 \leq\|z_j-z_\infty\|_{W^{k,q}(B)}
 \longrightarrow0.
\]
This proves the compactness assertion.
\end{proof}

For a boundary chart in the proofs of Theorems~\ref{thm:sobolev_embedding_bundle} and~\ref{thm:rellich_kondrashov_bundle}, apply this lemma to $v=(\psi_\alpha F_\alpha^a)\circ\phi_\alpha^{-1}$, with $K=\phi_\alpha(\operatorname{supp}\psi_\alpha)$, $\ell=j+m$, and $k=j$. The set $K$ is independent of the section or sequence being considered. Lemma~\ref{lem:meyers-serrin-local} then transfers the component estimates back to the bundle.

\section{Estimates and the Leibniz Rule for Tensor Contractions}\label{app:star-properties}

We prove the two properties of the $*$ notation used in Section~\ref{sec:applications}.

\begin{starproposition}[Estimate on Compact Subsets]\label{prop:star-compact-bound}
Let $(M,g)$ be a Riemannian manifold with or without boundary, and let $G_1,\dots,G_r,H\to M$ be smooth vector bundles of finite rank, equipped with smooth fiber metrics $h_1,\dots,h_r,h_H$. Let
\[
 \Phi:G_1\otimes\cdots\otimes G_r\longrightarrow H
\]
be a smooth bundle homomorphism obtained from the tensor operations in Notation~\ref{notacion estrella}. For every compact subset $K\subseteq M$, there is a constant $C_{K,\Phi}>0$ such that
\[
 |\Phi_x(v_1\otimes\cdots\otimes v_r)|_{h_H}
 \leq C_{K,\Phi}|v_1|_{h_1}\cdots|v_r|_{h_r},
 \qquad x\in K,\quad v_i\in(G_i)_x.
\]
In particular, if $u_i\in\Gamma(G_i)$ and $T=u_1*\cdots*u_r$ is defined by this homomorphism, then
\[
 |T(x)|_{h_H}
 \leq C_{K,\Phi}|u_1(x)|_{h_1}\cdots|u_r(x)|_{h_r},
 \qquad x\in K.
\]
\end{starproposition}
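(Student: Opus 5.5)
The plan is to reduce the estimate to continuity of a fiberwise operator norm on a compact set, in the spirit of the proof of Lemma~\ref{lem:metric_equivalence}. At each $x\in M$, the multilinear map $(v_1,\dots,v_r)\mapsto\Phi_x(v_1\otimes\cdots\otimes v_r)$ is a linear map on the finite-dimensional space $(G_1)_x\otimes\cdots\otimes(G_r)_x$, composed with the tensor product map. Set
\[
 N(x):=\sup\Bigl\{|\Phi_x(v_1\otimes\cdots\otimes v_r)|_{h_H}\ \Bigm|\ v_i\in(G_i)_x,\ |v_i|_{h_i}=1\Bigr\}.
\]
This is a maximum of a continuous function over a product of unit spheres, so it is finite. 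Multilinearity and homogeneity give $|\Phi_x(v_1\otimes\cdots\otimes v_r)|_{h_H}\le N(x)\,|v_1|_{h_1}\cdots|v_r|_{h_r}$; the case where some $v_i=0$ is trivial. It therefore suffices to show that $N$ is bounded on $K$, and then take $C_{K,\Phi}:=\max(1,\sup_K N)$.

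To bound $N$ on $K$, I will work locally. I cover $K$ by finitely many open sets $V_\beta$ with compact closure, each inside an open set over which every $G_i$ and $H$ admits a smooth local frame that is orthonormal with respect to $h_i$, respectively $h_H$. Such frames are obtained by Gram--Schmidt applied to any local frame, and this also works near boundary points using boundary charts. In these frames, $\Phi$ is represented by a finite array of smooth coefficient functions $\Phi^{A}_{a_1\cdots a_r}(x)$. They are smooth because $\Phi$ is built from permutations, contractions, and the smooth tensors $g,g^{-1},h_E,h_E^{-1}$ with constant coefficients; in particular $\Phi$ is a smooth bundle homomorphism, as the statement assumes. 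In orthonormal frames, the fiber norms become Euclidean norms of the component vectors. Cauchy--Schwarz then gives
\[
 N(x)\le\Bigl(\sum_{A,a_1,\dots,a_r}|\Phi^A_{a_1\cdots a_r}(x)|^2\Bigr)^{1/2},
\]
and the right-hand side is continuous, hence bounded on each compact set $\overline{V_\beta}$. Taking the maximum over the finitely many $\beta$ bounds $N$ on $K$. The statement about sections follows by evaluating pointwise at $v_i=u_i(x)$.

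The only step needing care is the smoothness, or at least local boundedness, of the coefficients of $\Phi$ in orthonormal frames. I expect this to be the main, though mild, obstacle. I would handle it by noting that each elementary operation has smooth components in a coordinate frame tensored with a bundle frame. Tensor permutations and contractions with constant coefficients have constant components. Index raising and lowering uses $g_{ij},g^{ij},h_{ab},h^{ab}$, which are smooth. The change to orthonormal frames is given by smooth invertible matrices. Compositions and finite linear combinations then preserve smoothness. Once this is in place, the argument is just continuity on a compact set.
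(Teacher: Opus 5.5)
Your proof is correct and follows essentially the same route as the paper: bound the coefficients of $\Phi$ in local frames on compact closures, then take a finite subcover of $K$. The only difference is that you use $h$-orthonormal frames and Cauchy--Schwarz, which gives a Hilbert--Schmidt bound. This lets you skip the paper's separate steps of comparing $\ell^1$ component norms with the fiber norms $h_i$ and bounding $|\varepsilon_b|_{h_H}$.
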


\begin{proof}
Fix $x\in M$ and consider
\[
 \widetilde\Phi_x:(G_1)_x\times\cdots\times(G_r)_x\longrightarrow H_x,
 \qquad
 \widetilde\Phi_x(v_1,\dots,v_r)=\Phi_x(v_1\otimes\cdots\otimes v_r).
\]
The canonical tensor-product map $\theta_x(v_1,\dots,v_r)=v_1\otimes\cdots\otimes v_r$ is multilinear, and $\Phi_x$ is linear. Thus $\widetilde\Phi_x=\Phi_x\circ\theta_x$ is multilinear. Since the fibers are finite dimensional, it is continuous, so the estimate holds with a constant $C(x)$. It remains to choose these constants uniformly on $K$. If $K$ is empty or one of the bundles has rank zero, the conclusion is immediate; assume otherwise.

For each $p\in K$, choose open sets $U_p,V_p\subseteq M$ such that $p\in U_p$, $\overline{U_p}$ is compact, and $\overline{U_p}\subseteq V_p$, with smooth frames
\[
 e^{(i)}_1,\dots,e^{(i)}_{m_i}\quad\text{for }G_i,
 \qquad
 \varepsilon_1,\dots,\varepsilon_\ell\quad\text{for }H
\]
on $V_p$. Write
\[
 \Phi_x\bigl(e^{(1)}_{a_1}(x)\otimes\cdots\otimes e^{(r)}_{a_r}(x)\bigr)
 =\sum_{b=1}^\ell\Phi^b_{a_1\cdots a_r}(x)\varepsilon_b(x).
\]
The coefficient functions and the frames are smooth on a neighborhood of $\overline{U_p}$. Hence there are constants $A_p,B_p>0$ such that
\[
 |\Phi^b_{a_1\cdots a_r}(x)|\leq A_p,
 \qquad
 |\varepsilon_b(x)|_{h_H}\leq B_p
\]
for all $x\in\overline{U_p}$, $1\leq b\leq\ell$, and $1\leq a_i\leq m_i$ for each $1\leq i\leq r$.

For $x\in\overline{U_p}$, write $v_i=\displaystyle\sum_{a_i=1}^{m_i}v_i^{a_i}e^{(i)}_{a_i}(x)$. Multilinearity gives
\[
 \Phi_x(v_1\otimes\cdots\otimes v_r)
 =\sum_{b=1}^\ell\sum_{a_1=1}^{m_1}\cdots\sum_{a_r=1}^{m_r}
   \Phi^b_{a_1\cdots a_r}(x)v_1^{a_1}\cdots v_r^{a_r}\varepsilon_b(x).
\]
By the triangle inequality,
\begin{align*}
 |\Phi_x(v_1\otimes\cdots\otimes v_r)|_{h_H}
 &\leq A_pB_p\sum_{b=1}^\ell
          \sum_{a_1=1}^{m_1}\cdots\sum_{a_r=1}^{m_r}
          |v_1^{a_1}|\cdots|v_r^{a_r}|\\
 &=A_pB_p\,\ell\prod_{i=1}^r
       \left(\sum_{a_i=1}^{m_i}|v_i^{a_i}|\right).
\end{align*}
The factor $\ell$ comes from the sum over $b$, and the remaining sums factor by distributivity.

The local frame of $G_i$ defines the component norm $\|v_i\|_{(i),x}:=\displaystyle\sum_{a_i=1}^{m_i}|v_i^{a_i}|$. Since the matrix of $h_i$ is smooth and positive definite on a neighborhood of $\overline{U_p}$, compactness gives constants $0<d_{p,i}\leq D_{p,i}<\infty$, independent of $x$, such that
\[
 d_{p,i}|v_i|_{h_i}
 \leq\|v_i\|_{(i),x}
 \leq D_{p,i}|v_i|_{h_i},
 \qquad x\in\overline{U_p}.
\]
Substituting the upper bounds yields
\[
 |\Phi_x(v_1\otimes\cdots\otimes v_r)|_{h_H}
 \leq C_p|v_1|_{h_1}\cdots|v_r|_{h_r},
 \qquad
 C_p:=A_pB_p\,\ell D_{p,1}\cdots D_{p,r}.
\]
Choose a finite subcover $U_{p_1},\dots,U_{p_N}$ of $K$ and set $C_{K,\Phi}=\max\{C_{p_1},\dots,C_{p_N}\}$. This proves the estimate on $K$. The assertion for sections follows by taking $v_i=u_i(x)$.
\end{proof}

\begin{starproposition}[Leibniz Rule for $*$]\label{prop:star-leibniz}
Let $(M,g)$ be a Riemannian manifold with or without boundary, and let $E\to M$ have a fiber metric $h_E$ and a compatible connection. Use the Levi--Civita connection on $TM$ and the induced connections on all tensor products, duals, and homomorphism bundles. Let
\[
 \Phi:G_1\otimes\cdots\otimes G_r\longrightarrow H
\]
be a homomorphism as in Notation~\ref{notacion estrella}. For $u_i\in\Gamma(G_i)$, set $T=\Phi(u_1\otimes\cdots\otimes u_r)=u_1*\cdots*u_r$. Then, for every $X\in\mathfrak X(M)$,
\[
 \nabla_XT
 =\sum_{i=1}^r u_1*\cdots*(\nabla_Xu_i)*\cdots*u_r,
\]
where every summand uses the same fixed homomorphism $\Phi$.
\end{starproposition}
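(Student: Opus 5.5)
The plan is to reduce the Leibniz rule for $T=\Phi(u_1\otimes\cdots\otimes u_r)$ to two facts. First, the induced connection on $G_1\otimes\cdots\otimes G_r$ satisfies the product rule
\[
 \nabla_X(u_1\otimes\cdots\otimes u_r)=\sum_{i=1}^r u_1\otimes\cdots\otimes\nabla_Xu_i\otimes\cdots\otimes u_r .
\]
Second, $\Phi$ is \emph{parallel}: $\nabla_X(\Phi(S))=\Phi(\nabla_XS)$ for every section $S$ of the source bundle. Granting both, linearity of $\Phi$ immediately gives the asserted formula, with the same $\Phi$ appearing in every summand. The product rule is the defining property of the induced tensor-product connection, extended from $r=2$ by a trivial induction on $r$. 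The real work is therefore parallelism.

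Parallelism is preserved under composition and under finite linear combinations with constant real coefficients. Indeed, if $\Phi_1$ and $\Phi_2$ are parallel, then
\[
 \nabla_X(\Phi_2\Phi_1S)=\Phi_2\nabla_X(\Phi_1S)=\Phi_2\Phi_1\nabla_XS,
\]
and constants pass through $\nabla_X$. It therefore suffices to check the three generators in Notation~\ref{notacion estrella}. I would first observe the equivalent formulation: viewing $\Phi$ as a section of $\operatorname{Hom}(G,H)$ with its induced connection, the identity $(\nabla_X\Phi)(S)=\nabla_X(\Phi S)-\Phi(\nabla_XS)$ shows that parallelism means $\nabla\Phi=0$.

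The generators are handled as follows.
\begin{itemize}
\item[(a)] \emph{Permutations of tensor factors.} Applied to decomposable sections, the product rule produces the same terms in permuted order. Decomposable sections span locally over $C^\infty$, and both sides are $\mathbb R$-linear and satisfy the same Leibniz rule in functions, so the identity extends.
\item[(b)] \emph{Contraction $F\otimes F^*\to\mathbb R$.} By the definition of the dual connection, $X(\xi(v))=(\nabla_X\xi)(v)+\xi(\nabla_Xv)$. This is exactly parallelism of the evaluation pairing, tensored with identity maps on the remaining factors, for which (a) applies.
\item[(c)] \emph{Musical identifications by $g$, $h_E$ and their inverses.} The flat map $v\mapsto h(v,\cdot)$ is parallel iff $\nabla h=0$. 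This holds because the Levi--Civita connection is metric and $\nabla^E$ is compatible with $h_E$. The inverses $g^{-1}$ and $h_E^{-1}$ are then parallel as well, since the inverse of a parallel isomorphism is parallel: differentiate $\Phi\Phi^{-1}=\mathrm{Id}$ and use $\nabla\mathrm{Id}=0$.
\end{itemize}
On manifolds with boundary these are pointwise identities between smooth sections, valid up to $\partial M$ by continuity or directly, so no extra care is needed there.

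I expect the main obstacle to be bookkeeping rather than analysis. One must make precise that the connection on a composite bundle such as $T^{(k,l)}\otimes E\otimes E^*$ is the one induced factorwise. One must also check that the operations listed in Notation~\ref{notacion estrella} compose to the same induced connections on intermediate bundles, so that the composition argument is legitimate. I would handle this by fixing, once and for all, that every intermediate bundle carries its induced connection. Step (a) then guarantees that the intermediate identifications commute with $\nabla_X$.
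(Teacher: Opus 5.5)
Your proposal is correct and follows essentially the same route as the paper. Both reduce the rule to the tensor-product Leibniz rule plus parallelism of $\Phi$ (i.e.\ $\nabla\Phi=0$ for the induced connection on $\operatorname{Hom}$), obtained from parallelism of metric identifications, evaluation pairings, permutations and tensor products, together with stability under composition and constant linear combinations; your generator-by-generator checks (the dual-connection identity, $\nabla g=0$ and $\nabla h_E=0$, and differentiating $\Phi\Phi^{-1}=\mathrm{Id}$ for inverses) spell out in more detail what the paper states in one line.
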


\begin{proof}
Regard $\Phi$ as a section of $\operatorname{Hom}(G_1\otimes\cdots\otimes G_r,H)$ with its induced connection. By definition, for every $S\in\Gamma(G_1\otimes\cdots\otimes G_r)$,
\begin{equation}\label{eq:hom-induced-connection}
 (\nabla_X\Phi)(S)
 =\nabla_X\bigl(\Phi(S)\bigr)-\Phi(\nabla_XS).
\end{equation}
A homomorphism is called parallel if $\nabla_X\Phi=0$ for every $X$; equivalently, it commutes with covariant differentiation in this identity.

Metric compatibility makes the metric identifications and their inverses parallel. Evaluation pairings, permutations, and tensor products also commute with the induced connections. Compositions and finite linear combinations with constant coefficients preserve this property. Since $\Phi$ is formed from these operations, it is parallel.

Taking $S=u_1\otimes\cdots\otimes u_r$ in \eqref{eq:hom-induced-connection} and applying the tensor-product rule gives
\begin{align*}
 \nabla_XT
 &=(\nabla_X\Phi)(S)+\Phi(\nabla_XS)\\
 &=\Phi\left(\sum_{i=1}^r
    u_1\otimes\cdots\otimes\nabla_Xu_i\otimes\cdots\otimes u_r\right)\\
 &=\sum_{i=1}^r
    \Phi(u_1\otimes\cdots\otimes\nabla_Xu_i\otimes\cdots\otimes u_r).
\end{align*}
This is the asserted Leibniz rule.
\end{proof}

\section*{Declarations}

\begin{itemize}
\item Funding: The authors did not receive support from any organization for the submitted work.
\item Conflict of interest/Competing interests: There is no conflict of interest related to the work submitted for publication.
\item Ethics approval and consent to participate: All authors have approved and consented to participate in the work submitted for publication.
\item Consent for publication: All authors have approved and consented to submit the work for publication.
\item Data availability: Not applicable.
\item Materials availability: Not applicable.
\item Code availability: Not applicable.
\item Author contribution: All authors contributed to the study conception and design of the work submitted for publication. The first draft of the manuscript was written by Carlos D. Velázquez-Mendoza and all authors commented on previous versions of the manuscript. All authors read and approved the final manuscript.
\end{itemize}

\bibliography{sn-bibliography}

\end{document}